\DeclareMathOperator{\G}{\mathbb{G}} 
\newcommand{\Zz}{\mathbb{Z}^d} 
\newtheorem{theorem}{Theorem}
\newtheorem{lemma}[theorem]{Lemma}
\newtheorem{proposition}[theorem]{Proposition}
\newtheorem{claim}[theorem]{Claim}
\newtheorem{corollary}[theorem]{Corollary}
\newtheorem{definition}{Definition}
\newtheorem{remark}{Remark}
\numberwithin{equation}{section}
\numberwithin{theorem}{section}
\title{Absence of backward infinite paths for first-passage percolation in arbitrary dimension}
\author{Gerandy Brito\footnote{College of Computing, Georgia Institute of Technology, 801 Atlantic Dr., Atlanta, GA 30332.}, Michael Damron\footnote{School of Mathematics, Georgia Institute of Technology, 686 Cherry St., Atlanta, GA 30332.}, and Jack Hanson\footnote{Department of Mathematics, City College of New York, 160 Convent Ave., New York, NY 10031 and the Graduate Center, CUNY, 365 Fifth Avenue New York, NY 10016.}}
\date{}
\begin{document}

\maketitle

\abstract{In first-passage percolation (FPP), one places nonnegative random variables (weights) $(t_e)$ on the edges of a graph and studies the induced weighted graph metric. We consider FPP on $\mathbb{Z}^d$ for $d \geq 2$ and analyze the geometric properties of geodesics, which are optimizing paths for the metric. Specifically, we address the question of existence of bigeodesics, which are doubly-infinite paths whose subpaths are geodesics. It is a famous conjecture originating from a question of Furstenberg and most strongly supported for $d=2$ that for continuously distributed i.i.d.~weights, there a.s.~are no bigeodesics. We provide the first progress on this question in general dimensions under no unproven assumptions. Our main result is that geodesic graphs, introduced in a previous paper of two of the authors, constructed in any deterministic direction a.s.~do not contain doubly-infinite paths. As a consequence, one can construct random graphs of subsequential limits of point-to-hyperplane geodesics which contain no bigeodesics. This gives evidence that bigeodesics, if they exist, cannot be constructed in a translation-invariant manner as limits of point-to-hyperplane geodesics.}

\section{Introduction}


First-passage percolation was introduced by Hammersley-Welsh \cite{HW} in 1965 as a model for fluid flow in a porous medium. Since then, it has been studied in other ways:~as a random growth model, a particle system, or a random metric space (see \cite{ADH17} for a recent survey). It is this last interpretation we take here, and our main focus will be on geometric properties of optimizing paths in this metric. To define the model, let $(t_e)_{e \in \mathcal{E}^d}$ be a collection of nonnegative random variables (weights) indexed by the edge set $\mathcal{E}^d$ of the $d$-dimensional nearest-neighbor lattice $\mathbb{Z}^d$, with $d \geq 2$. The collective distribution of these weights is usually taken to satisfy certain technical conditions, like translation invariance or an i.i.d.~assumption; these will be described in conditions {\bf A} and {\bf B} in Section~\ref{sec: main_results}. A finite (lattice) path from a vertex $x$ to a vertex $y$ is a sequence of alternating vertices and edges $x=v_0, e_0, v_1, \dots, v_{n-1}, e_{n-1}, v_n=y$ such that $e_i$ has endpoints $v_i$ and $v_{i+1}$ for all $i$. Given such a path $\Gamma$, we define its passage time by $T(\Gamma) = \sum_{i=0}^{n-1} t_{e_i}$, and the (first-)passage time from a vertex $x$ to a vertex $y$ is defined as
\[
T(x,y) = \inf_{\Gamma: x \to y} T(\Gamma),
\]
where the infimum is over all lattice paths from $x$ to $y$. 

The function $T:\mathbb{Z}^d \times \mathbb{Z}^d \to [0,\infty)$ is readily seen to be symmetric and to satisfy the triangle inequality, and therefore $(\mathbb{Z}^d, T)$ is a pseudometric space. If there are no zero weights in a particular weight configuration, then $T$ is furthermore a metric.

\subsection{Infinite geodesics in FPP}

Our focus is on geodesics, or optimizing paths in the definition of $T$. For general edge-weights (even ones that are i.i.d.) it is still an interesting open problem to determine exactly for which distributions geodesics exist between any pair of vertices $x,y$, but our conditions will guarantee that $\lim_{n \to \infty} T(0,\mathbb{Z}^d \setminus [-n,n]^d) = \infty$ a.s., and this implies a.s.~existence of geodesics from \cite[Prop.~4.4]{ADH17}. Of particular interest are infinite geodesics:
\begin{definition}
An infinite geodesic is an infinite path whose finite subpaths are geodesics. They come in two varieties:
\begin{enumerate}
\item A unigeodesic (geodesic ray) is indexed by $\mathbb{N}$, and
\item a bigeodesic (geodesic line) is indexed by $\mathbb{Z}$.
\end{enumerate}
\end{definition}

How many infinite geodesics are there? If we define $N$ to be the maximal number of vertex-disjoint unigeodesics, then $N$ is seen to be translation invariant, so under an ergodicity assumption on $(t_e)$, it is a.s.~constant. It is not difficult to show that $N \geq 1$ a.s.: let $\Gamma_n$ be any geodesic from $0$ to $ne_1$. There must be an edge incident to $0$ that infinitely many of the $\Gamma_n$'s take first, and then an edge incident to that one that infinitely many of these $\Gamma_n$'s take second, and so on. In this way, we find a subsequential limit $\Gamma$ which is an infinite geodesic. It is considerably more difficult to prove that $N \geq 2$, and this was done in increasing generality by H\"aggstr\"om-Pemantle \cite{HP}, Garet-Marchand \cite{GM}, and Hoffman \cite{H05}. Hoffman's work introduced Busemann functions to FPP, which he used later in \cite{H08} to improve this bound to $N \geq 2d$. This is the best result currently known, and is far from the prediction of $N = \infty$ in the i.i.d.~case that comes from Newman's methods \cite{N95}, which use unproven assumptions about the limiting shape. (See \cite{BH} for an example of a non-i.i.d.~distribution for which $N < \infty$.)

As for bigeodesics, the following was communicated by Kesten in \cite[p.~258]{aspects}

\medskip
\noindent
{\bf Question} (H. Furstenberg)
Do there exist doubly infinite geodesics through 0?

\medskip
It is widely believed that for $d=2$, a.s.~there are are no bigeodesics when weights $(t_e)$ are i.i.d. and have a common continuous distribution. There are now a few pieces of evidence:
\begin{enumerate}
\item Existence of bigeodesics is equivalent to existence of nonconstant ground states for a disordered ferromagnetic Ising model with couplings distributed as $t_e$. See \cite[Sec.~4.5.2]{ADH17} for discussion and \cite{FLN} for physics arguments against the existence of nonconstant ground states.
\item There are heuristic arguments of C. Newman (see \cite[Sec.~4.5.1]{ADH17}) indicating that in any dimensions in which the ``wandering exponent'' $\xi$ satisfies $\xi>1/2$, a.s.~there are no bigeodesics. This $\xi$ is defined roughly as the number such that the maximal Euclidean distance $D_n$ between any geodesic from $0$ to $ne_1$ and the $e_1$-axis satisfies $\mathbb{E}D_n \sim n^\xi$. One expects $\xi>1/2$ at least in low enough dimensions, and for $d=2$, one expects $\xi=2/3$. (Also see the recent preprint \cite{Alexander} for work on nonexistence of bigeodesics under the unproved condition $\chi >0$, where $\chi$ is the ``fluctuation exponent.'')
\item Two preprints (by Basu-Hoffman-Sly \cite{BHS} and Bal\'azs-Busani-Sepp\"al\"ainen \cite{BBS}) have recently been posted which state results of nonexistence of bigeodesics in a related model, exactly solvable last-passage percolation in two dimensions. In this model, weights are placed on the sites and must be exponentially distributed, one replaces the infimum in the definition of $T$ with supremum, and one considers only oriented paths. Although the model differs from FPP, they are believed to be in the same universality class, and thus these statements should apply to FPP.
\end{enumerate}

\subsection{Progress on bigeodesics and geodesic measures}\label{sec: geodesic_measures_background}

There are few rigorous results toward the bigeodesic conjecture. Although bigeodesics have been completely ruled out in two-dimensional half-planes \cite{WW98}, most other work requires unproven assumptions on the ``limit shape'' for the FPP model. We briefly describe this shape, leaving a more detailed exposition to Section~\ref{sec: tools}. Setting $T(x,y) = T(\lfloor x\rfloor, \lfloor y \rfloor)$ for $x,y \in \mathbb{R}^d$, where, for example, $\lfloor x \rfloor = \sum_{i=1}^d \lfloor x \cdot e_i\rfloor e_i$, one can show that for any fixed $x \in \mathbb{R}^d$, 
\[
g(x) := \lim_{n \to \infty} \frac{T(0,nx)}{n} \text{ exists in probability}
\]
under mild assumptions on the common distribution of $(t_e)$. The resulting function $g: \mathbb{R}^d \to \mathbb{R}$ can be shown to be a deterministic norm, so long as there are not too many low-weight edges. The set
\[
\mathcal{B} = \{x \in \mathbb{R}^d : g(x) \leq 1\}
\]
is called the limit shape for the model. It is easily seen to be convex and compact, with the symmetries of $\mathbb{Z}^d$ that fix the origin. Although $\mathcal{B}$ is expected to be strictly convex and to have uniformly positive curvature, these properties (or even much weaker ones) have not been demonstrated for any i.i.d.~distribution of $(t_e)$. See \cite[Ch.~2]{ADH17} for more details.

In \cite{LN}, Licea-Newman ruled out the existence of bigeodesics with certain given directions. They showed that in dimension $d=2$, for i.i.d.~continuously distributed $(t_e)$, there is a set $D \subset [0,2\pi)$ whose complement has Lebesgue measure zero (this was improved to countable complement by Zerner \cite{Newmanbook}) such that
\begin{equation}\label{eq: good_directions}
\mathbb{P}(\text{there is a bigeodesic with directions } \theta_1,\theta_2) = 0 \text{ for any } \theta_1,\theta_2 \in D.
\end{equation}
Here, a path with vertices $(x_n)$ has direction $\theta$ if $\arg x_n \to \theta$ as $n \to \infty$, and the directions of a bigeodesic refer to the directions of both of its ends. Although this result requires no unproven assumptions, it only gives information if bigeodesics are known to have directions. This latter claim was shown by Newman \cite{N95} under the assumption that the boundary of $\mathcal{B}$ has uniformly positive curvature, and an exponential moment assumption on the i.i.d.~weights $(t_e)$. In \cite{DH17}, Damron-Hanson improved \eqref{eq: good_directions} under an unproven assumption of (local) differentiability of the boundary of $\mathcal{B}$, showing that in dimension $d=2$,
\begin{equation}\label{eq: DH_bigeo}
\mathbb{P}(\text{there is a bigeodesic with an end in direction }\theta) = 0 \text{ for any } \theta \in [0,2\pi).
\end{equation}
A preprint by Ahlberg-Hoffman \cite{AH} states a result that improves the differentiability assumption of \eqref{eq: DH_bigeo}. Both this result and \eqref{eq: good_directions} are only valid for fixed directions, whereas the bigeodesic conjecture is supposed to hold for all $\theta$ simultaneously.

Without unproven assumptions, there is currently only one technique for analyzing bigeodesics, and it is through measures on geodesics. This idea originated in the geodesic graph measures of Damron-Hanson \cite{DH14}. We summarize these measures here and leave the complete construction to Section~\ref{sec: construction}. Let $H$ be any hyperplane through the origin with a normal vector (direction) $\rho$ and define the shifted hyperplane $H_\alpha = H + \alpha \rho$ for $\alpha \in \mathbb{R}$. We would like to take a.s.~limits of geodesics from points to the hyperplanes $H_\alpha$ as $\alpha \to \infty$, but since we do not know these limits exist, we take weak limits. For $x \in \mathbb{Z}^d$, let $\Gamma_x(\alpha)$ be the geodesic from $x$ to $H_\alpha$ (optimizing path for $T(x,H_\alpha)$, defined in the obvious way) and let $\mathbb{G}_\alpha$ be the graph induced by the union of all geodesics in the collection $(\Gamma_x(\alpha) : x \in \mathbb{Z}^d)$. Because the $\Gamma_x(\alpha)$'s coalesce if they touch, every directed path in the graph $\mathbb{G}_\alpha$ is equal to a $\Gamma_x(\alpha)$. This graph is thought of as a random element of a space of directed graphs (the geodesics are directed in the direction they are traveled). Because we want any limit of these graphs to be translation invariant, we average their distributions: for any $n \geq 0$, let $\mathbb{G}_n^\ast$ be the graph obtained on an enlarged space by first sampling an independent uniform random variable $U_n$ on $[0,n]$ and then setting $\mathbb{G}_n^\ast = \mathbb{G}_{U_n}$. These graphs are easily seen to be tight, so we can let
\[
\mathbb{G} = \lim_{k \to \infty} \mathbb{G}^\ast_{n_k} \text{ for some subsequence }(n_k),
\]
where the convergence is in distribution. Any such limit $\mathbb{G}$ is called a geodesic graph. In this language, \cite[Thm.~6.9]{DH14} states:
\begin{theorem}
Let $\mathbb{G}$ be a geodesic graph in dimension $d=2$ constructed in any direction $\rho$. A.s., $\mathbb{G}$ has no doubly-infinite directed paths.
\end{theorem}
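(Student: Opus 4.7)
The plan is a proof by contradiction. Suppose that with positive probability $\mathbb{G}$ contains a doubly-infinite directed path. Because the law of $\mathbb{G}$ is translation invariant by the $U_n$-averaging in its construction, the random set
\[
A = \{ v \in \mathbb{Z}^2 : v \text{ lies on a doubly-infinite directed path of } \mathbb{G} \}
\]
is translation invariant, and under the contradiction hypothesis has positive density $p > 0$.

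First I would establish the forest structure of $\mathbb{G}$. For continuous i.i.d.\ weights, finite geodesics are a.s.\ unique, so each vertex of $\mathbb{Z}^2$ has out-degree exactly $1$ in every $\mathbb{G}_\alpha$; this property survives weak convergence, so with probability one each vertex has out-degree exactly $1$ in $\mathbb{G}$. Moreover, the forward directed path from every vertex is a.s.\ infinite, because in $\mathbb{G}_\alpha$ it terminates on $H_\alpha$ and its length diverges as $\alpha \to \infty$. Consequently $\mathbb{G}$ is a disjoint union of in-trees whose forward rays escape to infinity, and the event $\{ v \in A \}$ is equivalent to the existence of an infinite chain of ancestors of $v$; in particular each component of $\mathbb{G}$ has a canonical forward end, and $A \neq \emptyset$ iff some component has a second (backward) end.

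Next I would exploit the planarity of $\mathbb{Z}^2$ together with the directional data inherited from the construction. Since each $\mathbb{G}_\alpha$ is built from geodesics ending on $H_\alpha$, every forward ray in $\mathbb{G}$ has its $\rho$-coordinate drifting to $+\infty$; thus bi-infinite directed paths are monotone in the $\rho$-direction, with $\rho$-coordinate ranging from $-\infty$ to $+\infty$. Combined with the out-degree-$1$ forest structure, any two bi-infinite paths are either entirely disjoint or eventually coincide (paths only coalesce, never split), and by planarity the disjoint ones admit a natural linear order by their transverse $\rho^\perp$-coordinates.

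The final and most delicate step is a Burton-Keane style counting argument using the density of $A$. On a large transverse box $Q_L$, translation invariance gives $\mathbb{E}\, |A \cap Q_L| = p\, |Q_L| = \Theta(L^2)$, while planarity plus monotonicity restricts the set of distinct bi-infinite directed paths meeting $Q_L$ to a linearly ordered family of size $O(L)$. Reconciling these counts requires a finer accounting: forcing many distinct backward rays to coexist produces positive-density ``backward trifurcation'' vertices in $\mathbb{G}$ (in-degree-$3$ vertices each of whose three incoming branches extends backward to infinity), and a standard translation-invariance and planar-topology argument shows such vertices cannot have positive density. The main obstacle is controlling the transverse spread of the backward rays without any curvature assumption on the limit shape $\mathcal{B}$; this is accomplished using the Busemann-like increments implicit in the point-to-hyperplane construction of $\mathbb{G}$, which provides just enough rigidity to run the trifurcation count, and it is precisely this step that is specific to $d=2$ and motivates the more elaborate arguments needed in general dimension.
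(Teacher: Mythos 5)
Your outline has two genuine gaps, and they sit exactly where the real work lies. First, the directional claims about bi-infinite paths are unjustified. What is known (and what the paper proves via Proposition~\ref{prop: directedness} and Lemma~\ref{lem: hat_rho_definition}) is that the \emph{forward} ray $\Gamma_x$ is asymptotically directed in $S_\varrho$ and therefore crosses suitable hyperplanes $H_{\hat\varrho}(\alpha)$ only finitely often; nothing of the sort is known for the backward end of a doubly-infinite path, and geodesics are certainly not monotone in the $\rho$-coordinate. Your assertion that bi-infinite directed paths have $\rho$-coordinate ranging monotonically from $-\infty$ to $+\infty$, and hence that the disjoint ones are linearly ordered by a transverse coordinate, has no proof and is precisely the kind of control one cannot assume; indeed the paper must work hard (Lemma~\ref{lem: finite_intersection_component}, proved by a mass-transport argument, not by planarity or directedness) just to show that a component containing a backbone meets each hyperplane in a bounded set.

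Second, the counting step does not produce a contradiction as stated. Positive density of the set $A$ of vertices lying on doubly-infinite paths does \emph{not} force many distinct bi-infinite paths in a box: a single backbone per component, with finite trees attached, can carry all of that density while containing no ``backward trifurcation'' at all -- and this backbone-plus-finite-trees structure is exactly what both proofs must rule out by other means, not what they can use to manufacture in-degree-three branch points. (Burton--Keane does enter the actual arguments, but only to show that the set of vertices with infinite backward cluster in one component is a \emph{single} directed line, Lemma~\ref{lem: backbone}; it cannot by itself kill that single line.) Your sentence that the transverse spread of backward rays is controlled ``using the Busemann-like increments, which provide just enough rigidity'' is a placeholder for the missing core: in the $d=2$ result of Damron--Hanson this core is the Licea--Newman trapping argument giving coalescence (one component), after which a bi-infinite path does force encounter points; in the present paper it is the bounded-intersection lemma combined with the edge-modification argument that severs the backbone and the progenitor mass-transport contradiction. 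Without one of these mechanisms (or a genuine substitute), the proposal does not close.
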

Note that any directed path in a geodesic graph $\mathbb{G}$ is a geodesic, so the theorem states that geodesic graphs do not support bigeodesics. The above result holds for all directions, like \eqref{eq: DH_bigeo}, but does not require any limit shape assumptions. It was shown only for $d=2$ because the proof involves first showing that $\mathbb{G}$ has one component a.s., and this requires trapping (planarity) arguments adapted from \cite{LN}.

\subsection{Main results}\label{sec: main_results}

In this section we state the main result of the paper and some of its consequences. We will use the geodesic graph terminology from the previous section; the complete result will appear in Theorem~\ref{thm: main_thm_restated} after the construction of geodesic graphs. Our theorems are proved under two different types of assumptions on the distribution of $(t_e)$:

\begin{enumerate} 
\item[{\bf A:}] The variables $(t_e)_{e \in \mathcal{E}^d}$ are i.i.d., satisfying the moment condition of Cox-Durrett \cite{CD}: if $e_1, \dots, e_{2d}$ are the $2d$ edges incident to the origin, then
\begin{equation}\label{eq: CD_moment}
\mathbb{E}\left[ \min_{i=1, \dots, 2d} t_{e_i} \right]^d < \infty.
\end{equation}
Furthermore, the common distribution of $(t_e)$ is continuous.
\item[{\bf B:}] The variables $(t_e)$ satisfy the conditions of Hoffman \cite{H08}:
\begin{enumerate}
\item their distribution is ergodic with respect to translations of $\mathbb{Z}^d$,
\item their distribution has all the symmetries of $\mathbb{Z}^d$,
\item $\mathbb{E}t_e^{d+\epsilon}<\infty$ for some $\epsilon>0$, and
\item the limit shape $\mathcal{B}$ is bounded.
\end{enumerate}
Furthermore, the distribution has unique passage times: for any two paths $\gamma,\gamma'$ with distinct edge sets, $\mathbb{P}(T(\gamma) = T(\gamma')) = 0$. Last, the distribution has the upward finite energy property from \cite{DH14}: for each $\lambda > 0$ such that $\mathbb{P}(t_e \geq \lambda) > 0$, one has
\begin{equation}\label{eq: upward_finite_energy}
\mathbb{P}(t_e \geq \lambda \mid (t_f)_{f \neq e}) > 0 \text{ a.s.}
\end{equation}
\end{enumerate}

Under these conditions, we have the main result:
\begin{theorem}\label{thm: main_thm}
Assume {\bf A} or {\bf B} and let $\mathbb{G}$ be a geodesic graph in dimension $d \geq 2$ constructed in any direction $\rho$. A.s., $\mathbb{G}$ has no doubly-infinite directed paths.
\end{theorem}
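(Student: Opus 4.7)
The plan is to argue by contradiction: assume that with positive probability $\mathbb{G}$ contains a doubly-infinite directed path, and rule this out by a Burton-Keane style counting argument in which a finite-energy surgery on the edge weights plays the role that planar trapping played in the two-dimensional argument of \cite{DH14}.

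Since every vertex of $\mathbb{G}$ has out-degree exactly $1$, a doubly-infinite directed path through $v$ is present iff the backward tree $T_v := \{u : \text{the forward ray of } u \text{ in } \mathbb{G} \text{ passes through } v\}$ is infinite. By translation invariance of the geodesic-graph measure, $p := \mathbb{P}(|T_0| = \infty)$ is independent of the base vertex; assume $p > 0$. Attach to $\mathbb{G}$ its associated (averaged) Busemann-like function $B$, which decreases along each forward edge by the corresponding edge weight. Under {\bf A} the Cox-Durrett shape theorem, and under {\bf B} the corresponding ergodic shape theorem, give $B$ linear asymptotics in direction $\rho$, so along any backward infinite ray $B$ must grow without bound at linear speed, forcing the ray to escape to infinity at linear Euclidean speed in a half-space dual to $\rho$.

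Call $v$ a \emph{branch} if $v$ has two distinct in-neighbors in $\mathbb{G}$, each supporting its own infinite backward ray. A mass-transport argument reduces the problem to showing that branches have positive density as soon as $p > 0$. The main input is a finite-energy surgery: under {\bf B} the upward finite-energy hypothesis \eqref{eq: upward_finite_energy}, and under {\bf A} the resampling of a single edge weight (using continuity to avoid ties). Given a backward infinite ray through a vertex $v$, the surgery modifies the weights in a bounded region so as to drive a second in-geodesic into $v$, without destroying the backward ray outside the surgery region. Positive density of branches in hand, the usual Burton-Keane counting -- each branch in a large box $B_n$ is charged, via its two disjoint outgoing backward rays, to a distinct element of $\partial B_n$ -- gives $|\{\text{branches in } B_n\}| = O(|\partial B_n|) = o(|B_n|)$, the desired contradiction.

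The main technical obstacle is the surgery step. In $d=2$, Licea-Newman trapping \cite{LN} forces forward rays to coalesce and reduces the problem to finiteness of a single backward tree; in $d \geq 3$ no such planar mechanism is available, so branchings must be produced directly from individual backward infinite rays. Designing a finite-range modification whose effect really is to create a new branch (and not merely to collapse an existing ray), and then verifying that the modified configurations are still realized with positive density by the averaged measure defining $\mathbb{G}$, is where most of the technical work will concentrate.
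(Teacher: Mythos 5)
Your outline reduces everything to one step—the ``finite-energy surgery'' that, starting from a single backward-infinite ray, creates a branch vertex $v$ with two in-neighbors each carrying an infinite backward ray—and that step is precisely where the argument breaks down; it is a missing idea, not deferred technical work. First, a branch requires \emph{two} disjoint backward-infinite rays funneled into $v$, and your hypothesis only supplies one; a bounded modification cannot conjure a second one, since whether the backward cluster of the new in-neighbor is infinite depends on the forward geodesics of infinitely many distant vertices, which no finite-range surgery controls. Even granting a second backbone somewhere, nothing guarantees two distinct backbones ever come within a bounded distance of each other (positive density of backbone vertices does not imply this, as one doubly-infinite path can supply all of them). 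Second, the only modification available under \textbf{B} is the \emph{upward} finite-energy property \eqref{eq: upward_finite_energy}: raising weights can block or sever geodesics but cannot attract distant geodesics into a prescribed vertex, which is what ``driving a second in-geodesic into $v$'' requires; under \textbf{A} resampling one edge has the same limitation. Third, $\mathbb{G}$ is not a function of the weights: it is a subsequential weak limit, so any modification argument has to pass through an approximation scheme like the $e_0$-approximability of Theorem~\ref{thm: general_modification}, which only handles events that are \emph{stable} under raising a weight (the retained geodesics must not change). An event asserting the creation of new in-edges at $v$ with infinite backward trees attached is exactly the kind of event this machinery cannot certify. In effect, your surgery step is a disguised form of forcing two infinite geodesic structures to merge, which is the coalescence-type statement that is open in $d\ge 3$ and that the paper explicitly cannot prove (Remark~\ref{rem: coalesce}).

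For contrast, the paper uses Burton--Keane only negatively and locally: in Lemma~\ref{lem: backbone} it shows a component with an infinite backward cluster has a \emph{single} doubly-infinite backbone (no encounter points), i.e. exactly the opposite of the positive density of branches you want to manufacture. The actual contradiction comes from a different mechanism: a mass-transport argument (Lemma~\ref{lem: finite_intersection_component}) shows a component with a backbone meets the hyperplanes $H_{\hat\varrho}(n)$ in bounded sets; then an upward modification on a finite edge set in a strip (events $\mathsf{A}_2,\mathsf{A}_2'$, inequality \eqref{eq: prefinal}) \emph{severs} the backbone—destroying structure, which is what raising weights can do—producing with positive probability a component confined to one side of $H_{\hat\varrho}(0)$ with bounded slab intersections, hence possessing a unique ``progenitor''; a second mass-transport argument shows progenitors cannot exist. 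If you want to salvage your route, you would need an argument producing two nearby disjoint backward-infinite rays and a mechanism (necessarily not upward modification alone) forcing them to merge, and at that point you would essentially be proving coalescence in general dimension.
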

The formal statement appears in Theorem~\ref{thm: main_thm_restated}, after we have given the full construction of geodesic graphs.

As a consequence of Theorem~\ref{thm: main_thm}, we can build subsequential limiting graphs which do not contain doubly-infinite paths. The construction follows \cite[Thm.~1.10]{DH14} (see Section~7.3 there) in a straightforward manner, so we omit the proof.
\begin{corollary}\label{cor: subsequential_limits}
Assume {\bf A} or {\bf B}. Let $H$ be a hyperplane through the origin with normal vector $\rho$ and let $H_\alpha = H+\alpha\rho$ for $\alpha \in \mathbb{R}$. There exists an event $\mathcal{A}$ with $\mathbb{P}(\mathcal{A})=1$ such that for $\omega \in \mathcal{A}$, the following holds. There exists a ($\omega$-dependent) increasing sequence $(\alpha_k)$ of real numbers with $\alpha_k \to \infty$ such that $\mathbb{G}_{\alpha_k}(\omega) \to G(\omega)$, a directed graph with the following properties.
\begin{enumerate}
\item Viewed as an undirected graph, $G$ has no circuits.
\item Each $x \in \mathbb{Z}^d$ has out-degree 1 in $G$.
\item For all $x \in \mathbb{Z}^d$, the set $\{y \in \mathbb{Z}^d : y \to x \text{ in } G\}$ is finite.
\end{enumerate}
\end{corollary}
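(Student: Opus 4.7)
I would combine a compactness / diagonal-extraction argument for properties (1)--(2) with Theorem~\ref{thm: main_thm} via a Skorokhod coupling for property (3).

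\emph{Compactness and properties (1)--(2).} On the probability-one event that all point-to-hyperplane geodesics are unique, each $\mathbb{G}_\alpha(\omega)$ is a directed forest: every vertex $x$ with $x\cdot\rho<\alpha$ has out-degree exactly one (the first edge of the unique geodesic $\Gamma_x(\alpha)$), coalescence of geodesics rules out directed cycles, and since every edge is oriented away from exactly one of its endpoints, there are no undirected circuits either. The state space of directed graphs on $\mathbb{Z}^d$ embeds in the compact product space $\{0,1\}^{\vec{\mathcal{E}}^d}$, and each vertex has only $2d$ candidate out-edges, so by a Cantor diagonal extraction any increasing $\alpha_k\to\infty$ admits a further subsequence along which $\mathbb{G}_{\alpha_k}(\omega)$ converges edge-by-edge to some $G(\omega)$ with a stabilized out-edge at every vertex; this gives out-degree one in the limit. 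Any undirected circuit in $G$ is a finite configuration which would appear in $\mathbb{G}_{\alpha_k}$ for all large $k$, contradicting the forest property, so (1) and (2) hold for $G(\omega)$ along such a subsequence.

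\emph{Property (3) via Theorem~\ref{thm: main_thm}.} Given out-degree one everywhere, local finiteness, and no undirected circuits, an infinite in-tree at some vertex $x$ would produce, via K\"onig's lemma, an infinite backward path ending at $x$; concatenated with the infinite forward orbit of $x$ this would yield a doubly-infinite directed path through $x$. Hence it suffices to exhibit an almost-sure subsequential limit with no doubly-infinite directed paths. I would obtain this by applying Skorokhod's representation on the Polish space of (weight, directed graph) pairs. Along the subsequence $(n_k)$ defining $\mathbb{G}$, the joint distribution of $(t,\mathbb{G}^\ast_{n_k})$ is tight, so after refining it converges jointly in distribution to some $(t,\mathbb{G})$; Skorokhod realizes this as an almost-sure convergence on an enriched probability space, and on that space $\mathbb{G}^\ast_{n_k}$ can be coupled as $\mathbb{G}_{\tilde{U}_{n_k}}(\tilde{t})$ with $\tilde{U}_{n_k}$ uniform on $[0,n_k]$ independent of $\tilde{t}$, and with $\tilde{U}_{n_k}\to\infty$ a.s.\ after a further subsequence. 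Setting $\alpha_k:=\tilde{U}_{n_k}$ gives an increasing sequence to infinity along which $\mathbb{G}_{\alpha_k}(\tilde{t})\to\tilde{\mathbb{G}}\stackrel{d}{=}\mathbb{G}$; Theorem~\ref{thm: main_thm} then guarantees that the limit has no doubly-infinite directed paths, giving (3). Finally, the event ``there exists an increasing $\alpha_k\to\infty$ with $\mathbb{G}_{\alpha_k}(t)$ converging to a graph satisfying (1)--(3)'' is $\sigma(t)$-measurable and has full measure on the enriched space, hence full measure on the original space since the marginals of the weights agree.

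\emph{Main obstacle.} The main technical care is in the Skorokhod step: one must arrange the joint coupling of edge weights and uniform selectors so that the almost-sure limit is genuinely realized as a pointwise limit of $\mathbb{G}_{\alpha_k}(t)$ for an honest $\omega$-dependent real-valued sequence $\alpha_k\to\infty$, together with a projection / measurable-selection argument to push the resulting full-measure event back to the original probability space. Once this is in hand, the remaining steps are immediate from the forest structure and local finiteness of the $\mathbb{G}_\alpha$.
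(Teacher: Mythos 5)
Your reduction of property (3) to Theorem~\ref{thm: main_thm} (K\"onig's lemma plus concatenation with the forward path) and your compactness argument for properties (1)--(2) are fine. The problem is the transfer step, which is exactly where the content of the corollary lies, and the Skorokhod coupling does not deliver it. Applying Skorokhod's theorem to $(t,\mathbb{G}^\ast_{n_k})$ produces random elements $(\tilde t^{(k)},\tilde G^{(k)})$ converging almost surely, with $(\tilde t^{(k)},\tilde G^{(k)})$ distributed as $(t,\mathbb{G}^\ast_{n_k})$; after a standard transfer argument one may indeed write $\tilde G^{(k)}=\mathbb{G}_{\tilde U_{n_k}}(\tilde t^{(k)})$ with $\tilde U_{n_k}$ uniform on $[0,n_k]$ and independent of $\tilde t^{(k)}$. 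But the weight coordinate is a \emph{different} random configuration for each $k$: nothing in the construction lets you write $\tilde G^{(k)}=\mathbb{G}_{\tilde U_{n_k}}(\tilde t)$ with the single limiting configuration $\tilde t$, as your proposal asserts. Since $(t_e)\mapsto\mathbb{G}_\alpha((t_e))$ is not continuous --- convergence in the product topology controls only finitely many edges, while a geodesic to a distant hyperplane depends on infinitely many, and geodesics are unstable under small perturbations --- you cannot replace $\tilde t^{(k)}$ by $\tilde t$ in the limit. Consequently the almost-sure limit $\tilde{\mathbb{G}}$, which does have finite backward clusters because it is distributed as the graph coordinate under $\mu$ (Theorem~\ref{thm: main_thm_restated}), is not known to arise as a limit of $\mathbb{G}_{\alpha_k}(\tilde t)$ for any sequence $\alpha_k\to\infty$, and the closing step (``the event is $\sigma(t)$-measurable and has full measure on the enriched space'') has nothing behind it: the construction gives no information about graphs built from $\tilde t$ itself. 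You flag this as the ``main obstacle,'' but it is not a technical refinement of your argument; it is the whole difficulty, and Skorokhod is the wrong tool because it scrambles the weights across $k$.

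For comparison, the paper omits the proof and defers to the construction in \cite{DH14} (Theorem~1.10 and Section~7.3 there), which avoids couplings altogether by exploiting that the $\Omega_1$-marginal of every $\mu_n^\ast$ is \emph{exactly} $\mathbb{P}$, not merely convergent to it. Roughly, one disintegrates $\mu$ over the weight coordinate and shows that for $\mathbb{P}$-a.e.\ $\omega$ the conditional law of the graph coordinate is carried by the (closed, nonempty) set of subsequential limits of $\alpha\mapsto\mathbb{G}_\alpha(\omega)$ as $\alpha\to\infty$; this uses that bounded ranges of $\alpha$ carry vanishing weight in the Ces\`aro averages defining $\mu_n^\ast$, together with approximation by graph-cylinder events in the spirit of the appendix. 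Combining this with Theorem~\ref{thm: main_thm_restated} via Fubini, for a.e.\ $\omega$ the conditional measure gives full mass both to that limit set and to the set of graphs satisfying (1)--(3), so their intersection is nonempty, which is the corollary; note this also dissolves your measurable-selection worry, since one only needs nonemptiness of an intersection of two sets of conditional probability one. If you want a self-contained proof, this conditioning argument is the piece you must supply in place of the Skorokhod step.
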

In the above statements, $y \to x$ means that there is a directed path from $y$ to $x$. Furthermore, a sequence of directed graphs $(G_n)$ converges to a directed graph $G$ (written $G_n \to G$) if for any given $k$, the subgraph of $G_n$ induced by its intersection with $[-k,k]^d$ is equal to the subgraph of $G$ induced by its intersection with $[-k,k]^d$, for all large $n$.

\begin{remark}\label{rem: coalesce}
It is an important question to determine whether geodesic graphs $\mathbb{G}$ in dimension $d \geq 3$ have only one component (coalescence of geodesics). We cannot prove (or disprove) this at the present time, although it is known for $d=2$ (in \cite[Thm.~6.1]{DH14}). The arguments there use planarity in the form of the trapping argument from \cite{LN}.
\end{remark}

\begin{remark}
If the graphs $\mathbb{G}_\alpha$ have an a.s.~limit as $\alpha \to \infty$ (limiting geodesics exist from points to hyperplanes), the methods of this paper can be used to prove that this limit $G$ satisfies the properties of Corollary~\ref{cor: subsequential_limits}. Such limits have been shown to exist in dimension $d=2$ under differentiability assumptions on $\mathcal{B}$ (see \cite{DH17}), and on half-planes in dimension $d=2$ without any unproven assumptions in \cite{ADH15}. Paraphrased, bigeodesics cannot be constructed in a natural way as limits of point-to-hyperplane geodesics.
\end{remark}

\begin{remark}
To prove Theorem~\ref{thm: main_thm}, we use an ``edge-modification'' argument which involves technical issues analogous to those from \cite{DH14}. Because of the added complexity of the proofs in this paper, we have developed an abstract modification procedure in Theorem~\ref{thm: general_modification} in the appendix. We believe that this theorem could be useful for applications in similar settings where one constructs distributional limits of random graphs (for example in the ``metastates'' of \cite{AW, NS} or measures like those of \cite{AH}).
\end{remark}

As stated in Remark~\ref{rem: coalesce}, we do not know how many components are in geodesic graphs in dimensions $d \geq 3$. For this reason, the arguments of \cite{DH14, LN} do not work in our setting. Briefly, the argument of \cite{DH14} for $d=2$ consists of first showing that geodesic graphs have one component (coalescence) and then concluding that any geodesic graph with a doubly-infinite directed path must have several, meeting at so-called encounter points. A well-known result of Burton-Keane \cite{BK89} then rules out existence of encounter points, and therefore doubly-infinite paths. Because the coalescence argument involves identifying a region between two parallel geodesics and modifying edges in this region, we cannot apply it (one cannot define a ``protected'' region in $\mathbb{R}^d$ for $d \geq 3$ whose boundary consists mostly of geodesics that cannot be crossed by other geodesics). We need a different approach, relying on the mass-transport principle, and proving that components that have doubly-infinite paths must intersect certain hyperplanes in bounded sets. (See Lemma~\ref{lem: finite_intersection_component}.) This crucial property allows us to run a modification argument that severs doubly-infinite paths, and the resulting components can be seen to have properties that violate translation invariance. Our overall approach shows that any component of a geodesic graph must intersect these hyperplanes in unbounded sets. In this way, we can understand the trapping argument of \cite{LN}: in two dimensions, if coalescence does not occur, then planarity implies the bounded intersection property, which gives a contradiction.

\subsection{Organization of the paper}
The remainder of the paper is organized as follows. Section~\ref{sec: formal} describes the setting and definition of geodesic measures, along with properties of the different coordinates (Busemann functions and geodesic graphs) on our enlarged space in the construction. In Section~\ref{sec: main_result}, we formally state the main result (Theorem~\ref{thm: main_thm_restated}) for geodesic graphs and give a sketch of the (delicate) arguments involved in the proof. Section~\ref{sec: proof} is devoted to the proof of Theorem~\ref{thm: main_thm_restated}, and begins in Section~\ref{sec: lemmas} with preliminary lemmas about the structure and sizes of components which have doubly-infinite paths (if they were to exist). Section~\ref{sec: absence} covers the full proof of Theorem~\ref{thm: main_thm_restated} in the case that the edge-weights are a.s.~bounded, and in Section~\ref{sec: unbounded_case}, we briefly sketch the modifications needed in the (simpler) case that the edge-weights are unbounded.

\section{Formal definitions and tools for the proof}\label{sec: formal}

\subsection{Setting}\label{sec: tools}

Here we formalize the model and state various standard results that will be used in the proof. First, we take as our probability space $\Omega = [0,\infty)^{\mathcal{E}^d}$ with the product Borel sigma-algebra, and let $(t_e) = (t_e)_{e \in \mathcal{E}^d}$ be the coordinate variables. The passage times $T(x,y)$ are defined as in the introduction for $x,y \in \mathbb{Z}^d$ and are extended to real points $x,y \in \mathbb{R}^d$ as in Section~\ref{sec: geodesic_measures_background}. For $A,B \subset \mathbb{R}^d$, the passage time $T(A,B)$ is defined as the infimum of $T(x,y)$ over all $x \in A$, $y \in B$. The measure $\mathbb{P}$ on $\Omega$ will be such that either {\bf A} or {\bf B} hold for the $t_e$'s. Under either of these assumptions, the shape theorem holds: there exists a deterministic, convex, compact set $\mathcal{B}$ in $\mathbb{R}^d$ such that for each $\epsilon>0$
\[
\mathbb{P}\left( (1-\epsilon) \mathcal{B} \subset \frac{B(t)}{t} \subset (1+\epsilon) \mathcal{B} \text{ for all large }t\right) = 1,
\]
where $B(t) = \{x \in \mathbb{R}^d : T(0,x) \leq t\}$. Furthermore, $\mathcal{B}$ has non-empty interior and has the symmetries of $\mathbb{Z}^d$ that fix the origin. This result was proved in \cite{CD} under assumption {\bf A} and in \cite{Boivin} under {\bf B}.

The set $\mathcal{B}$ is the unit ball of a norm $g$ which satisfies
\[
g(x) = \lim_{n \to \infty} \frac{T(0,nx)}{n} \text{ a.s. and in }L^1 \text{ for fixed } x \in \mathbb{R}^d,
\]
and we can state the shape theorem in the following equivalent form:
\begin{equation}\label{eq: shape_theorem}
\limsup_{\|x\|_1 \to \infty} \frac{|T(0,x)-g(x)|}{\|x\|_1} = 0 \text{ a.s.}
\end{equation}
Note that for $x \in \mathbb{Z}^d$, we have $g(x) \leq \lim_{n \to \infty} T(\Gamma_{nx})/n$, where $\Gamma_{nx}$ is a deterministic path from $0$ to $nx$ with $\|nx\|_1$ many edges. By the law of large numbers, this limit is a.s. equal to $\|x\|_1\mathbb{E}t_e$. Therefore we obtain the simple bound
\begin{equation}\label{eq: g_bound}
\text{for } x \in \mathbb{Z}^d, \text{ one has } g(x) \leq \|x\|_1 \mathbb{E}t_e.
\end{equation}

It is elementary to check (see \cite[Prop.~4.4(a)]{ADH17}) that for any configuration $(t_e)$ in which $\lim_{n \to \infty} T(0,\mathbb{Z}^d \setminus [-n,n]^d) = \infty$, a geodesic exists between any two vertices $x,y$. This limit condition holds a.s.~under {\bf A} or {\bf B}, and this follows immediately from the shape theorem above, since the limit shape $\mathcal{B}$ is bounded. By continuity of the common distribution of $(t_e)$ under {\bf A} or by uniqueness of passage times under {\bf B}, the geodesic between $x$ and $y$ is a.s.~unique. Similar statements can be checked in a straightforward manner for point-to-hyperplane geodesics: given any $x \in \mathbb{Z}^d$ and any hyperplane $H$, there is a unique geodesic from $x$ to $H$ (a path which attains the minimum passage time in the definition of $T(x,H)$).

\subsection{Construction of geodesic measures}\label{sec: construction}

In this section, we construct the geodesic graphs $\mathbb{G}$ and measures on them used in Theorem~\ref{thm: main_thm}. To do this, we fix (for the remainder of the paper) a hyperplane $H$ which is supporting for the limit shape $\mathcal{B}$ at some point $z_0 \in \mathcal{B}$. Recall that this means that $H$ contains $z_0$, and $\mathcal{B}$ intersects exactly one component of $\mathbb{R}^d \setminus H$. 

We next enlarge the probability space $\Omega$ to include edge-weights, passage time differences to hyperplanes, and geodesic graphs. So we set
\[
\Omega_1 = [0,\infty)^{\mathcal{E}^d},~ \Omega_2 = \mathbb{R}^{\mathbb{Z}^d \times \mathbb{Z}^d},~ \text{and } \Omega_3 = \{0,1\}^{\vec{\mathcal{E}}^d},
\]
where $\vec{\mathcal{E}}^d$ is the set of directed edges $\{\langle x,y \rangle : \{x,y \} \in \mathcal{E}^d\}$ of the lattice $\mathbb{Z}^d$. Our full enlarged space will be
\[
\widetilde{\Omega} = \Omega_1 \times \Omega_2 \times \Omega_3,
\]
and the sigma-algebra we use on $\widetilde{\Omega}$ is the product Borel sigma-algebra. 

The measures we use will be pushforwards of our original measure $\mathbb{P}$ through a family of maps. To define these maps, we define the vector $\rho$ to be the unique element of $\mathbb{R}^d$ such that our hyperplane $H$ from above satisfies
\begin{equation}\label{eq: rho_introduction}
H = \{z \in \mathbb{R}^d : z \cdot  \rho = 1\}.
\end{equation}
Then for any $\alpha \in \mathbb{R}$, we set
\begin{equation}\label{eq: hyperplane_definition}
H_\rho(\alpha) = \{z \in \mathbb{R}^d : z \cdot \rho = \alpha\}.
\end{equation}
(We allow this notation $H_\rho(\alpha)$ to apply as above even when $\rho \in \mathbb{R}^d$ is arbitrary.) The map $\Phi_\alpha : \Omega \to \widetilde{\Omega}$ is defined separately for the different coordinates of $\widetilde{\Omega}$. Concerning the first, $\Phi_\alpha$ will map $\omega$ to the edge-weight configuration $(t_e)(\omega)$. For the second, if $\alpha \in \mathbb{R}$, we define the configuration
\[
B_\alpha(\omega) = \left( B_\alpha(x,y)(\omega) : x,y \in \mathbb{Z}^d \right) \in \mathbb{R}^{\mathbb{Z}^d \times \mathbb{Z}^d},
\]
where each entry $B_\alpha(x,y)(\omega)$ is defined as a difference of passage times
\[
B_\alpha(x,y)(\omega)  = T(x, H_\rho(\alpha))(\omega) - T(y,H_\rho(\alpha))(\omega).
\]
These entries will be referred to as Busemann functions or Busemann increments by analogy to Busemann functions for geodesics (see \cite[Ch.~5]{ADH17}). For the third and final coordinate, we define a geodesic graph configuration $\eta_\alpha(\omega)$ as
\[
\eta_\alpha(\omega) = \left( \eta_\alpha(\langle x,y\rangle)(\omega) : \langle x,y\rangle \in \vec{\mathcal{E}}^d\right) \in \{0,1\}^{\vec{\mathcal{E}}^d},
\]
where each entry $\eta_\alpha(\langle x,y \rangle)(\omega)$ is the indicator that the edge $\langle x,y \rangle$ is traversed in a geodesic from some point to $H_\rho(\alpha)$:
\[
\eta_\alpha(\langle x,y \rangle)(\omega) = \begin{cases}
1 &\quad \text{if } \{x,y\} \text{ is in a geodesic from }x\text{ to }H_\rho(\alpha) \\
0 &\quad \text{otherwise}.
\end{cases}
\]
Last, the map $\Phi_\alpha$ is defined as
\[
\Phi_\alpha(\omega) = \left( (t_e)(\omega), B_\alpha(\omega), \eta_\alpha(\omega)\right) \in \widetilde{\Omega},
\]
and the measure $\mu_\alpha$ is the pushforward
\[
\mu_\alpha = \mathbb{P} \circ \Phi_\alpha^{-1}.
\]

Given the list of definitions above, we can take $\alpha \to \infty$ (moving the hyperplane $H_\rho(\alpha)$ to infinity) and formalize the idea of limiting geodesics in the direction of $H$. We would like to define our geodesic graph measure as $\lim_{\alpha \to \infty} \mu_\alpha$, but we do not know that this limit exists. Therefore we need to take a subsequence. To ensure that any subsequential limit is invariant under translations of the lattice, we need to average the measures. Lebesgue measurability of the function $\alpha \mapsto \mu_\alpha(A)$ for fixed events $A$ follows from the argument of \cite[App.~A]{DH14}, and so we can define
\[
\mu_n^* = \frac{1}{n} \int_0^n \mu_\alpha~\text{d}\alpha,
\]
and let $\mu$ be any subsequential limit of the sequence $(\mu_n^*)$. (It is elementary to check that $(\mu_n^*)$ is a tight sequence; this follows from the bound $|B_\alpha(x,y)| \leq T(x,y)$.) 

An important property of $\mu$ is its invariance under translations. Given $z \in \mathbb{Z}^d$, we define the translation by $z$ on the space $\widetilde{\Omega}$ as follows. An arbitrary element of $\widetilde{\Omega}$ we will write as 
\[
\widetilde{\omega} = ((t_e), B,\eta) \in \widetilde{\Omega},
\]
where $B = (B(x,y))_{x,y \in \mathbb{Z}^d}$ and $\eta = (\eta(\langle x,y \rangle))_{\langle x,y \rangle \in \vec{\mathcal{E}}^d}$. Note that under the measure $\mu_\alpha$, the triple $((t_e),B,\eta)$ has the same distribution as $((t_e),B_\alpha,\eta_\alpha)$ does under $\mathbb{P}$. The translation $U_z$ acts on the different coordinates of $\widetilde{\Omega}$ as
\[
U_z((t_e),B,\eta) = ((t_{e+z}), (B(x+z,y+z))_{x,y}, (\eta(\langle x+z,y+z\rangle))_{\langle x,y\rangle}),
\]
where, for example, $e+z$ is the edge $\{w+z,v+z\}$, if $e = \{w,v\}$. Given this definition, we have the following lemma.
\begin{lemma}
For any $z \in \mathbb{Z}^d$, the measure $\mu$ is invariant under $U_z$.
\end{lemma}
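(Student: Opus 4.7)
The plan is to show that each measure $\mu_\alpha$ is \emph{almost} translation invariant, failing only because $U_z$ shifts the reference hyperplane $H_\rho(\alpha)$ to $H_\rho(\alpha - z\cdot \rho)$. The averaging over $\alpha \in [0,n]$ in the definition of $\mu_n^*$ then washes out this failure up to a boundary term of order $1/n$, and the limit $\mu$ inherits full invariance by weak convergence. The only genuinely delicate step will be the first one, where one must carefully track how both passage times and geodesics transform under a spatial shift of the underlying weight configuration; everything afterwards is a standard ``endpoints are negligible'' argument that is precisely what motivates the averaging in the construction of $\mu_n^*$.

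First I would establish the intertwining identity $\mu_\alpha \circ U_z^{-1} = \mu_{\alpha - z\cdot \rho}$ for every $\alpha \in \mathbb{R}$ and $z \in \mathbb{Z}^d$. Let $\tau_z : \Omega \to \Omega$ denote the shift $(\tau_z \omega)(e) = \omega(e+z)$, under which $\mathbb{P}$ is invariant by assumption \textbf{A} or \textbf{B}. A vertex-by-vertex translation by $z$ is a weight-preserving bijection between paths from $x$ to $H_\rho(\alpha - z\cdot\rho)$ in the configuration $\tau_z\omega$ and paths from $x+z$ to $H_\rho(\alpha)$ in $\omega$, since $H_\rho(\alpha - z\cdot\rho) + z = H_\rho(\alpha)$; hence $T(x, H_\rho(\alpha - z\cdot\rho))(\tau_z \omega) = T(x+z, H_\rho(\alpha))(\omega)$. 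Taking differences and tracking the corresponding geodesics through the same bijection gives
\[
B_{\alpha - z\cdot\rho}(x,y) \circ \tau_z = B_\alpha(x+z,y+z), \qquad \eta_{\alpha - z\cdot\rho}(\langle x,y\rangle) \circ \tau_z = \eta_\alpha(\langle x+z, y+z\rangle),
\]
and combined with $t_e \circ \tau_z = t_{e+z}$ on the first coordinate this is exactly the identity $U_z \circ \Phi_\alpha = \Phi_{\alpha - z\cdot\rho} \circ \tau_z$. Pushing forward by $\mathbb{P}$ and using $\mathbb{P}\circ\tau_z^{-1} = \mathbb{P}$ yields $\mu_\alpha \circ U_z^{-1} = \mu_{\alpha - z\cdot\rho}$.

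Substituting this into the average,
\[
\mu_n^* \circ U_z^{-1} = \frac{1}{n}\int_0^n \mu_{\alpha - z\cdot\rho}\,d\alpha = \frac{1}{n}\int_{-z\cdot\rho}^{n - z\cdot\rho} \mu_\beta\,d\beta,
\]
and the intervals $[0,n]$ and $[-z\cdot\rho,\,n - z\cdot\rho]$ differ by a symmetric difference of Lebesgue measure $2|z\cdot\rho|$. Since each $\mu_\beta$ is a probability measure, for any bounded continuous $f : \widetilde{\Omega} \to \mathbb{R}$ we obtain
\[
\bigl|(\mu_n^* \circ U_z^{-1})(f) - \mu_n^*(f)\bigr| \le \frac{2|z\cdot\rho|}{n}\,\|f\|_\infty \longrightarrow 0
\]
as $n \to \infty$. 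Because $U_z$ is a homeomorphism of the product space $\widetilde{\Omega}$, weak convergence $\mu_{n_k}^* \to \mu$ along a subsequence gives $\mu_{n_k}^* \circ U_z^{-1} \to \mu \circ U_z^{-1}$; combined with the previous display, $(\mu \circ U_z^{-1})(f) = \mu(f)$ for every bounded continuous $f$, hence $\mu \circ U_z^{-1} = \mu$.
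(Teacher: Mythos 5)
Your proof is correct, and it is essentially the argument the paper invokes by citing \cite[Prop.~3.2]{DH14}: the intertwining identity $U_z \circ \Phi_\alpha = \Phi_{\alpha - z\cdot\rho}\circ \tau_z$, the resulting shift $\mu_\alpha \circ U_z^{-1} = \mu_{\alpha - z\cdot\rho}$, and the $O(1/n)$ boundary term absorbed by the Ces\`aro average before passing to the weak subsequential limit. The only ingredient you use implicitly beyond this is measurability of $\alpha \mapsto \mu_\alpha(A)$ to justify the change of variables in the integral, which the paper has already established via \cite[App.~A]{DH14}, so there is no gap.
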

\begin{proof}
The proof is identical to \cite[Prop.~3.2]{DH14}, where the statement is proved for an analogous construction of $\mu$ only in two dimensions.
\end{proof}

\subsection{Properties of the Busemann coordinate}

In this section, we state properties of the variables $(B(x,y))$. The proofs of these properties are all identical to those of their two-dimensional analogues in \cite{DH14}; the extension to higher dimensions has been discussed in \cite[Sec.~5.4]{ADH17}.

The first result gives some basic properties of $B(x,y)$. 
\begin{proposition}
The coordinate $(B(x,y))$ satisfies the following for $x,y,z \in \mathbb{Z}^d$:
\begin{enumerate}
\item $B(x,y) + B(y,z) = B(x,z)~\mu\text{-almost surely},$
\item $B$ is bounded by $T$:
\[
|B(x,y)| \leq T(x,y) ~\mu\text{-almost surely}.
\]
\item The mean of $B$ is equal to
\[
\mathbb{E}_\mu B(x,y) = \rho\cdot (y-x),
\]
where $\rho$ was introduced in \eqref{eq: rho_introduction} and `$\cdot$' is the standard dot product.
\end{enumerate}
\end{proposition}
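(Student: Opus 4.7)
The plan is to deduce all three identities by pushing their pre-limit analogues through the averaging and subsequential weak-limit construction.

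For parts (1) and (2), the key observation is that both relations already hold pointwise on $\Omega$ at the pre-limit level. Additivity is a telescoping of the passage times:
$B_\alpha(x,y) + B_\alpha(y,z) \equiv T(x,H_\rho(\alpha)) - T(z,H_\rho(\alpha)) \equiv B_\alpha(x,z)$.
The bound $|B_\alpha(x,y)| \leq T(x,y)$ follows from the two-sided triangle inequality $T(x,H_\rho(\alpha)) \leq T(x,y) + T(y,H_\rho(\alpha))$ and its symmetric counterpart. Hence both identities hold $\mu_\alpha$-a.s.\ and, by convex combination, $\mu_n^*$-a.s.\ for every $n$. Each event is the zero set, respectively sublevel set, of a continuous function of finitely many coordinates of $\widetilde{\Omega}$, hence closed; the Portmanteau theorem therefore yields $\mu(\text{event}) \geq \limsup_k \mu_{n_k}^*(\text{event}) = 1$ for any weak subsequential limit $\mu$.

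For part (3), I would compute the pre-limit mean and exploit translation invariance. Writing $h_\alpha(x) := \mathbb{E}\, T(x, H_\rho(\alpha))$ and using translation invariance of $(t_e)$ under integer shifts (which holds under either \textbf{A} or \textbf{B}), one has $h_\alpha(x) = h_{\alpha - \rho \cdot x}(0)$, so
\[
\mathbb{E}_{\mu_\alpha} B(x,y) = h_{\alpha - \rho \cdot x}(0) - h_{\alpha - \rho \cdot y}(0).
\]
Averaging over $\alpha \in [0,n]$ and changing variables, the Cesaro mean $\mathbb{E}_{\mu_n^*} B(x,y)$ is a difference of two integrals of the single function $h_\beta(0)$ over intervals of length $n$ shifted by $\rho \cdot (y-x)$. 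The overlapping bulk cancels exactly, leaving contributions from two endpoint intervals of length $|\rho \cdot (y-x)|$, one near $\beta=0$ and one near $\beta=n$. The near-zero piece is $O(1/n)$. For the near-$n$ piece, the supporting-hyperplane property $\rho \cdot v \leq g(v)$ (with equality at $v = z_0$) gives $\min_{w \in H_\rho(\beta)} g(w) = \beta$ for $\beta>0$, so the shape theorem \eqref{eq: shape_theorem} implies $T(0,H_\rho(\beta))/\beta \to 1$ a.s.; uniform integrability of this ratio (a consequence of the moment condition in \textbf{A} or \textbf{B}, via the deterministic bound $T(0,H_\rho(\beta)) \leq T(0,\lfloor \beta z_0 \rfloor)$) upgrades this to $h_\beta(0)/\beta \to 1$. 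Therefore $\mathbb{E}_{\mu_n^*} B(x,y) \to \rho \cdot (y-x)$.

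Finally, I pass this limit to the weak subsequential limit $\mu = \lim_k \mu_{n_k}^*$. The $(t_e)$-marginal of every $\mu_n^*$ and of $\mu$ coincides with $\mathbb{P}$, so the bound $|B(x,y)| \leq T(x,y) \in L^1(\mathbb{P})$ already established in part (2) supplies uniform integrability of the coordinate $B(x,y)$ along the subsequence. Combined with the weak convergence and continuity of the coordinate map, this gives $\mathbb{E}_\mu B(x,y) = \lim_k \mathbb{E}_{\mu_{n_k}^*} B(x,y) = \rho \cdot (y-x)$. The only nontrivial input is the Cesaro-averaged shape-theorem statement $h_\beta(0)/\beta \to 1$; everything else is routine measure-theoretic bookkeeping, identical in spirit to the two-dimensional construction of \cite{DH14}.
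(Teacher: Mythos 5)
Your proof is correct and, in substance, it is the same argument the paper relies on: the paper gives no in-line proof, citing \cite[Prop.~3.4]{DH14} for items 1--2 and \cite[Lem.~5.11]{ADH17} for item 3, and those proofs proceed exactly as you do --- the pre-limit identities $B_\alpha(x,y)+B_\alpha(y,z)=B_\alpha(x,z)$ and $|B_\alpha(x,y)|\le T(x,y)$ pushed through the Ces\`aro average and the weak subsequential limit for (1)--(2), and for (3) the telescoping/endpoint-interval computation based on translation invariance, the point-to-hyperplane asymptotics $\mathbb{E}\,T(0,H_\rho(\beta))/\beta\to 1$ (using the supporting-hyperplane inequality $\rho\cdot v\le g(v)$ and domination by $T(0,\lfloor\beta z_0\rfloor)$), and uniform integrability of $B(x,y)$ coming from $|B(x,y)|\le T(x,y)\in L^1(\mathbb{P})$ together with the fact that the $\Omega_1$-marginal of every $\mu_n^*$ is $\mathbb{P}$. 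One small imprecision in your Portmanteau step: the event $\{|B(x,y)|\le T(x,y)\}$ is not the sublevel set of a continuous function of finitely many coordinates, since $T(x,y)$ depends on all edge weights and, in the product topology on $\Omega_1$, is only upper semicontinuous (an infimum over paths of continuous functions). This does not create a gap: upper semicontinuity of $T(x,y)$ makes $|B(x,y)|-T(x,y)$ lower semicontinuous, hence the event is still closed and the inequality $\mu(\cdot)\ge\limsup_k\mu_{n_k}^*(\cdot)=1$ applies as you state.
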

\begin{proof}
Items 1 and 2 correspond to items 1 and 4 of \cite[Prop.~3.4]{DH14}. Item 3 is given in \cite[Lem.~5.11]{ADH17}.
\end{proof}

Item 3 of the previous proposition gives the mean of $B$. Because $\mu$ is not necessarily ergodic, the large-scale asymptotics of $B(x,y)$ are not necessarily given by $\rho\cdot (y-x)$. Instead, this quantity behaves like $\varrho \cdot (y-x)$ for some random $\varrho$. The following is \cite[Lemma~5.12]{ADH17}.
\begin{theorem}[Shape theorem for $B$]
There exists a nonzero random vector $\varrho\in \mathbb{R}^d$ such that for any $\epsilon>0$,
\[
\mu\left( |B(0,x) - \varrho\cdot x|  > \epsilon \|x\|_1 \text{ for infinitely many } x \in \mathbb{Z}^d\right) = 0.
\]
The vector $\varrho$ satisfies the following conditions:
\begin{enumerate}
\item $\mu$-almost surely, the hyperplane
\[
H_\varrho(1) := \{w \in \mathbb{R}^d : \varrho\cdot w = 1\}
\]
is a supporting hyperplane for $\mathcal{B}$ at $z_0$.
\item The mean of $\varrho$ under $\mu$ is $\rho$:
\[
\mathbb{E}_\mu \varrho = \rho.
\]
\item Consequently, if $\partial \mathcal{B}$ is differentiable at $z_0$, then
\[
\mu(\varrho=\rho) = 1.
\]
\end{enumerate}
\end{theorem}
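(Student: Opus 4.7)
The plan is to treat the process $(B(x,y))_{x,y \in \mathbb{Z}^d}$ as a stationary additive cocycle on $(\widetilde{\Omega},\mu)$ and prove a shape-theorem style result by combining the Birkhoff ergodic theorem in coordinate directions, the cocycle identity from item~1 of the previous proposition, and the $T$-shape theorem \eqref{eq: shape_theorem}. Non-ergodicity of $\mu$ will be handled by conditioning on the invariant $\sigma$-algebra rather than seeking a deterministic limit.

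First I would construct the random vector $\varrho$. For each coordinate direction $e_i$, the cocycle identity gives
\[
B(0, n e_i) = \sum_{k=0}^{n-1} B(k e_i, (k+1)e_i),
\]
and by translation invariance of $\mu$ the summands form a stationary sequence which, via $|B(x,y)| \leq T(x,y)$ and the moment hypotheses of \textbf{A} or \textbf{B}, lies in $L^1(\mu)$. Birkhoff's theorem then produces
\[
\varrho_i := \lim_{n \to \infty} \frac{B(0,ne_i)}{n} = \mathbb{E}_\mu\!\left[B(0,e_i) \mid \mathcal{I}\right] \quad \text{$\mu$-a.s.\ and in } L^1,
\]
where $\mathcal{I}$ is the $\sigma$-algebra of $U_z$-invariant sets. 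Set $\varrho = (\varrho_1,\dots,\varrho_d)$. By the cocycle identity, $B(0, nx)/n \to \varrho \cdot x$ for every $x \in \mathbb{Z}^d$. To upgrade this into the required uniform control $|B(0,x) - \varrho \cdot x| = o(\|x\|_1)$ along arbitrary sequences $x \to \infty$, I would follow the standard shape-theorem argument: approximate $x$ by $n y$ for $y \in \mathbb{Z}^d$ from a finite rational $\epsilon$-net on the sphere, control the cocycle error by $T$-distances of the residuals, and then invoke \eqref{eq: shape_theorem} together with the boundedness of $\mathcal{B}$ to absorb those residuals. This is the place where the main technical subtlety lies, since $\mu$ need not be ergodic, so one must quantify convergence in each ergodic component simultaneously.

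Next I would derive the three numbered properties. For item~1, dividing $|B(0,x)| \leq T(0,x)$ by $n$ along $x = n y$ and taking $n \to \infty$ yields $\varrho \cdot y \leq g(y)$ for every $y \in \mathbb{Z}^d$, hence $\varrho \cdot w \leq 1$ for all $w \in \mathcal{B}$, so $H_\varrho(1)$ is a supporting hyperplane of $\mathcal{B}$. To pin the contact point at $z_0$, I would use the construction of $\mu$: the pre-limit Busemann increments $B_\alpha(0,y)$ are differences of passage times to $H_\rho(\alpha)$, and since $H = H_\rho(1)$ supports $\mathcal{B}$ at $z_0$ one has $T(0, H_\rho(\alpha))/\alpha \to 1 = g(z_0)$, with geodesics to $H_\rho(\alpha)$ asymptotically exiting through points near $\alpha z_0$. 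Testing against $y = \lfloor \alpha z_0 \rfloor$ and using the convergence proved above then forces $\varrho \cdot z_0 = 1$ $\mu$-a.s., establishing that the contact occurs at $z_0$. Item~2 is essentially free: taking expectations in the defining formula for $\varrho_i$ and applying item~3 of the previous proposition gives $\mathbb{E}_\mu \varrho_i = \mathbb{E}_\mu B(0,e_i) = \rho \cdot e_i$.

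Finally, item~3 is a convex-geometry consequence of items~1 and~2. If $\partial \mathcal{B}$ is differentiable at $z_0$, the supporting hyperplane there is unique, so on the $\mu$-full event from item~1 the linear functional $w \mapsto \varrho \cdot w$ is forced to coincide with the unique supporting functional at $z_0$. Item~2 identifies this deterministic value as $\rho$, giving $\mu(\varrho = \rho) = 1$. The main obstacle I anticipate is the first step: establishing the uniform shape-theorem asymptotics for the cocycle $B$ under a merely translation-invariant (possibly non-ergodic) measure $\mu$, where one must control the approximation errors on all ergodic components at once; the algebraic derivations of items~1--3 from that shape theorem are comparatively routine.
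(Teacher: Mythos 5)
The paper itself offers no proof of this theorem: it is quoted as \cite[Lemma~5.12]{ADH17}, with the remark that the arguments are identical to the two-dimensional ones of \cite{DH14}. Your first step — building $\varrho$ from the Busemann cocycle via Birkhoff's theorem with the limit expressed as a conditional expectation on the invariant $\sigma$-algebra, and upgrading to a uniform shape theorem by combining the cocycle property, the bound $|B(x,y)|\leq T(x,y)$, and the shape theorem \eqref{eq: shape_theorem} — is exactly the strategy of that cited argument, and your derivation of item~2 (take expectations in $\varrho_i=\mathbb{E}_\mu[B(0,e_i)\mid\mathcal{I}]$ and use $\mathbb{E}_\mu B(0,e_i)=\rho\cdot e_i$) and of item~3 (uniqueness of the supporting hyperplane under differentiability) are the standard ones. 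So most of the proposal is sound in outline.

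The genuine gap is in how you pin the contact point of $H_\varrho(1)$ at $z_0$ in item~1. You argue that geodesics to $H_\rho(\alpha)$ ``asymptotically exit through points near $\alpha z_0$'' and that testing the pre-limit increments at $y=\lfloor\alpha z_0\rfloor$ forces $\varrho\cdot z_0=1$. Two problems: first, the directional claim about pre-limit geodesics is precisely the kind of statement that is unknown without curvature or differentiability assumptions on $\partial\mathcal{B}$ — the contact set $S_\rho=\partial\mathcal{B}\cap H_\rho(1)$ may be a whole facet, and no control of where the geodesics hit $H_\rho(\alpha)$ is available (indeed such directedness statements, like Proposition~\ref{prop: directedness}, are normally derived \emph{from} this Busemann shape theorem, so the reasoning risks circularity). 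Second, even the harmless fact $T(0,H_\rho(\alpha))/\alpha\to 1$ concerns the fields $B_\alpha$ under $\mathbb{P}$, while $\varrho$ lives under the subsequential Ces\`aro weak limit $\mu$; pointwise pre-limit asymptotics do not transfer to $\mu$ without an argument, and the only transfer mechanism you have is through expectations. The standard (and easy) repair uses exactly your item~2 in the opposite order: from $|B(0,x)|\leq T(0,x)$ you already get $\varrho\cdot w\leq g(w)$ for all $w$, hence $\varrho\cdot z_0\leq g(z_0)=1$ $\mu$-a.s.; since $\mathbb{E}_\mu[\varrho\cdot z_0]=\rho\cdot z_0=1$, a random variable bounded above by $1$ with mean $1$ equals $1$ a.s., giving $\varrho\cdot z_0=1$ and hence support at $z_0$ (and incidentally that $\varrho\neq 0$). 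With that substitution your argument matches the cited proof; as written, the contact-point step would not go through.
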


An important property of $\varrho$ is that it is translation invariant. Indeed, for any $z \in \mathbb{Z}^d$, one has
\begin{equation}\label{eq: TI_varrho}
\varrho(U_z(\widetilde \omega)) = \varrho(\widetilde \omega),~\mu\text{-almost surely}.
\end{equation}
The two-dimensional case of this statement was shown in \cite[Theorem~4.2]{DH14}, and the extension to $d$ dimensions is straightforward.

\subsection{Properties of the graph coordinate}

Given the variables $(\eta(\langle x,y\rangle))$ from the third coordinate of $\widetilde\omega$, we construct a directed graph $\mathbb{G}$ as follows. The vertex set of $\mathbb{G}$ is $\mathbb{Z}^d$. A directed edge $\langle x,y\rangle$ is an edge of $\mathbb{G}$ if and only if $\eta(\langle x,y \rangle) = 1$. We write $x \to z$ for vertices $x,z \in \mathbb{Z}^d$ if there is a directed path in $\mathbb{G}$ from $x$ to $z$.

First we give some basic properties of $\mathbb{G}$. These are stated in \cite[p.~118]{ADH17} for general $d \geq 2$ and are proved for $d=2$ in \cite[Propositions~5.1--5.2]{DH14}. The proofs carry over to the case $d \geq 2$.
\begin{proposition}\label{prop: graph_properties}
With $\mu$-probability one, the following statements hold for $x,y,z \in \mathbb{Z}^d$.
\begin{enumerate}
\item Each directed path in $\mathbb{G}$ is a geodesic. Therefore, viewed as an undirected graph, $\mathbb{G}$ has no circuits.
\item If $x \to y$ in $\mathbb{G}$ then $B(x,y) = T(x,y)$.
\item If $x \to z$ and $y \to z$ in $\mathbb{G}$ then $B(x,y) = T(x,z) - T(y,z)$.
\item In $\mathbb{G}$, $x$ has out-degree one.
\end{enumerate}
\end{proposition}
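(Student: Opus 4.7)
The plan is to verify each assertion first under the pre-limit measures $\mu_\alpha$ for all sufficiently large $\alpha$, using the explicit description of $\eta_\alpha$ and $B_\alpha$ in terms of geodesics to $H_\rho(\alpha)$, and then to transfer to the subsequential limit $\mu$ via weak convergence. The key input under both {\bf A} and {\bf B} is that point-to-hyperplane geodesics a.s.~exist and are unique, so that two such geodesics with the same target which share a vertex must coincide from that vertex on. This coalescence organizes the directed edges of $\mathbb{G}_\alpha$ into a forest oriented toward $H_\rho(\alpha)$, and every directed path $u_0\to u_1\to\cdots\to u_k$ in this forest is a prefix of $\Gamma_{u_0}(\alpha)$.

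Given this forest structure under $\mu_\alpha$, I would fix $x,y,z\in\mathbb{Z}^d$ and take $\alpha$ large enough that $x,y,z$ lie strictly on the near side of $H_\rho(\alpha)$. Item~1 is then immediate: each directed path is a subpath of a point-to-hyperplane geodesic and hence itself a geodesic, and the forest structure precludes undirected circuits. Item~4 is the a.s.~uniqueness of the first edge of $\Gamma_x(\alpha)$. For item~2, if $x\to y$ in $\mathbb{G}$ then $\Gamma_x(\alpha)$ passes through $y$, giving
\[
T(x,H_\rho(\alpha))=T(x,y)+T(y,H_\rho(\alpha)),
\]
which by the definition of the Busemann coordinate yields $B(x,y)=T(x,y)$ under $\mu_\alpha$. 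Item~3 is the same additivity applied at the common descendant $z$ for both $x$ and $y$.

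To pass from $\mu_\alpha$ to $\mu$, observe that for each fixed triple $x,y,z$ the relevant event $A_{x,y,z}$ has $\mu_\alpha$-measure one for all $\alpha$ outside a bounded interval depending only on $x,y,z$, so $\mu_n^*(A_{x,y,z})\ge 1-O(1/n)$; writing $A_{x,y,z}$ as a countable intersection of closed sets in the product topology on $\widetilde{\Omega}$ and applying the portmanteau theorem along the subsequence defining $\mu$ then gives $\mu(A_{x,y,z})=1$, and a union bound over the countable set of triples finishes the proof. The main technical obstacle is verifying closedness/continuity for item~2, since $T(x,y)$ is an infimum over infinitely many paths and is not a local function of $(t_e)$: one handles this either by restricting the infimum to a large box and using the shape theorem~\eqref{eq: shape_theorem} to show the true geodesic stays inside, or by the approximation argument in \cite[Props.~5.1--5.2]{DH14}, which carries over to $d\geq 2$ once one has the shape theorem in dimension $d$.
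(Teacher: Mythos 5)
Your proposal is correct and is essentially the argument the paper relies on: the paper proves Proposition~\ref{prop: graph_properties} by citing \cite[p.~118]{ADH17} and \cite[Props.~5.1--5.2]{DH14}, whose strategy is exactly yours — establish each property under $\mu_\alpha$ using existence, uniqueness and coalescence of point-to-hyperplane geodesics (so that $\mathbb{G}_\alpha$ is a forest in which every directed path is a prefix of some $\Gamma_x(\alpha)$), then transfer to $\mu$ along $(\mu^*_{n_k})$ via a portmanteau argument with countably many closed/cylinder events, noting the exceptional set of $\alpha$ for the out-degree condition is bounded. The obstacle you flag for item~2 is the right one and your fixes work; the standard resolution is to phrase the closed event using $T(\gamma)$ for the fixed directed path $\gamma$ (a continuous, local function of the weights) rather than $T(x,y)$, i.e.\ intersect over $\gamma$ the events $\{\eta\equiv 1 \text{ on }\gamma\}^c\cup\bigl(\{B(x,y)=T(\gamma)\}\cap\bigcap_{\sigma}\{T(\gamma)\le T(\sigma)\}\bigr)$, each of which is closed and of full $\mu_\alpha$-measure.
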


From items 1 and 4 above, there is exactly one infinite self-avoiding directed path $\Gamma_x$ in $\mathbb{G}$ starting at each $x \in \mathbb{Z}^d$. In the following proposition (which is \cite[Theorem~5.14]{ADH17}), we relate the coordinates $(B(x,y))$ and $(\eta(\langle x,y \rangle))$ to the asymptotic direction of the $\Gamma_x$'s. If $(x_n)$ is a sequence of points in $\mathbb{Z}^d$ and $S \subset \partial \mathcal{B}$, we say that $(x_n)$ is asymptotically directed in $S$ if every subsequential limit of $(x_n/g(x_n))$ is contained in $S$.

\begin{proposition}\label{prop: directedness}
With $\mu$-probability one, for all $x \in \mathbb{Z}^d$, $\Gamma_x$ is asymptotically directed in $S_\varrho = \partial \mathcal{B} \cap H_\varrho(1)$.
\end{proposition}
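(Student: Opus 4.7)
The plan is to combine the two shape theorems (for $T$ and for $B$) along the sequence of vertices of $\Gamma_x$, using the crucial identity $B(x,y) = T(x,y)$ when $x \to y$ in $\mathbb{G}$.

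First, I would work on the $\mu$-full-measure event $A$ on which: (i) the shape theorem \eqref{eq: shape_theorem} for $T$ holds; (ii) the shape theorem for $B$ holds, so $|B(0,z) - \varrho\cdot z| = o(\|z\|_1)$ as $\|z\|_1 \to \infty$; and (iii) the conclusions of Proposition~\ref{prop: graph_properties} hold. Since $\mathbb{Z}^d$ is countable, it suffices to fix $x \in \mathbb{Z}^d$ and verify the conclusion for this $x$ on $A$. Let $\Gamma_x = (x_0 = x, x_1, x_2, \ldots)$ be the unique infinite self-avoiding directed path starting at $x$ (provided by Proposition~\ref{prop: graph_properties}, items 1 and 4). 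Being self-avoiding in $\mathbb{Z}^d$, it satisfies $\|x_n\|_1 \to \infty$, and hence $g(x_n) \to \infty$ since $g$ is a norm equivalent to $\|\cdot\|_1$.

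Next I would compare $T(0,x_n)$ and $\varrho \cdot x_n$. Since $x_0 \to x_n$ in $\mathbb{G}$, item 2 of Proposition~\ref{prop: graph_properties} gives $B(x_0, x_n) = T(x_0, x_n)$. Using the cocycle property (item 1 of the Busemann proposition) and the bound $|T(x_0, x_n) - T(0, x_n)| \le T(0, x_0)$, which is $O(1)$ in $n$, one obtains
\[
T(0,x_n) = B(0,x_n) - B(0,x_0) + O(1) = \varrho \cdot x_n + o(\|x_n\|_1),
\]
by the shape theorem for $B$. Combining with \eqref{eq: shape_theorem}, which gives $T(0,x_n) = g(x_n) + o(\|x_n\|_1)$, yields
\[
g(x_n) = \varrho \cdot x_n + o(\|x_n\|_1).
\]

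Finally, since $g$ and $\|\cdot\|_1$ are equivalent norms, $\|x_n\|_1/g(x_n)$ is bounded, so dividing the above relation by $g(x_n)$ gives $\varrho \cdot (x_n/g(x_n)) \to 1$. Any subsequential limit $y$ of $(x_n/g(x_n))$ therefore satisfies $\varrho \cdot y = 1$, while homogeneity of $g$ ensures $g(x_n/g(x_n)) = 1$ and hence $g(y) = 1$ by continuity of $g$. Thus $y \in \partial \mathcal{B} \cap H_\varrho(1) = S_\varrho$, which is exactly the definition of $\Gamma_x$ being asymptotically directed in $S_\varrho$.

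There is no serious obstacle in this argument; the only subtlety is that we must apply the shape theorems along a \emph{random} sequence $(x_n)$, but both shape theorems are uniform statements about all sufficiently large $\|z\|_1$, so they automatically apply along any sequence with $\|x_n\|_1 \to \infty$. (Implicitly one uses that $\varrho \neq 0$ on $A$, which is part of the statement of the shape theorem for $B$ and is also forced by the identity $g(x_n) = \varrho \cdot x_n + o(\|x_n\|_1)$ together with $g$ being a norm.)
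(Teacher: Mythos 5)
Your argument is correct and is essentially the same as the proof behind the result the paper cites for this proposition (\cite[Thm.~5.14]{ADH17}, following Damron--Hanson): one combines the shape theorem for $T$ with the shape theorem for $B$ and the identity $B(x,y)=T(x,y)$ along directed paths of $\mathbb{G}$ to conclude that the normalized points of $\Gamma_x$ accumulate only on $\partial\mathcal{B}\cap H_\varrho(1)$. Since the paper gives no independent proof beyond this citation, there is nothing further to compare.
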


\subsection{Main result for $\mu$ and outline of the proof}\label{sec: main_result}

Here we state the formal version of Theorem~\ref{thm: main_thm} in terms of the graph $\mathbb{G}$. Given $x \in \mathbb{Z}^d$, we write
\[
C^b_x = \{y \in \mathbb{Z}^d : y \to x\}
\]
for the ``backward cluster'' of $x$ in $\mathbb{G}$. 
\begin{theorem}\label{thm: main_thm_restated}
With $\mu$-probability one, for all $x \in \mathbb{Z}^d$, $C^b_x$ is finite.
\end{theorem}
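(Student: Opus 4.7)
My plan is to argue by contradiction: suppose $\mu(|C^b_0| = \infty) > 0$, which by translation invariance of $\mu$ is equivalent to $\mu(\exists x: |C^b_x| = \infty) > 0$. Since $\mathbb{G}$ is a subgraph of $\mathbb{Z}^d$ each vertex has in-degree at most $2d$, so if $C^b_0$ is infinite, König's lemma produces an infinite directed path ending at $0$; concatenating this with the forward ray $\Gamma_0$ guaranteed by Proposition~\ref{prop: graph_properties}(4) yields a bi-infinite directed path through $0$. Let $C$ denote the undirected connected component of $\mathbb{G}$ containing it. The goal is to derive a contradiction from the coexistence of such a $C$ with the translation-invariance of $\mu$.

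The central structural step I would prove first is a \emph{bounded intersection lemma}: if $C$ contains a bi-infinite directed path, then $C \cap H_\varrho(\alpha)$ is a.s.~finite for each $\alpha \in \mathbb{R}$. The intuition is that the shape theorem for $B$, combined with Proposition~\ref{prop: graph_properties}(2)--(3), forces the forward ray from any vertex of $C$ to drift asymptotically in the $+\varrho$ direction toward $S_\varrho = \partial \mathcal{B} \cap H_\varrho(1)$, while backward rays (which are themselves geodesics whose passage times are given by $B$) must drift in the $-\varrho$ direction. Vertices of $C$ on a fixed hyperplane $H_\varrho(\alpha)$ are exactly the ``transition points'' between these two opposing tendencies, and quantitative fluctuation bounds from the shape theorem~\eqref{eq: shape_theorem} applied simultaneously to the forward and backward halves should confine these transition points to a bounded transverse window.

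Given the bounded intersection lemma, I would run an edge-modification argument in a slab $H_\varrho([\alpha_1,\alpha_2])$. On the positive-probability event that the portion of $C$ in the slab lies inside a deterministic box $R$, I would raise the weights of all edges adjacent to $R$ to a very large value, using continuity and independence under {\bf A} or the upward finite energy property~\eqref{eq: upward_finite_energy} under {\bf B}. This reroutes the point-to-hyperplane geodesics defining $\eta_\alpha$ so that the bi-infinite path through $R$ is severed, while leaving the backward rays attached to $C$ from outside $R$ essentially intact. The abstract procedure in Theorem~\ref{thm: general_modification} is designed to make this Radon--Nikodym comparison rigorous even though the region $R$ depends on the configuration. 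Finally, a mass-transport / Burton--Keane style argument on the modified system shows that the translation-invariant expected mass sent into a single vertex cannot balance the mass sent out, producing the contradiction.

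I expect the main obstacle to be the bounded intersection lemma. Proposition~\ref{prop: directedness} gives asymptotic directedness of each individual forward ray, and the shape theorem for $B$ is only pointwise, so upgrading these into a \emph{uniform} statement over all vertices of the random component $C$ that lie on a hyperplane is delicate; one must rule out configurations where $C$ has bi-infinite directed subpaths but also spreads arbitrarily far transversally along $H_\varrho(\alpha)$ via many short side-branches. A secondary technical difficulty is tracking how Busemann increments, out-degrees, and the absence-of-circuits property transform under the modification, so that the modified configuration is still a valid realization of the $\eta$-coordinate on the slab complement; invoking the abstract appendix theorem should handle this, but aligning our specific $(B,\eta)$-structure with its hypotheses will require care.
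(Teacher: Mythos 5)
Your skeleton (contradiction, a bounded-intersection statement for components carrying a bi-infinite directed path, an edge modification that severs that path, then mass transport) does match the paper's architecture, but both of your key steps have genuine gaps. First, your route to the bounded intersection lemma does not work: the shape theorem \eqref{eq: shape_theorem} and the shape theorem for $B$ only control errors sublinearly along rays and say nothing about how far a component can spread \emph{transversally} inside a fixed hyperplane, and no fluctuation (wandering-exponent type) bounds are available unconditionally; moreover the assertion that backward rays ``drift in the $-\varrho$ direction'' is not established by anything at hand. The paper instead first proves, via a Burton--Keane encounter-point argument (Lemma~\ref{lem: backbone}), that the vertices with infinite backward cluster in a component form a \emph{single} doubly-infinite path (the backbone); then chooses a rational $\hat\varrho$ (Lemma~\ref{lem: hat_rho_definition}) so that forward rays meet $H_{\hat\varrho}^-(\alpha)$ only finitely often, which yields a well-defined \emph{last} intersection of the backbone with a hyperplane; and finally deduces finiteness of $C \cap_{\mathsf{v}} H_{\hat\varrho}(n)$ by a mass-transport argument (Lemma~\ref{lem: finite_intersection_component}): infinitely many vertices on the hyperplane would all send mass to that single last-intersection point. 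Your proposal is missing both the uniqueness of the backbone and this mass-transport mechanism, and the replacement you suggest would fail.

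Second, the modification step as you describe it breaks down precisely in the main (bounded-support) case: when $S<\infty$ you cannot raise weights ``to a very large value,'' and raising \emph{all} edges adjacent to a box $R$ would also destroy the very geodesics (the component's own rays and the nearby ones) whose structure you need to preserve. The paper's event $\mathsf{A}_2'$ is built exactly to supply the missing quantitative comparison: it guarantees a protected geodesic $\Gamma_y$, disjoint from the backbone, passing within $\epsilon\|\xi_N\|_1$ of $\xi_N$ and with passage time at most $(S-\delta)\|v-y\|_1$ (possible because $\mathbb{E}t_e<S$ plus the shape theorem), together with conditions A2'.4(a)--(c) protecting geodesics from far-away entry points; the modified set $E_N$ \emph{excludes} all protected geodesics, and the weights are raised only to $\lambda=S-\delta/2$. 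Then any path crossing the strip through $E_N$ costs at least $(S-\delta/2)$ per unit length, strictly more than the detour through $\Gamma_y$, which is how \eqref{eq: last_claim_1} (the severing claim) is actually proved; without such a comparison your severing assertion has no proof. (Your ``very large value'' idea essentially only covers the $S=\infty$ case, which the paper treats separately with condition A2'.3'.) The final progenitor/mass-transport contradiction you sketch is correct in spirit, but it needs both the severing and the bounded-intersection input (condition A2'.4(c)) to produce a unique minimal vertex, so it cannot be salvaged independently of the two gaps above.
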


Before giving the proof in the next section, we give an outline of some of the main ideas. The proof will be by contradiction, so we assume in \eqref{eq: main_assumption_for_contradiction} that $\mu(\#C_0^b = \infty) > 0$. In other words, with positive probability, some components of $\mathbb{G}$ contain doubly-infinite directed paths. First we derive some useful properties of such components. In Lemma~\ref{lem: backbone}, we use a Burton-Keane type argument to prove that a component with a doubly-infinite directed path must a.s.~have exactly one. Specifically, one can define a ``backbone'' $B(C)$ of such a component $C$ with vertex set $\{x \in C : \#C_x^b = \infty\}$ and prove it is a single doubly-infinite directed path. One can then think of the component as consisting of the backbone and then finite directed trees connected to it. 

Due to the existence of a backbone, we derive the bounded intersection property in Lemma~\ref{lem: finite_intersection_component} using the mass transport principle. This property states that for certain hyperplanes $H_{\hat{\varrho}}(n)$ chosen close to $H_{\varrho}(n)$ in Lemma~\ref{lem: hat_rho_definition}, a.s.~the intersection of any component containing a nonempty backbone with these hyperplanes must be bounded. As stated in the introduction, this is the important consequence of existence of bigeodesics that leads to a contradiction.

The most complicated arguments of this paper involve constructing an event $\mathsf{A}_2'$ on which to apply an edge-weight modification. This is an event defined by several conditions (in Section~\ref{sec: modification_argument}) that has the property that when we intersect it with the event that certain edges (in some set $E_N$, which can be taken to be finite due to the bounded intersection property) have large enough weight, then there is a component $C$ of $\mathbb{G}$ that contains no vertices on one side of $H_{\hat{\varrho}}(0)$. In the definition of $\mathsf{A}_2'$, this $C$ corresponds to the component of the point $\xi_N$. We construct $\mathsf{A}_2'$ as a superevent of another event $\mathsf{A}_2$ (defined in Section~\ref{sec: main_construction}) which has the condition that 0 is on a backbone. Because of this condition, the component of $0$ on $\mathsf{A}_2$ has the bounded intersection property, and this allows us to include in the definition of $\mathsf{A}_2'$ the requirement that the component $C$ intersects the strip between $H_{\hat{\varrho}}(0)$ and $H_{\hat{\varrho}}(N)$ in a bounded set. (See condition A2'.4(c).) Pictorially, our weight modification ``severs the backbone'' from the event $\mathsf{A}_2$ to create the component $C$ with the properties listed above.

Once $\mathsf{A}_2'$ has been properly defined and intersected with the event $\{t_e \geq \lambda  \text{ for all } e \in E_N\}$ in \eqref{eq: prefinal}, we can complete the proof using a mass transport argument from \cite{DH14}. For any component, we define its progenitor as one of its vertices which is minimal in a particular ordering of $\mathbb{Z}^d$. Because our component $C$ from $\mathsf{A}_2'$ does not contain vertices on one side of $H_{\hat{\varrho}}(0)$ and has bounded intersection with the strip between $H_{\hat{\varrho}}(0)$ and $H_{\hat{\varrho}}(N)$, it has a unique progenitor. However, a.s.~there are no progenitors: if all vertices of a component send unit mass to the progenitor, then the progenitor receives infinite mass, but sends out only mass one. This violates the mass transport principle and gives the needed contradiction to complete the proof.

\section{Proof of Theorem~\ref{thm: main_thm_restated}}\label{sec: proof}

\subsection{Preliminary lemmas}\label{sec: lemmas}

Here we begin the proof of Theorem~\ref{thm: main_thm_restated}. We will argue by contradiction, so we assume that for some $x \in \mathbb{Z}^d$, $\mu(\#C^b_x = \infty) >0$. By translation invariance, this implies
\begin{equation}\label{eq: main_assumption_for_contradiction}
\mu(\#C^b_0=\infty) > 0.
\end{equation}

First we show that any component of $\mathbb{G}$ with an infinite backward path must have a unique ``backbone.'' From here on, the term component will refer to a component of the undirected version of $\mathbb{G}$ (``weak'' component).
\begin{lemma}\label{lem: backbone}
With $\mu$-probability one, for each component $C$ of $\mathbb{G}$, the subgraph $B(C)$ of $\mathbb{G}$ with vertex set $\{x \in C : \#C^b_x = \infty\}$ is either empty or a doubly-infinite directed path.
\end{lemma}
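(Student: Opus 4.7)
The goal is to establish three structural properties of $B(C)$: (a) $B(C)$ is closed under the out-neighbor map, and every $x \in B(C)$ lies on some doubly-infinite directed path contained in $B(C)$; (b) $\mu$-a.s.\ every vertex has in-degree at most one in $B(C)$; (c) any two vertices of $B(C)$ lie on a common directed path. Combining these forces $B(C)$ to be either empty or a single doubly-infinite directed path. For (a), fix $x \in B(C)$ and let $y$ be its unique out-neighbor in $\mathbb{G}$ (Proposition~\ref{prop: graph_properties}(4)); then $C^b_y \supseteq C^b_x \cup \{x\}$ is infinite, so $y \in B(C)$, and $\Gamma_x \subseteq B(C)$. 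For the backward extension, view $C^b_x$ as an in-tree rooted at $x$. This tree is locally finite (the in-degree in $\mathbb{G}$ is at most $2d$) and infinite, so by K\"onig's lemma it contains an infinite self-avoiding ray $\cdots \to z_2 \to z_1 \to x$. Each $z_k$ satisfies $C^b_{z_k} \supseteq \{z_{k+1}, z_{k+2}, \dots\}$, hence $z_k \in B(C)$, producing a doubly-infinite directed path in $B(C)$ through $x$.

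The main work is (b), which I would prove by a Burton-Keane trifurcation argument. Suppose $x \in B(C)$ has two distinct in-neighbors $y_1, y_2 \in B(C)$. Applying (a) to each $y_i$ yields infinite backward rays $P_1, P_2 \subseteq B(C)$ ending at $y_1, y_2$, and we also have the forward ray $\Gamma_x$. I claim $\Gamma_x$, $P_1$, and $P_2$ are pairwise vertex-disjoint. Indeed, any shared vertex would, by the out-degree-one property (Proposition~\ref{prop: graph_properties}(4)), force the two directed paths through it to coincide thereafter, which combined with the way the three arcs enter $x$ produces a directed circuit—contradicting the acyclicity of $\mathbb{G}$ (Proposition~\ref{prop: graph_properties}(1)). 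Consequently $x$ is a trifurcation of the undirected tree $C$: removing $x$ leaves at least three infinite components. Since $\mu$ is translation invariant (by the invariance lemma in Section~\ref{sec: construction}) and $\mathbb{G}$ has uniformly bounded degree, the standard Burton-Keane leaves-counting argument, run on $\mathbb{G}$ within boxes $B_n$, gives
\[
(2n+1)^d \cdot \mu\bigl(0 \text{ is a trifurcation of its cluster in } \mathbb{G}\bigr) \;\leq\; |\partial B_n| \;=\; O(n^{d-1}),
\]
forcing the $\mu$-probability to be zero. Thus a.s.\ no vertex of $B(C)$ is a branching point.

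For (c), let $x, y \in B(C)$. Since the undirected $\mathbb{G}$ is a forest, the unique undirected path from $x$ to $y$ in $C$ decomposes, by the out-degree-one property, into a forward segment from $x$ and a forward segment from $y$ meeting at a common "peak" vertex $z$: schematically $x \to \cdots \to z \leftarrow \cdots \leftarrow y$. If $z \notin \{x,y\}$, then the two distinct predecessors of $z$ on these segments both lie in $B(C)$ (by the forward closure from (a)), making $z$ a branching point of $B(C)$, contradicting (b). Hence either $x \in \Gamma_y$ or $y \in \Gamma_x$; together with the backward extension from (a) and no branching, $B(C)$ collapses onto a single doubly-infinite directed path.

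\textbf{Main obstacle.} The delicate step is (b), namely verifying rigorously that a branching vertex of $B(C)$ really yields three vertex-disjoint infinite arcs in $C$, and then that the Burton-Keane counting argument applies under the measure $\mu$, which is translation invariant but not necessarily ergodic. The disjointness needs careful bookkeeping with the out-degree-one and acyclicity properties, while the Burton-Keane bound itself is standard for bounded-degree translation-invariant subgraphs of $\mathbb{Z}^d$ and requires no ergodicity, since the trifurcation density is estimated directly in expectation.
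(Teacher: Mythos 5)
Your proposal is correct and follows essentially the same route as the paper: the heart of both arguments is that a branching vertex of $B(C)$ (or a ``peak'' vertex on the undirected path between two backbone vertices) is an encounter point/trifurcation of the cluster, which Burton--Keane rules out almost surely under the translation-invariant measure $\mu$, with the comparability of backbone vertices then following from the out-degree-one and no-circuit properties. Your reorganization (explicit K\"onig's lemma for the backward extension, explicit trifurcation count, and reducing the peak case to the no-branching step) only makes explicit what the paper's proof uses implicitly, so there is nothing further to add.
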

\begin{proof}
Suppose that for some outcome, $C$ is a component of $\mathbb{G}$ such that $B(C) \neq \emptyset$. We will show that each vertex $x$ in $B(C)$ has in-degree one and out-degree one, and that between each pair of vertices in $B(C)$ there is a directed path. First, each $x$ has out-degree one by Proposition~\ref{prop: graph_properties}. If any $x$ has in-degree larger than one, then let $y,z$ be two vertices of $B(C)$ such that $\langle y,x\rangle$ and $\langle z,x\rangle$ are edges in $\mathbb{G}$. Then $x$ is an ``encounter point'' in the sense of Burton-Keane \cite{BK89}: the component $C$ is infinite and connected, but removal of $x$ from $C$ splits $C$ into at least three infinite connected components. These components respectively contain $\Gamma_x$, $C^b_y$, and $C^b_z$. The method of \cite{BK89} implies that no vertex is an encounter point with positive probability, so a.s.~any such $x$ must have in-degree at most one. Since $\#C^b_x=\infty$, $x$ must have in-degree one.

To show existence of a directed path, let $x,y$ be distinct vertices in $B(C)$. Since $x,y$ are in the same component, there is a vertex self-avoiding path $\pi$ from $x$ to $y$ (in the undirected version of $\mathbb{G}$). Traversing the edges of $\pi$ in order from $x$ to $y$ as $e_1, \dots, e_n$, we find three possible cases.
\begin{enumerate}
\item $e_1$ is oriented toward $x$ in $\mathbb{G}$. In this case, $e_2$ must be oriented toward $e_1$ since otherwise the common endpoint of $e_1$ and $e_2$ would have out-degree at least two, which is impossible. Continuing in this way, we see that $\pi$ is oriented from $y$ to $x$. In other words, $x \in \Gamma_y$.
\item $e_n$ is oriented toward $y$. By a symmetric argument to that in case 1, $\pi$ is oriented from $x$ to $y$, so $y \in \Gamma_x$.
\item $e_1$ and $e_n$ are oriented away from $x$ and $y$ respectively. Traversing $\pi$ in order from $x$ to $y$ as before, there must be a unique vertex $z \in \pi$ such that all the edges of $\pi$ are oriented toward $z$. (To see why, let $e_k$ be the first edge oriented toward $x$ and then apply the argument from case 1 to the following edges to see that they are also oriented toward $x$.) Because $\mathbb{G}$ has no undirected circuits, the infinite backward clusters $C^b_x$ and $C^b_y$ are disjoint, and so $z$ is an encounter point. Again by the Burton-Keane argument, we get a contradiction.
\end{enumerate}
We conclude that either $x \in \Gamma_y$ or $y \in \Gamma_x$, so there is a directed path connecting $x$ and $y$.
\end{proof}

Next, we prove that infinite forward paths $\Gamma_x$ pass from one side to the other of particular hyperplanes only finitely often. In its statement, we use the following modifications of the definition made in \eqref{eq: hyperplane_definition} for $\hat{\varrho} \in \mathbb{R}^d$ and $\alpha \in \mathbb{R}$: 
\[
H_{\hat{\varrho}}^-(\alpha) = \{z \in \mathbb{R}^d : z \cdot \hat{\varrho} < \alpha\},~H_{\hat{\varrho}}^+(\alpha) = \{z \in \mathbb{R}^d : z \cdot \hat{\varrho} > \alpha\}.
\]
\begin{lemma}\label{lem: hat_rho_definition}
$\mu$-almost surely, there exists a $\varrho$-measurable choice of nonzero $\hat{\varrho} \in \mathbb{Q}^d$ such that for all $x \in \mathbb{Z}^d$ and $\alpha \in \mathbb{R}$,
\[
\# (\Gamma_x \cap_{\mathsf{v}} H_{\hat{\varrho}}^-(\alpha)) <\infty,
\]
where the intersection $\cap_{\mathsf{v}}$ is understood as a vertex intersection.
\end{lemma}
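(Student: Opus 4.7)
The plan is to take $\hat\varrho \in \mathbb{Q}^d$ to be a rational approximation of $\varrho$ chosen close enough that the desired escape property is forced by the asymptotic directedness of Proposition~\ref{prop: directedness}. The key observation is that every $y \in S_\varrho$ satisfies $y \cdot \varrho = 1$ (by definition $S_\varrho \subset H_\varrho(1)$), and $\partial \mathcal{B}$ is compact with $\sup_{y \in \partial\mathcal{B}}|y| \leq R$ for some deterministic $R < \infty$ (using boundedness of $\mathcal{B}$ under A or B). Hence if I choose $\hat\varrho$ with $|\hat\varrho - \varrho| < 1/(2R)$, then for every $y \in S_\varrho$,
\[
y \cdot \hat\varrho \geq y\cdot \varrho - R\,|\hat\varrho - \varrho| \geq 1/2.
\]

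To make the construction measurable, I would fix an enumeration $q_1, q_2, \ldots$ of $\mathbb{Q}^d \setminus \{0\}$ and let $\hat\varrho$ be the $q_i$ with smallest index satisfying $|q_i - \varrho| < \min(1/(2R), |\varrho|/2)$. Since $\varrho$ is $\mu$-a.s.~nonzero (by the shape theorem for $B$), such an index exists, and the resulting $\hat\varrho$ is nonzero, rational, and $\varrho$-measurable.

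With this choice, I would fix $x \in \mathbb{Z}^d$ and write $\Gamma_x = (x_0, x_1, \ldots)$. By Proposition~\ref{prop: graph_properties}, $\Gamma_x$ is self-avoiding, so $\|x_n\|_1 \to \infty$, and since $g$ is a norm ($\mathcal{B}$ bounded with nonempty interior), $g(x_n) \to \infty$ as well. The unit vectors $x_n/g(x_n)$ lie on $\partial \mathcal{B}$, and by Proposition~\ref{prop: directedness} every subsequential limit lies in $S_\varrho$; hence $d(x_n/g(x_n), S_\varrho) \to 0$. Combining this with the uniform bound $y \cdot \hat\varrho \geq 1/2$ for $y \in S_\varrho$ and the continuity of $y \mapsto y \cdot \hat\varrho$ yields
\[
\liminf_{n \to \infty} \frac{x_n \cdot \hat\varrho}{g(x_n)} \geq \tfrac{1}{2}.
\]
Multiplying by $g(x_n) \to \infty$ shows $x_n \cdot \hat\varrho \to \infty$, which immediately implies $\#(\Gamma_x \cap_{\mathsf{v}} H_{\hat\varrho}^-(\alpha)) < \infty$ for every $\alpha \in \mathbb{R}$.

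The main potential obstacle is the simultaneous quantification over $x$ and $\alpha$ on a single full-measure event, but this is largely illusory: uniformity over $\alpha$ is automatic once $x_n \cdot \hat\varrho \to \infty$, and uniformity over $x \in \mathbb{Z}^d$ is already built into the statement of Proposition~\ref{prop: directedness}. No combinatorial or analytic difficulty arises; the lemma is really just the observation that asymptotic directedness in $S_\varrho$, combined with strict positivity of the linear functional $y \mapsto y \cdot \hat\varrho$ on $S_\varrho$, forces every forward geodesic to escape to infinity in the $\hat\varrho$-direction.
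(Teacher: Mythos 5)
Your proposal is correct and follows essentially the same route as the paper: choose a rational $\hat\varrho$ close enough to $\varrho$ that $S_\varrho$ lies uniformly on the positive side of $H_{\hat\varrho}(0)$ (your explicit $1/(2R)$ bound just makes quantitative the paper's step ``$\mathrm{dist}(H_{\hat\varrho}^-(0),S_\varrho)>0$''), and then invoke Proposition~\ref{prop: directedness} to force $\Gamma_x$ out of every half-space $H_{\hat\varrho}^-(\alpha)$. The only difference is cosmetic: you argue directly that $x_n\cdot\hat\varrho\to\infty$, while the paper phrases the same use of directedness as a proof by contradiction via subsequential limit directions.
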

\begin{proof}
For a given vector $\varrho$ such that $H_\varrho(1)$ is a supporting hyperplane for $\mathcal{B}$, we first note that $\text{dist}(H_\varrho^-(0), S_\varrho)>0$ (here, dist refers to Euclidean distance, and we recall that $S_\varrho$ was defined in Proposition~\ref{prop: directedness}). Therefore we can choose a rational $\hat{\varrho}$ in a $\varrho$-measurable way so that 
\begin{equation}\label{eq: to_contradict}
\text{dist}(H_{\hat{\varrho}}^-(0), S_\varrho) > 0 ~\mu\text{-a.s.}.
\end{equation}

Now assume for a contradiction that
\begin{equation}\label{eq: for_contradiction_directed}
\mu\left( \text{for some }\alpha \in \mathbb{R} \text{ and } x \in \mathbb{Z}^d,~\#(\Gamma_x \cap_{\mathsf{v}} H_{\hat{\varrho}}^-(\alpha)) = \infty\right)>0.
\end{equation}
On this event, we can choose a (random) sequence $(x_n)$ in $\mathbb{Z}^d$ such that $x_n \in \Gamma_x \cap_{\mathsf{v}} H_{\hat{\varrho}}^-(\alpha)$ for all $n$ and $\|x_n\|_1 \to \infty$ as $n \to \infty$. By compactness, there is a subsequence $(x_{n_k})$ and a point $z \in \partial \mathcal{B}$ such that $x_{n_k}/g(x_{n_k}) \to z$ as $k \to \infty$. By Proposition~\ref{prop: directedness}, $z \in S_\varrho$, $\mu$-a.s.. Then since $x_{n_k} \in H_{\hat{\varrho}}^-(\alpha)$ and $g(x_{n_k}) \to \infty$,
\[
z\cdot \hat{\varrho} = \lim_{k \to \infty} \frac{x_{n_k}}{g(x_{n_k})} \cdot \hat{\varrho} \leq \alpha \lim_{k \to \infty} \frac{1}{g(x_{n_k})} = 0.
\]
Therefore $z$ a.s.~lies in the closure of $H_{\hat{\varrho}}^-(0)$. We conclude that \eqref{eq: for_contradiction_directed} implies that
\[
\mu\left( \text{dist}\left( H_{\hat\varrho}^-(0),S_\varrho\right) = 0\right) > 0,
\]
which contradicts \eqref{eq: to_contradict}. This shows that \eqref{eq: for_contradiction_directed} cannot hold.
%
%
%
\end{proof}

For the $\hat{\varrho}$ from Lemma~\ref{lem: hat_rho_definition}, the hyperplane $H_{\hat{\varrho}}(\alpha)$ will contain points of $\mathbb{Z}^d$ for only certain values of $\alpha$. Letting $K$ be an integer such that $K\hat{\varrho}\cdot e_i \in \mathbb{Z}$ for each $i=1, \dots, d$, this set of values of $\alpha$ equals
\begin{align}
&\left\{\alpha : \exists a_1, \dots, a_d \in \mathbb{Z} \text{ such that } \sum_i a_i \hat{\varrho} \cdot e_i = \alpha\right\} \nonumber \\
=~& \frac{1}{K} \left\{ \alpha : \exists a_1, \dots, a_d \in \mathbb{Z} \text{ such that } \sum_i a_i K\hat{\varrho}\cdot e_i = \alpha\right\} \label{eq: frostin_nostin}
\end{align}
The set on the right is the set of all integer linear combinations of the integers $K\hat{\varrho}\cdot e_i$. By a variant of B\'ezout's lemma, if we write $\mathfrak{d}$ for the greatest common divisor of these integers, then \eqref{eq: frostin_nostin} equals $\{n \mathfrak{d}/K : n \in \mathbb{Z}\}$. Therefore if we replace $\hat{\varrho}$ with $K\hat{\varrho}/\mathfrak{d}$, then this set of values of $\alpha$ becomes $\mathbb{Z}$. 
\begin{equation}\label{eq: new_varrho_hat}
\text{From this point on, we use this new value of $\hat{\varrho} \in \mathbb{Z}^d$.}
\end{equation}

We will also need to know that for each component $C$ of $\mathbb{G}$ with a nonempty backbone $B(C)$, $C$ intersects $H_{\hat{\varrho}}(\alpha)$ in a bounded set. 
\begin{lemma}\label{lem: finite_intersection_component}
$\mu$-almost surely, the following holds. If $C$ is a component of $\mathbb{G}$ such that $B(C)$ is nonempty, then for each $n \in \mathbb{Z}$, 
\[
\#(C \cap_{\mathsf{v}} H_{\hat{\varrho}}(n)) < \infty.
\]
As before, $C \cap_{\mathsf{v}} H_{\hat{\varrho}}(n)$ is understood to be a vertex intersection. Therefore, if $B(C)$ is nonempty, then for each $\alpha \in \mathbb{R}$,
\[
C \cap H_{\hat\varrho}(\alpha) \text{ is bounded},
\]
where $\cap$ means intersection in $\mathbb{R}^d$ (viewing $\mathbb{G}$ as a graph embedded in $\mathbb{R}^d$).
\end{lemma}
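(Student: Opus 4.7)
The plan is to argue by contradiction using the mass transport principle (MTP). Suppose that with positive $\mu$-probability there is a component $C$ with $B(C) \neq \emptyset$ and $\#(C \cap_{\mathsf{v}} H_{\hat{\varrho}}(n)) = \infty$ for some $n \in \mathbb{Z}$. Since $\hat{\varrho} \in \mathbb{Z}^d$ has greatest common divisor of its entries equal to $1$ (see \eqref{eq: new_varrho_hat}), integer translations act transitively on the family $\{H_{\hat{\varrho}}(n) : n \in \mathbb{Z}\}$. Using the $\mathbb{Z}^d$-invariance of $\mu$, one can reduce to $n=0$ and extract the stronger statement
\[
\mu\bigl(0 \in B(C_0),\ \#(C_0 \cap_{\mathsf{v}} H_{\hat{\varrho}}(0)) = \infty\bigr) > 0.
\]

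Two preparatory observations feed the MTP. First, the Burton--Keane conclusion used inside Lemma~\ref{lem: backbone} shows that each backbone vertex has in-degree at most $2d-1$, exactly one of its in-neighbors lies on the backbone, and each of the other in-neighbors has a finite backward cluster. Writing $\phi(y)$ for the first backbone vertex on $\Gamma_y$, the ``branch'' $\phi^{-1}(b)$ at every backbone vertex $b$ is therefore finite. Second, Proposition~\ref{prop: directedness} and the defining property of $\hat{\varrho}$ (Lemma~\ref{lem: hat_rho_definition}, in the rescaled form \eqref{eq: new_varrho_hat}) imply that the forward ray from any backbone vertex has $\hat{\varrho}$-coordinate tending to $+\infty$, so the set $\{j : b_j \cdot \hat{\varrho} = n\}$ is bounded above. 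Hence whenever $B(C) \cap_{\mathsf{v}} H_{\hat{\varrho}}(n) \neq \emptyset$ the vertex
\[
\eta_n(C) := b_{j^{*}},\qquad j^{*} := \max\{j : b_j \cdot \hat{\varrho} = n\},
\]
is well-defined and translation-covariant.

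The main step is to apply the $\mathbb{Z}^d$-translation-invariant mass-transport rule
\[
m(x,y) := \mathbf{1}\!\left[y \in C_x,\ x = \eta_{y \cdot \hat{\varrho}}(C_x)\right],
\]
with $m(x,y):=0$ when $\eta_{y\cdot\hat{\varrho}}(C_x)$ is undefined. The mass sent out of $y$ is at most $1$, while the mass received at $x$ equals $\#(C_x \cap_{\mathsf{v}} H_{\hat{\varrho}}(x \cdot \hat{\varrho}))\cdot \mathbf{1}[x = \eta_{x \cdot \hat{\varrho}}(C_x)]$. The MTP thus yields
\[
\mathbb{E}_\mu\!\left[\#(C_0 \cap_{\mathsf{v}} H_{\hat{\varrho}}(0)) \cdot \mathbf{1}[0 = \eta_0(C_0)]\right] \ \leq\ 1,
\]
which forces $\mu\bigl(\{\#(C_0 \cap_{\mathsf{v}} H_{\hat{\varrho}}(0)) = \infty\} \cap \{0 = \eta_0(C_0)\}\bigr) = 0$. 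Summing this vanishing identity over translates $z \in G_0 := \mathbb{Z}^d \cap H_{\hat{\varrho}}(0)$ using Fubini and $\mathbb{Z}^d$-invariance gives
\[
\mathbb{E}_\mu\!\left[\sum_{z \in G_0} \mathbf{1}\bigl[\#(C_z \cap_{\mathsf{v}} H_{\hat{\varrho}}(0)) = \infty,\ z = \eta_0(C_z)\bigr]\right] = 0.
\]
Each component with both a level-$0$ backbone vertex and infinite level-$0$ intersection contributes exactly once to the inner sum (via its unique $\eta_0$), so almost surely no such component exists, contradicting the reduction and completing the argument for $n=0$; the general $n$ follows by the same translation already used.

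The hard part I anticipate is the edge case in which $C$ has a backbone that never visits level $n$ yet $\#(C \cap_{\mathsf{v}} H_{\hat{\varrho}}(n)) = \infty$ via infinitely many off-backbone branches that reach level $n$. Since each branch is finite, this would require infinitely many backbone vertices to host sufficiently deep branches. Ruling this out requires a supplementary MTP, e.g.\ the simpler rule $m(x,y) = \mathbf{1}[x = \phi(y)]$ which yields $\mathbb{E}_\mu[\#\phi^{-1}(0)\,\mathbf{1}[0 \in B(C_0)]] \leq \mu(0 \in B(C_0)) < \infty$, combined with a density estimate controlling how many backbone vertices can lie ``far'' from level $n$ while still hosting a branch that reaches it. This is the main technical point beyond the skeleton above.
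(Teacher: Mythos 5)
Your core mass-transport step is sound and is essentially the same mechanism the paper uses (infinitely many level-$n$ vertices of one component all send mass to a single distinguished backbone point, violating the MTP bound), but your argument only covers the case in which the backbone visits the \emph{same} hyperplane level as the one with infinite intersection, and the remaining case is a genuine gap, not a technicality. The reduction you state at the outset, that one may assume $\mu\bigl(0 \in B(C_0),\ \#(C_0 \cap_{\mathsf{v}} H_{\hat{\varrho}}(0)) = \infty\bigr) > 0$, is not justified: translation invariance lets you move the level of infinite intersection to $0$, or place a backbone vertex at the origin, but not both simultaneously, because nothing guarantees that $B(C)$ has any vertex on $H_{\hat{\varrho}}(n)$. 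Lemma~\ref{lem: hat_rho_definition} controls only the forward ends of paths, so the backbone could live entirely on levels different from $n$ while infinitely many finite off-backbone branches reach level $n$. Your transport rule $m(x,y)=\mathbf{1}[y\in C_x,\ x=\eta_{y\cdot\hat{\varrho}}(C_x)]$ assigns zero mass in exactly that situation (since $\eta_{n}(C_x)$ is undefined), so the contradiction never materializes there. You flag this ``edge case'' yourself, but the proposed patch --- the transport $m(x,y)=\mathbf{1}[x=\phi(y)]$ giving finite expected branch size, plus an unspecified ``density estimate'' --- does not close it: finite expected branch size per backbone vertex is perfectly compatible with infinitely many backbone vertices each hosting a branch that touches level $n$, so no contradiction follows from it.

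The paper's proof resolves precisely this mismatch with a two-level, \emph{relative} transport. Since backbone vertices are lattice points and $\hat{\varrho}\in\mathbb{Z}^d$, the backbone certainly meets some level $n'$, and by countable additivity one may fix both $n$ (infinite intersection) and $n'$ (backbone vertex) deterministically. The transport then sends mass from each $x$ to the last intersection of $B(C(x))$ with the translated hyperplane $x + H_{\hat{\varrho}}(n'-n)$; this rule is translation covariant, every vertex of $C$ lying on $H_{\hat{\varrho}}(n)$ sends its mass to the single last intersection of the backbone with $H_{\hat{\varrho}}(n')$ (which exists by the same last-intersection argument you give, via $\pi_x = \Gamma_x$ and Lemma~\ref{lem: hat_rho_definition}), and the MTP is violated without ever needing the backbone to visit level $n$. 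If you replace your ``same-level'' receiver $\eta_{y\cdot\hat{\varrho}}(C_y)$ by this offset receiver, your argument goes through; as written, it does not prove the lemma.
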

\begin{proof}
The second statement follows directly from the first. Indeed, each real intersection point $r$ is on an edge with an endpoint in $H_{\hat\varrho}(n)$ for some $n \in \mathbb{Z}$. Furthermore, one has $|n-r|\leq K$ for some universal $K$ depending only on $\hat{\varrho}$, so if $C \cap H_{\hat\varrho}(\alpha)$ is unbounded, we can pick some $n \in \mathbb{Z}$ such that $\#(C\cap H_{\hat\varrho}(n)) = \infty$. 

For the first statement, we will argue by contradiction, so we assume that
\[
\mu(\exists n \in \mathbb{Z},~\text{component }C \text{ with } B(C) \neq \emptyset \text{ such that } \#(C \cap_{\mathsf{v}} H_{\hat{\varrho}}(n)) = \infty) > 0.
\]
By countable additivity, there is a fixed $n \in \mathbb{Z}$ such that
\[
\mu(\exists ~\text{component }C \text{ with } B(C) \neq \emptyset \text{ such that } \#(C \cap_{\mathsf{v}} H_{\hat{\varrho}}(n)) = \infty) > 0.
\]
Furthermore, there exists $n' \in \mathbb{Z}$ such that
\[
\mu(\exists ~\text{component }C \text{ such that } \#(C \cap H_{\hat{\varrho}}(n)) = \infty \text{ and } \#(B(C) \cap_{\mathsf{v}} H_{\hat{\varrho}}(n')) \geq 1) > 0.
\]


We claim that a.s. on the above event, if we follow $B(C)$ in the forward direction (using the orientation given by the fact that it is a directed path), it has a last intersection with $H_{\hat{\varrho}}(n')$. To show this, let $x$ be a vertex in $B(C) \cap_{\mathsf{v}} H_{\hat{\varrho}}(n')$ and let $\pi_x$ be the portion of $B(C)$ starting from $x$ and proceeding indefinitely in the forward direction. Note that $\pi_x = \Gamma_x$. The reason is that these are both paths in $\mathbb{G}$ that begin with $x$ and follow out-edges. Since every vertex in $\mathbb{G}$ has out-degree 1, the paths must be equal. Now we appeal to Lemma~\ref{lem: hat_rho_definition} to deduce that a.s., $\pi_x$ intersects $H_{\hat{\varrho}}(n')$ finitely many times, and so it has a last intersection with this hyperplane. This shows the claim. 

Now define $A$ to be the event that there exists a component $C$ such that $ \#(C \cap_{\mathsf{v}} H_{\hat{\varrho}}(n)) = \infty$, $\#(B(C) \cap_{\mathsf{v}} H_{\hat{\varrho}}(n')) \geq 1$, and $B(C)$ has a last intersection (in the forward direction) with $H_{\hat\varrho}(n')$. By the above arguments, $\mu(A)>0$. We next define a mass-transport function $m : \mathbb{Z}^d \times \mathbb{Z}^d \to \{0,1\}$ as follows. Set
\[
m(x,y) = \begin{cases}
1 & \quad \text{if } y \in x+H_{\hat{\varrho}}(n'-n) \text{ and } y \text{ is the last }\\
&\quad \text{intersection of }B(C(x)) \text{ with } x+ H_{\hat{\varrho}}(n'-n) \\
0 & \quad \text{otherwise},
\end{cases}
\]
where $C(x)$ is the component of $\mathbb{G}$ containing $x$. By the mass transport principle (see \cite{H99} and \cite[Ch.~8]{LPbook}),
\begin{equation}\label{eq: mass_transport}
1\geq \sum_{x \in \mathbb{Z}^d} \mathbb{E}_\mu m(0,x) = \sum_{x \in \mathbb{Z}^d} \mathbb{E}_\mu m(x,0),
\end{equation}
and so $\sum_{x \in \mathbb{Z}^d} m(x,0)$ is finite a.s.. However, on the event $A$, there exists $y \in \mathbb{Z}^d$ such that $\sum_{x \in \mathbb{Z}^d} m(x,y) = \infty$. Indeed, on $A$, choose a suitable component $C$ and an infinite sequence $(x_k)$ in $C \cap H_{\hat{\varrho}}(n)$. Since $H_{\hat{\varrho}}(n') = x_k + H_{\hat{\varrho}}(n'-n)$ for each $k$, the last intersection point $y$ of $B(C)$ with $H_{\hat{\varrho}}(n')$ satisfies $m(x_k,y) = 1$ for all $k$. Since $A$ has positive probability, we get $\sum_{x \in \mathbb{Z}^d} m(x,y) = \infty$ with positive probability for some fixed $y$, and hence also for $y=0$. This is a contradiction, and completes the proof.
\end{proof}

Next is a related result which gives a more explicit bound on the radius of the intersection of a component with a hyperplane. If $C$ is a component with $B(C) \neq \emptyset$, let $x_n(C)$ be the last intersection of $B(C)$ (following $B(C)$ in the forward direction, but now viewing $B(C)$ is a curve in $\mathbb{R}^d$, so that this last intersection is a point in $\mathbb{R}^d$) with $H_{\hat{\varrho}}(n)$. For any $x \in \mathbb{Z}^d$, let
\[
R(x) = \max\left\{ \|z-w\|_1 : z,w \in C(x) \cap H_{\hat{\varrho}}(\hat{\varrho} \cdot x)\right\},
\]
where, as in Lemma~\ref{lem: finite_intersection_component}, $C(x)$ is the component of $\mathbb{G}$ containing $x$ and $\cap$ means intersection in $\mathbb{R}^d$ (viewing $\mathbb{G}$ as a graph embedded in $\mathbb{R}^d$).

\begin{lemma}\label{lem: CUNY_lemma}
For all $n \in \mathbb{Z}$,
\[
\mu(x_n(C(0)) \in \mathbb{Z}^d, R(x_n(C(0))) \geq r, 0 = x_0(C(0))) = \mu(x_{-n}(C(0)) \in \mathbb{Z}^d, R(0) \geq r, 0 = x_0(C(0))).
\]
In particular, by Lemma~\ref{lem: finite_intersection_component},
\[
\lim_{r \to\infty} \sup_n \mu(x_n(C(0)) \in \mathbb{Z}^d,R(x_n(C(0))) \geq r, 0 = x_0(C(0))) = 0.
\]
\end{lemma}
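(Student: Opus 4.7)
I would prove the identity by a translation-invariance calculation: the outcome $\widetilde\omega$ in the LHS event is compared to the outcome $U_y\widetilde\omega$, where $y = x_n(C(0))(\widetilde\omega)$. Because $\hat{\varrho} \in \mathbb{Z}^d$ by \eqref{eq: new_varrho_hat}, the hyperplane $H_{\hat{\varrho}}(n)$ meets $\mathbb{Z}^d$, so the first step is to decompose
\[
\mu(E_n) = \sum_{y \in \mathbb{Z}^d \cap H_{\hat{\varrho}}(n)} \mu\!\left(E_n \cap \{x_n(C(0)) = y\}\right),
\]
where $E_n$ denotes the LHS event. I would then compute the image of each summand under $U_y$ and apply translation invariance termwise.

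The key computation uses the fact that $\mathbb{G}(U_y\widetilde\omega)$ is the shift of $\mathbb{G}(\widetilde\omega)$ by $-y$. On $\{x_n(C(0)) = y\}$, the vertex $y$ lies on the backbone of $C(0)(\widetilde\omega)$, so $C(y)(\widetilde\omega) = C(0)(\widetilde\omega)$ and hence $C(0)(U_y\widetilde\omega) = C(0)(\widetilde\omega) - y$; the orientation of the backbone is preserved by translation. Using $y\cdot\hat{\varrho} = n$ and the set identity $(S - y) \cap H_{\hat{\varrho}}(k) = (S \cap H_{\hat{\varrho}}(k+n)) - y$ applied to $S = B(C(0))(\widetilde\omega)$, taking last forward intersections gives the translation law
\[
x_k(C(0))(U_y\widetilde\omega) = x_{k+n}(C(0))(\widetilde\omega) - y.
\]
Setting $k=0$ yields $x_0(C(0))(U_y\widetilde\omega) = 0$; setting $k=-n$ yields $x_{-n}(C(0))(U_y\widetilde\omega) = -y \in \mathbb{Z}^d$; and the same subtraction by $y$ shows that $R(0)(U_y\widetilde\omega) = R(y)(\widetilde\omega)$. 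Therefore $U_y$ carries $E_n \cap \{x_n(C(0)) = y\}$ into $F_n \cap \{x_{-n}(C(0)) = -y\}$ (with $F_n$ the RHS event), and $U_{-y}$ inverts this map. Summing over $y$, so that $-y$ ranges over exactly the possible values of $x_{-n}(C(0))$ on $F_n$, and using invariance of $\mu$ under $U_y$, I would conclude $\mu(E_n) = \mu(F_n)$.

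The limiting statement then follows by bounding $\sup_n \mu(E_n) = \sup_n \mu(F_n) \leq \mu(R(0) \geq r, 0 = x_0(C(0)))$ uniformly in $n$. On $\{0 = x_0(C(0))\}$ the backbone of $C(0)$ is nonempty (it contains $0$), so Lemma~\ref{lem: finite_intersection_component} gives $R(0) < \infty$ $\mu$-almost surely on that event; continuity of $\mu$ from above sends the right-hand side to $0$ as $r \to \infty$.

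The main obstacle is the bookkeeping of the shift: one must confirm that $x_n(C(0)) = y$ really places $y$ on the backbone so the component identifications match, carefully track how the hyperplane index shifts by $-n$ for the particular entries $0$, $n$, and $-n$, and check that the bijection between the decomposed subevents holds off a null set so that translation invariance can be applied piece by piece without double-counting.
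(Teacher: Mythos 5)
Your proposal is correct and follows essentially the same route as the paper: the paper's proof is exactly the decomposition over the value $x$ of $x_n(C(0))$, a termwise application of translation invariance (which sends $x_n(C(0))=x$, $R(x)\geq r$, $0=x_0(C(0))$ to $0=x_0(C(0))$, $R(0)\geq r$, $-x=x_{-n}(C(0))$), and a re-summation, with the limiting statement deduced from Lemma~\ref{lem: finite_intersection_component} just as you indicate. Your extra bookkeeping (the shift law $x_k(C(0))(U_y\widetilde\omega)=x_{k+n}(C(0))(\widetilde\omega)-y$ and $R(0)(U_y\widetilde\omega)=R(y)(\widetilde\omega)$) simply makes explicit what the paper's chain of equalities leaves implicit.
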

\begin{proof}
We compute, using translation invariance of $\mu$:
\begin{align*}
& \mu(x_n(C(0)) \in \mathbb{Z}^d, R(x_n(C(0))) \geq r, 0 = x_0(C(0))) \\
=~& \sum_{x \in \mathbb{Z}^d} \mu(x = x_n(C(0)), R(x) \geq r, 0 = x_0(C(0))) \\
=~&\sum_{x \in \mathbb{Z}^d} \mu(0 = x_0(C(-x)), R(0) \geq r, -x=x_{-n}(C(-x))) \\
=~& \sum_{x \in \mathbb{Z}^d} \mu(0 = x_0(C(0)), R(0) \geq r, -x = x_{-n}(C(0))) \\
=~& \mu(x_{-n}(C(0)) \in \mathbb{Z}^d, R(0) \geq r, 0 = x_0(C(0))).
\end{align*}
\end{proof}

The next and last lemma states that backbones have asymptotic directions.
\begin{lemma}\label{lem: Arjun}
One has
\[
\mu\left( \lim_{n \to \infty} \frac{y_n(C)}{\|y_n(C)\|_1} \text{ exists for all }C \text{ with } B(C) \neq \emptyset\right) = 1,
\]
where $\dots, y_{-1}(C), y_0(C), y_1(C), \dots$ is any enumeration of the vertices $B(C)$ in forward order. We write $\theta(C)$ for the limit above.
\end{lemma}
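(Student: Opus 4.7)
On the $\mu$-full-measure event where Lemma~\ref{lem: backbone} holds, each backbone $B(C)$ is a doubly-infinite directed path; fix an enumeration $(y_n)_{n\in\mathbb{Z}}$ in forward order. Because every vertex of $\mathbb{G}$ has out-degree exactly one by Proposition~\ref{prop: graph_properties}(4), the forward tail $(y_n)_{n\geq 0}$ coincides with $\Gamma_{y_0}$. Proposition~\ref{prop: directedness} then yields that every subsequential limit of $y_n/g(y_n)$ lies in $S_\varrho=\partial\mathcal{B}\cap H_\varrho(1)$, and equivalence of the norms $g$ and $\|\cdot\|_1$ (both are $\mathbb{Z}^d$-symmetric norms on $\mathbb{R}^d$) transfers this to $y_n/\|y_n\|_1$: every subsequential limit lies in the compact set $K_\varrho:=\{w/\|w\|_1 : w\in S_\varrho\}$. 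When $S_\varrho$ is a singleton (e.g.\ if $\partial\mathcal{B}$ is differentiable at $z_0$), this already gives the limit.

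For the general case I would upgrade subsequential directedness to a genuine limit via a Palm/ergodic argument. Let $\tilde\mu$ be the Palm measure obtained by conditioning $\mu$ on $\{0\in B(C(0))\}$, an event of positive $\mu$-measure under the working hypothesis \eqref{eq: main_assumption_for_contradiction} combined with Lemma~\ref{lem: backbone} and translation invariance. By the standard Palm theory for translation-invariant random subsets of $\mathbb{Z}^d$, the shift along the backbone $\widetilde\omega\mapsto U_{y_1-y_0}\widetilde\omega$ is $\tilde\mu$-preserving (via a Mecke-type identity), so the sequence of unit lattice increments $(y_{n+1}-y_n)_{n\geq 0}$ is stationary under $\tilde\mu$. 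Applying Birkhoff's theorem coordinatewise produces $\tilde\mu$-a.s.\ convergence of $(y_n-y_0)/n$ to a random vector $\bar v\in\mathbb{R}^d$. Lemma~\ref{lem: hat_rho_definition} forces $\hat\varrho\cdot y_n\to\infty$, hence $\hat\varrho\cdot \bar v>0$ and in particular $\bar v\neq 0$; dividing gives $y_n/\|y_n\|_1\to\bar v/\|\bar v\|_1=:\theta(C)$. For components $C$ not containing $0$ in their backbone, one applies the same argument at any fixed $y\in B(C)$, and countable additivity (over choices of $y$ from $\mathbb{Z}^d$) covers all components simultaneously $\mu$-a.s.

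The main obstacle is the rigorous setup of the Palm construction on $\widetilde{\Omega}$: ensuring that membership in a backbone is measurable, verifying measure-preservation of the random backbone-shift, and ruling out the degenerate case $\bar v=0$ (where Lemma~\ref{lem: hat_rho_definition} is essential). An alternative route avoiding Palm theory would be a direct mass-transport contradiction: if two distinct accumulation points of $y_n/\|y_n\|_1$ existed in $K_\varrho$, then a transport sending mass between backbone vertices in the ``$u$-direction'' versus the ``$v$-direction'' should, via Lemma~\ref{lem: finite_intersection_component} (bounded intersection of any component with each hyperplane $H_{\hat\varrho}(n)$), violate the mass transport principle applied to $\mu$. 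Either route reduces the claim to a single ergodic-theoretic or mass-transport assertion.
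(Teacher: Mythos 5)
The paper does not prove this lemma internally at all: it invokes Chaika--Krishnan \cite[Theorem~2.13]{CK18} on asymptotic velocities of bi-infinite trajectories in stationary coalescing walk models, of which $\mathbb{G}$ under $\mu$ is an instance. Your attempt to give a self-contained Palm/ergodic argument is therefore a genuinely different route, and its skeleton is reasonable: the backbone point-shift is bijective (each backbone vertex has in-degree and out-degree one within the backbone by Lemma~\ref{lem: backbone}), so measure-preservation of the re-rooting map under the conditioned measure can indeed be made rigorous by a mass-transport/point-shift argument, and Birkhoff then gives $(y_n-y_0)/n\to\bar v$ for some random $\bar v$.

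The genuine gap is the step ``Lemma~\ref{lem: hat_rho_definition} forces $\hat\varrho\cdot y_n\to\infty$, hence $\hat\varrho\cdot\bar v>0$.'' This is a non sequitur: $\hat\varrho\cdot y_n\to\infty$ is perfectly compatible with sublinear growth, e.g.\ $\hat\varrho\cdot y_n\sim\sqrt{n}$, in which case $\bar v=0$. And if $\bar v=0$, the law-of-large-numbers statement $y_n/n\to 0$ carries no information whatsoever about the direction $y_n/\|y_n\|_1$, so the conclusion of the lemma does not follow; your first paragraph likewise only yields that subsequential limits lie in $S_\varrho$, which is not a single point in general. To close this you would need a positive-speed statement for backbone geodesics. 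Under assumption {\bf A} one could try to import Kesten's bound that geodesics from $0$ to $x$ have at most $C\|x\|_1$ edges (valid since $\mathbb{P}(t_e=0)=0<p_c$), which together with the shape theorem gives $\liminf_n\|y_n\|_1/n>0$ and hence $\bar v\neq 0$; but no such length bound is available under the general ergodic assumption {\bf B}, so the argument as written does not cover the setting of the lemma. This is precisely the kind of difficulty the citation to \cite{CK18} absorbs. The alternative mass-transport sketch in your last paragraph is too vague to substitute: you do not specify a transport rule, and it is not clear how two accumulation points of $y_n/\|y_n\|_1$ alone would violate the bounded-intersection property of Lemma~\ref{lem: finite_intersection_component}.
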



\begin{proof}
The existence of ``asymptotic velocity'' $\theta$ is shown in \cite{CK18} for bi-infinite trajectories in stationary coalescing walk models (of which our graph $\mathbb{G}$ is one). See \cite[Theorem~2.13]{CK18} and the sentence below its statement.
\end{proof}

\subsection{Absence of infinite backward paths}\label{sec: absence}

%
In this section, we show that the assumption of existence of backward infinite paths (assumption \eqref{eq: main_assumption_for_contradiction}) leads to a contradiction. Most of the argument will consist of building an event $\mathsf{A}_2$ with many conditions. These conditions will be introduced one by one. We define $S$ as the supremum of the support of the distribution of $t_e$:
\begin{equation}\label{eq: S_def}
S = \sup \{x \geq 0 : \mu(t_e \geq x) > 0\}.
\end{equation}
For this section, we assume that
\begin{equation*}\label{eq: bounded}
S<\infty.
\end{equation*}
The case when $S=\infty$ will be sketched in Section~\ref{sec: unbounded_case}.

\subsubsection{Construction of the main event $\mathsf{A}_2$}\label{sec: main_construction}

We begin by noting that, due to Lemma~\ref{lem: hat_rho_definition}, equation \eqref{eq: new_varrho_hat}, and countable additivity, we can fix a deterministic $\vartheta \in \mathbb{Z}^d$ such that
\begin{equation}\label{eq: pasta_supremo}
\mu(\#C_0^b = \infty, \hat{\varrho} = \vartheta) > 0.
\end{equation}
The first step is to show that
\begin{equation}\label{eq: step_1_modification_new}
\mu(\#C_0^b = \infty, \hat{\varrho} = \vartheta, 0 = x_0(C(0)))>0.
\end{equation}
To do this, we give the following claim. Consider an outcome in $\{\#C_0^b = \infty, \hat{\varrho} = \vartheta\}$ and note that for any $k\in \mathbb{N}$, Lemma~\ref{lem: hat_rho_definition} implies that a.s., $x_k(C(0))$ exists as a point of $\mathbb{R}^d$.
\begin{claim}\label{claim: clam}
Either $x_k(C(0)) \in \mathbb{Z}^d$, or at least one endpoint of the edge containing $x_k(C(0))$ is a point of the form $x_n(C(0))$ for $n \in \mathbb{N}$. 
\end{claim}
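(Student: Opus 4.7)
The plan is to locate the edge of the backbone $B(C(0))$ that contains $x_k(C(0))$ and to show that its tail vertex must itself be of the form $x_n(C(0))$. Two ingredients are needed: the defining property of $x_k$ as the \emph{forward-most} intersection of $B(C(0))$ with $H_{\hat{\varrho}}(k)$, and the one-sided forward control from Lemma~\ref{lem: hat_rho_definition}, which will rule out one possible orientation of the edge.

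Suppose $x_k \notin \mathbb{Z}^d$. Since the backbone is a doubly-infinite directed path through $0$ (Lemma~\ref{lem: backbone}), enumerate its vertices in forward order as $\ldots \to w_{-1} \to w_0 = 0 \to w_1 \to \ldots$, and let $e = (w_j, w_{j+1})$ be the unique directed edge whose relative interior contains $x_k$. Because $\hat{\varrho} \in \mathbb{Z}^d$ by \eqref{eq: new_varrho_hat}, the quantities $a := w_j \cdot \hat{\varrho}$ and $b := w_{j+1} \cdot \hat{\varrho}$ are integers. The map $z \mapsto z \cdot \hat{\varrho}$ is linear on $e$ and takes the value $k$ at the interior point $x_k$, so $a \neq b$ and $k$ lies strictly between $a$ and $b$.

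The crucial step is to rule out the orientation $a > b$. In that case the edge descends past $H_{\hat{\varrho}}(k)$, placing $w_{j+1}$ in $H_{\hat{\varrho}}^-(k)$; and since $x_k$ is the last point of $B(C(0))$ on $H_{\hat{\varrho}}(k)$, every subsequent vertex $w_l$ ($l \geq j+1$) must also lie in $H_{\hat{\varrho}}^-(k)$. Indeed, if some $\hat{\varrho} \cdot w_l \geq k$, then either $\hat{\varrho} \cdot w_l = k$ (a later intersection) or $\hat{\varrho} \cdot w_l \geq k+1$ (in which case some intermediate edge must cross $H_{\hat{\varrho}}(k)$, by linearity of the $\hat{\varrho}$-coordinate along each edge), both contradicting the choice of $x_k$. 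This forces infinitely many vertices of the forward path $\Gamma_{w_{j+1}}$ to lie in $H_{\hat{\varrho}}^-(k)$, contradicting Lemma~\ref{lem: hat_rho_definition}. Hence $a < b$, giving $a < k < b$.

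With $a < b$ in hand, the identification $w_j = x_a(C(0))$ follows by running the same integrality argument ``upward'': for every $l \geq j+1$ one gets $\hat{\varrho} \cdot w_l \geq k+1 > a$, so no vertex or edge of $B(C(0))$ strictly after $w_j$ meets $H_{\hat{\varrho}}(a)$. Since $w_j$ itself is the unique point of $e$ lying on $H_{\hat{\varrho}}(a)$, it is the last intersection of $B(C(0))$ with $H_{\hat{\varrho}}(a)$, so $w_j$ equals $x_n(C(0))$ for $n = a$. The entire conceptual burden is in the previous paragraph's elimination of the downward orientation; once that is handled, the identification of $w_j$ with an $x_n$ is a deterministic bookkeeping fact about piecewise-linear paths that never recross a hyperplane.
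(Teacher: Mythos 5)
Your proposal follows the paper's own proof in all essential respects: you locate the backbone edge whose relative interior contains $x_k(C(0))$, rule out the ``descending'' orientation by combining the fact that $x_k(C(0))$ is the \emph{last} intersection with $H_{\hat{\varrho}}(k)$ with the finiteness statement of Lemma~\ref{lem: hat_rho_definition}, and then identify the tail vertex $w_j$ as the last intersection of $B(C(0))$ with $H_{\hat{\varrho}}(a)$, $a = w_j\cdot\hat{\varrho}$. Your last paragraph makes explicit a bookkeeping step that the paper leaves implicit, which is fine.

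There is, however, one inference that is not valid as written: you deduce $a \neq b$ solely from the facts that $z \mapsto z\cdot\hat{\varrho}$ is affine on $e$ and equals $k$ at the interior point $x_k(C(0))$. Those facts are also consistent with the degenerate case $a = b = k$, in which the entire edge lies inside $H_{\hat{\varrho}}(k)$, so linearity alone gives nothing. This case must be excluded separately, and the exclusion is exactly the first observation in the paper's proof: if the whole edge were contained in $H_{\hat{\varrho}}(k)$, then $w_{j+1}$ (which comes after $x_k(C(0))$ along the backbone) would be a later intersection of $B(C(0))$ with $H_{\hat{\varrho}}(k)$, so $x_k(C(0))$ could not be an interior point of that edge, a contradiction with your standing assumption $x_k(C(0)) \notin \mathbb{Z}^d$. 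With that one sentence inserted, your argument is complete and coincides with the paper's.
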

\begin{proof}
Suppose that $x_k(C(0)) \notin \mathbb{Z}^d$. Let $x$ be the endpoint of the edge of $B(C(0))$ containing $x_k(C(0))$ in its interior such that this edge is directed away from $x$ in $\mathbb{G}$. Note that $x \notin H_{\hat{\varrho}}(k)$, because otherwise the entire edge containing $x_k(C(0))$ would be contained in $H_{\hat{\varrho}}(k)$ and therefore $x_k(C(0))$ (being the last intersection of $B(C(0))$ with $H_{\hat{\varrho}}(k)$) could not be an interior point of the edge. Therefore either $x \in H_{\hat{\varrho}}^+(k)$ or $x \in H_{\hat{\varrho}}^-(k)$. The former case cannot occur because if it did, the edge would cross from $H_{\hat{\varrho}}^+(k)$ to $H_{\hat{\varrho}}^-(k)$, and $x_k(C(0))$ could not be any point of this edge (the backbone would have to cross $H_{\hat{\varrho}}(k)$ again). We conclude that $x \in H_{\hat{\varrho}}^-(k)$, and so $x = x_{x \cdot \hat{\varrho}}(C(0)) \in \mathbb{Z}^d$. 
\end{proof}
Inequality \eqref{eq: step_1_modification_new} follows directly from the the claim after applying a translation. Indeed, the claim implies that a.s., there exists some point in $\mathbb{Z}^d$ that is of the form $x_n(C(0))$. Therefore we can find a deterministic $w \in \mathbb{Z}^d$ and $n \in \mathbb{N}$ such that 
\[
\mu(\#C_0^b = \infty, \hat{\varrho} = \vartheta, w = x_n(C(0)))>0.
\]
Translating by $-w$, we obtain
\[
\mu(\#C_{-w}^b = \infty, \hat{\varrho} = \vartheta, 0 = x_0(C(-w)))>0.
\]
(Here we use that $\hat{\varrho}$ does not change after applying a translation, from \eqref{eq: TI_varrho} and the fact that $\hat{\varrho}$ is $\varrho$-measurable.) This implies \eqref{eq: step_1_modification_new}.

Continuing from \eqref{eq: step_1_modification_new}, we next show that there exist $c_1>0$ and infinitely many $N \in \mathbb{N}$ such that
\begin{equation}\label{eq: step_1_modification}
\mu(\#C_0^b = \infty, \hat{\varrho} = \vartheta,  0 = x_0(C(0)), x_N(C(0)) \in \mathbb{Z}^d) \geq c_1.
\end{equation}
To show this, we observe that Claim~\ref{claim: clam} immediately implies that there exists $C>0$ such that
\begin{equation}\label{eq: step_1_modification_reduction}
\mu\left(\begin{array}{c}
\#C_0^b = \infty, \hat{\varrho} = \vartheta, 0 = x_0(C(0)),\text{ for any }k \in \mathbb{N},~\exists N_k \in \mathbb{N} \\\text{ with } |N_k-k| \leq C \text{ and } x_{N_k}(C(0)) \in \mathbb{Z}^d
\end{array}\right) > 0.
\end{equation}
Therefore, calling this probability $c'$, we obtain for any $k \in \mathbb{N}$,
\begin{align*}
c' &\leq \mu(\#C_0^b = \infty, \hat{\varrho} = \vartheta, 0 = x_0(C(0)),~\exists N_k \in \mathbb{N} \text{ with } |N_k-k| \leq C \text{ and } x_{N_{k}}(C(0)) \in \mathbb{Z}^d) \\
&\leq \sum_{N_{k} : |N_{k}-k| \leq C} \mu(\#C_0^b = \infty, \hat{\varrho} = \vartheta, 0 = x_0(C(0)), x_{N_{k}}(C(0)) \in \mathbb{Z}^d).
\end{align*}
So there exists $N_k$ with $|N_{k}-k| \leq C$ such that
\[
\mu(\#C_0^b = \infty, \hat{\varrho} = \vartheta, 0 = x_0(C(0)), x_{N_{k}}(C(0)) \in \mathbb{Z}^d) \geq \frac{c'}{2C+1}.
\]
Since this holds for all $k \in \mathbb{N}$, it implies \eqref{eq: step_1_modification} with $c_1 = c'/(2C+1)$.

Now that we have established \eqref{eq: step_1_modification}, we need to add more conditions into its associated event. For $M',N \in \mathbb{N}$, $\delta,\epsilon>0$, and $y \in \mathbb{Z}^d \cap H_{\hat{\varrho}}(0)$ with $\|y\|_1 \leq M'$, define $\mathsf{A}_1 = \mathsf{A}_1(M',N,y,\delta,\epsilon)$ as the event that the following occur:
\begin{enumerate}
\item[A1.1] $\#C_0^b = \infty$, $\hat{\varrho} = \vartheta$, $0 = x_0(C(0))$, and $x_N(C(0)) \in \mathbb{Z}^d$;
\item[A1.2] $\Gamma_y$ contains a vertex within $\ell^1$-distance $\epsilon \|x_N(C(0))\|_1$ of $x_N(C(0))$, but $\Gamma_y \cap \Gamma_0 = \emptyset$; and
\item[A1.3] for all vertices $v \in \Gamma_y$ with $\|v-y\|_1 \geq M'$, one has
\[
T(y,v) \leq \|v-y\|_1 (S-\delta).
\]
\end{enumerate}
Here, $S$ is the supremum defined in \eqref{eq: S_def}. We will prove that for some fixed choice of $\delta$, the following holds: for any $\epsilon>0$, there exist $y,M',$ and also $c_2>0$ such that for infinitely many $N$,
\begin{equation}\label{eq: A_1_bound}
\mu(\mathsf{A}_1) \geq c_2.
\end{equation}
See Figure~\ref{fig: A_1} for an illustration of the event $\mathsf{A}_1$.

	\begin{figure}
\hbox{\hspace{1.3cm}\includegraphics[width=0.8\textwidth, trim={0 8cm 3cm 3cm}, clip]{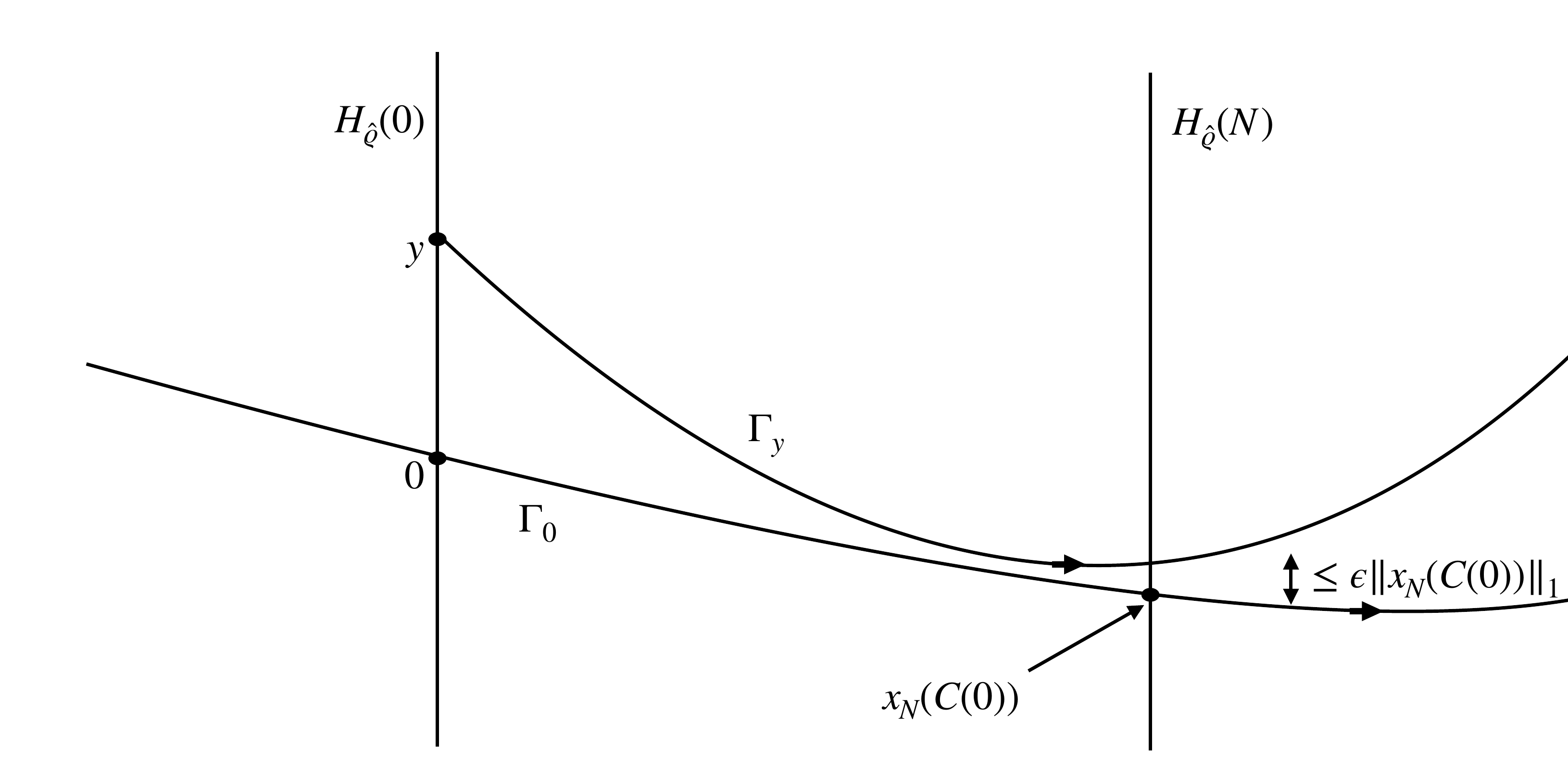}}
  \caption{Depiction of the event $\mathsf{A}_1$. The origin is on the backbone of its own component and is the last intersection point of this backbone with $H_{\hat{\varrho}}(0)$. The geodesic $\Gamma_y$ starting from $y$ comes within distance $\epsilon\|x_N(C(0))\|_1$ of $x_N(C(0))$, but does not intersect $\Gamma_0$. Arrows on the paths indicate their directions.}
  \label{fig: A_1}
\end{figure}

To show \eqref{eq: A_1_bound}, recall Lemma~\ref{lem: Arjun}, which implies that on the event in \eqref{eq: step_1_modification}, the direction $\Theta(C(0))$ exists a.s.. Therefore given $\epsilon>0$, we can pick a deterministic $D \subset \mathbb{R}^d$ with $\ell^1$-diameter at most $\epsilon/2$ and $c_3>0$ such that for infinitely many $N \in \mathbb{N}$,
\begin{equation}\label{eq: quest_bar}
\mu(\#C_0^b = \infty, \hat\varrho = \vartheta, 0 = x_0(C(0)), x_N(C(0)) \in \mathbb{Z}^d, \Theta(C(0)) \in D)\geq c_3.
\end{equation}
(To do this, cover the unit $\ell^1$-ball with finitely many balls of diameter $\epsilon/2$ and choose $D = D_N$ to be so that the probability is at least $c_3$. Then take a subsequence of $N$ such that $D_N$ is constant.) For $y \in \mathbb{Z}^d \cap H_{\hat{\varrho}}(0)$, define $E_y$ as the event that $\#C_y^b = \infty$, $y = x_0(C(y))$, and $\Theta(C(y)) \in D$. Note that $E_0$ contains the event in \eqref{eq: quest_bar} (call it $E_0'$). By translation invariance, $\mu(E_y \text{ occurs for infinitely many } y \in \mathbb{Z}^d \cap H_{\hat{\varrho}}(0) \mid E_0) = 1$, and so $\mu(E_y \text{ occurs for infinitely many }y \in \mathbb{Z}^d \cap H_{\hat{\varrho}}(0) \mid E_0') = 1$. Therefore, we can find $R>0$ such that $\mu(\cup_{0 < \|y\|_1\leq R} E_y \mid E_0') > 1/2$, where the union is over $y \in \mathbb{Z}^d \cap H_{\hat{\varrho}}(0)$. This implies that for infinitely many $N \in \mathbb{N}$,
\[
\mu(\#C_0^b = \infty, \hat\varrho= \vartheta, 0 = x_0(C(0)), x_N(C(0)) \in \mathbb{Z}^d, \Theta(C(0)) \in D, \cup_{0 < \|y\|_1 \leq R}E_y) \geq c_3/2.
\]
By a union bound, there is $c_4>0$ such that for infinitely many $N \in \mathbb{N}$, there exists $y_N \in \mathbb{Z}^d \cap H_{\hat{\varrho}}(0)$ with $0 < \|y_N\|_1 \leq R$ and
\[
\mu(\#C_0^b = \infty, \hat\varrho= \vartheta, 0 = x_0(C(0)), x_N(C(0)) \in \mathbb{Z}^d, \Theta(C(0)) \in D, E_{y_N}) \geq c_4.
\]
By restricting to a subsequence of values of $N$, there is one $y \in \mathbb{Z}^d \cap H_{\hat{\varrho}}(0)$ with $0 < \|y\|_1 \leq R$ such that
\begin{equation}\label{eq: lasagna_supremo}
\mu(\#C_0^b = \infty, \hat\varrho= \vartheta, 0 = x_0(C(0)), x_N(C(0)) \in \mathbb{Z}^d, \Theta(C(0)) \in D, E_y) \geq c_4.
\end{equation}

Define $F = \{\#C_0^b = \infty, \#C_y^b = \infty, \Theta(C(0)) \in D, \Theta(C(y)) \in D\}$, which is a superevent of the event in \eqref{eq: lasagna_supremo}, and note that for any outcome in $F$, we have $\|\Theta(C(0))- \Theta(C(y))\|_1 \leq \epsilon/2$. So for such outcomes, we can choose a (random) $n_0$ such that 
\begin{equation}\label{eq: n_0_equation}
\left\| \frac{y_n(C(0))}{\|y_n(C(0))\|_1} - \frac{y_m(C(y))}{\|y_m(C(y))\|_1}\right\|_1 \leq \epsilon \text{ for } m,n \geq n_0.
\end{equation}
(Here we have enumerated the vertices so that $y_0(C(0)) = 0$ and $y_0(C(y)) = y$.) Last, pick a deterministic $n_0$ such that
\begin{equation}\label{eq: n_0_again}
\mu(\eqref{eq: n_0_equation} \text{ holds } \mid F) \geq 1-c_4/2.
\end{equation}
Then if $A_N$ is the event in \eqref{eq: lasagna_supremo} and $G$ is the event in \eqref{eq: n_0_equation} (with our deterministic $n_0$),
\[
\mu(A_N \cap G) = \mu(A_N \cap G \cap F) = \mu(A_N \cap G \mid F) \mu(F).
\]
Since $\mu(A_N \mid F) \geq \mu(A_N \cap F) = \mu(A_N) \geq c_4$, we obtain from \eqref{eq: n_0_again} that $\mu(A_N \cap G \mid F) \geq c_4/2$. Therefore (for infinitely many $N \in \mathbb{N}$)
\begin{equation}\label{eq: 1_and_2}
\mu(A_N \cap G) \geq (c_4/2)\mu(F) \geq c_4^2/2.
\end{equation}

We will now argue that for $N$ large enough and for any outcome in $A_N \cap G$, condition A1.2 holds. Take such an outcome and note that on this outcome, $y = x_0(C(y))$ and $0 = x_0(C(0))$. However, $y \in B(C(y))$ and $0 \in B(C(0))$, so since $y \neq 0$, $B(C(y)) \neq B(C(0))$ (and in fact they are disjoint). Since $\Gamma_0 \subset B(C(0))$ and $\Gamma_y \subset B(C(y))$, we get $\Gamma_y \cap \Gamma_0 =\emptyset$. Next, for $N$ large, the index $n$ such that $y_n(C(0)) = x_N(C(0))$ satisfies $n \geq n_0$. Furthermore, picking any $m$ such that $\|y_m(C(y))\|_1 = \|x_N(C(0))\|_1$, we obtain for $N$ large that $m \geq n_0$ as well. Since $G$ occurs, we obtain
\[
\left\| \frac{x_N(C(0))}{\|x_N(C(0))\|_1} - \frac{y_m(C(y)))}{\|y_m(C(y))\|_1}\right\|_1 \leq \epsilon.
\]
Therefore $\|x_N(C(0)) - y_m(C(y))\|_1 \leq \epsilon \|x_N(C(0))\|_1$. This completes the proof that $A_N \cap G$ implies condition A1.2.

Last, we need to add condition A1.3 to our events, and it follows directly from the shape theorem. Under our assumption that $S$, defined in \eqref{eq: S_def}, is finite, we can pick $\delta>0$ such that $\mathbb{E}t_e \leq S-2\delta$. (This follows under {\bf A} because the distribution of $t_e$ is continuous and under {\bf B} because if $t_e  =S$ with probability one, then uniqueness of passage times fails.) Combining this with \eqref{eq: g_bound}, we obtain $g(x) \leq (S-2\delta)\|x\|_1$ for all $x \in \mathbb{Z}^d$. By the shape theorem in \eqref{eq: shape_theorem},
\[
T(0,x) \leq \|x\|_1(S - \delta) \text{ for all } x\in \mathbb{Z}^d \text{ with } \|x\|_1 \text{ sufficiently large}
\]
with probability one. Now we choose $M'$ such that $\|y\|_1 \leq M'$ and
\[
T(y,v) \leq (S-\delta)\|y-v\|_1 \text{ for all } v \in \mathbb{Z}^d \text{ with } \|v-y\|_1 \geq M'
\]
has probability $>1-c_4^2/4$. (This is the condition A1.3.) Intersecting this event with the event in \eqref{eq: 1_and_2}, we obtain \eqref{eq: A_1_bound} for $c_2=c_4^2/4$.

To add the last conditions into our events, we finally define $\mathsf{A}_2 = \mathsf{A}_2(M,M',N,y,\delta,\epsilon)$ (for $M',N,y,\delta,\epsilon$ as in the definition of $\mathsf{A}_1$, but also with an integer $M >0$) as the event that the following occur:
\begin{enumerate}
\item $\mathsf{A}_1$, and
\item $\Gamma_z \cap \Gamma_0 = \emptyset$ for any $z \in \mathbb{Z}^d$ which is an endpoint of an edge that contains a point $w \in \mathbb{R}^d$ with
\begin{enumerate}
\item $w \in H_{\hat{\varrho}}(0)$ and $\|w\|_1 \geq M'$, or
\item $w \in H_{\hat{\varrho}}(N)$ and $\|w-x_N(C(0))\|_1 \geq M'$, or
\item $0 \leq w\cdot \hat{\varrho} \leq N$ and $w$ has Euclidean distance $\geq M$ from the line through $0$ and $\hat{\varrho}$.
\end{enumerate}
\end{enumerate}
See Figure~\ref{fig: A_2} for an illustration of the event $\mathsf{A}_2$.

	\begin{figure}
\hbox{\hspace{1.3cm}\includegraphics[width=0.8\textwidth, trim={0 7cm 3cm 3cm}, clip]{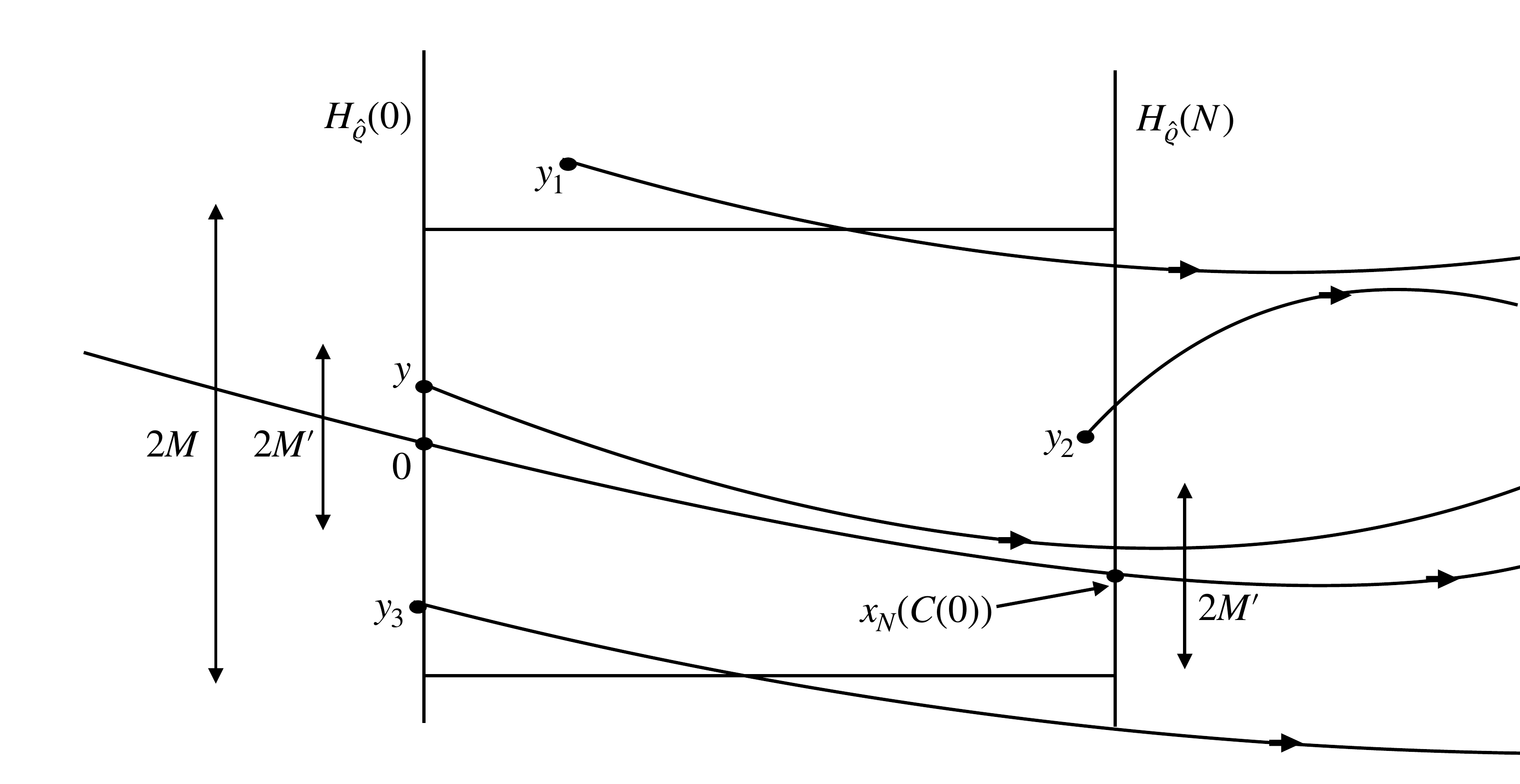}}
  \caption{Depiction of the event $\mathsf{A}_2$. In addition to the conditions of $\mathsf{A}_1$, we also impose items 2(a-c). In the figure, $y$ is within distance $M'$ of the origin. The point $y_3 \in H_{\hat{\varrho}}(0)$ is not, so the path coming from it does not intersect $\Gamma_0$. Similarly, the path coming from $y_2$ intersects $H_{\hat{\varrho}}(N)$ at a distance greater than $M'$ from $x_N(C(0))$, so it does not touch $\Gamma_0$. Last, the path coming from $y_1$ starts outside the set described in 2(c), so it does not intersect $\Gamma_0$ either.}
  \label{fig: A_2}
\end{figure}

We will now show that for some fixed choice of $\delta$, the following holds: for any $\epsilon>0$, there exist $y,M'$ such that for infinitely many $N$ and some choice of $M = M(N)$,
\begin{equation}\label{eq: last_event}
\mu(\mathsf{A}_2) > 0.
\end{equation}
To do this, first fix $\delta$ such that for any $\epsilon>0$, there exist $y,M'$, and $c_2>0$ such that for infinitely many $N$, \eqref{eq: A_1_bound} holds. Next, applying Lemma~\ref{lem: CUNY_lemma}, we can pick $r_0>0$ such that
\begin{equation}\label{eq: quest_supremo}
\sup_n \mu(x_n(C(0)) \in \mathbb{Z}^d, R(x_n(C(0))) \geq r_0, 0 = x_0(C(0))) \leq \frac{c_2}{4}.
\end{equation}
Now given any $\epsilon>0$, we replace $M'$ with $\max\{M',r_0\}$, and note that \eqref{eq: A_1_bound} still holds. We obtain
\[
\text{ for this }M' \text{ and infinitely many } N,~\mu(\mathsf{A}_1, \text{ 2(a), 2(b)}) \geq \frac{c_2}{2},
\]
by intersecting $\mathsf{A}_1$ with the complement of the event in \eqref{eq: quest_supremo} for $n=0$ and $N$. To obtain a lower bound for the probability of this event intersected with 2(c), we simply apply the second statement of Lemma~\ref{lem: finite_intersection_component} over $\alpha \in [0,N]$ for a fixed $N$ to obtain
\[
\text{ for this }M' \text{ and infinitely many } N,~\mu(\mathsf{A}_1, \text{ 2(a), 2(b), 2(c)}) \geq \frac{c_2}{4}
\]
for a large value of $M=M(N)$. This proves \eqref{eq: last_event}.

\subsubsection{Modification argument}\label{sec: modification_argument}
In this section, we modify the weights for edges in a particular subset of the lattice. These edges will be chosen so that after the modification, the backbone of $C(0)$ is severed, and this will create a minimal element in $C(0)$ in a certain lexicographic ordering. In the next section, we will show that no such minimal element can exist, and this will contradict the assumption of existence of the backbone.

To begin, we recall all of the conditions of the event $\mathsf{A}_2$. For a given $\epsilon>0$, $y \in \mathbb{Z}^d \cap H_{\hat{\varrho}}(0)$, integer $M'$ with $\|y\|_1\leq M'$, a number $\delta>0$, and two integers $M,N$,
\begin{enumerate}
\item[A2.1] $\#C_0^b=\infty$, $\hat{\varrho}= \vartheta$, $0 = x_0(C(0))$, and $x_N(C(0)) \in \mathbb{Z}^d$;
\item[A2.2] $\Gamma_y$ contains a vertex within $\ell^1$-distance $\epsilon \|x_N(C(0))\|_1$ of $x_N(C(0))$, but $\Gamma_y \cap \Gamma_0 = \emptyset$; 
\item[A2.3] for all vertices $v \in \Gamma_y$ with $\|v-y\|_1 \geq M'$, one has
\[
T(y,v) \leq \|v-y\|_1 (S-\delta);
\]
\item[A2.4] $\Gamma_z \cap \Gamma_0 = \emptyset$ for any $z \in \mathbb{Z}^d$ which is an endpoint of an edge that contains a point $w \in \mathbb{R}^d$ with
\begin{enumerate}
\item $w \in H_{\hat{\varrho}}(0)$ and $\|w\|_1 \geq M'$, or
\item $w \in H_{\hat{\varrho}}(N)$ and $\|w-x_N(C(0))\|_1 \geq M'$, or
\item $0 \leq w\cdot \hat{\varrho} \leq N$ and $w$ has Euclidean distance $\geq M$ from the line through $0$ and $\hat{\varrho}$.
\end{enumerate}
\end{enumerate}
For any $\epsilon>0$, we fix $y,M'$ such that \eqref{eq: last_event} occurs for infinitely many $N$ and $M = M(N)$ (for the fixed choice of $\delta>0$ there).

Unfortunately we cannot apply the modification lemma directly to the event $\mathsf{A}_2$ because many of its conditions reference $C(0)$, and because $C(0)$ will change after modifying edge-weights, they will not be stable under the modification. Therefore we define a superevent $\mathsf{A}_2'$, which depends on an additional vertex $\xi_N \in \mathbb{Z}^d \cap H_{\vartheta}(N)$, as follows:
\begin{enumerate}
\item[A2'.1] the only $w \in \mathbb{R}^d$ contained in $\Gamma_{\xi_N}$ with $w \cdot \vartheta \in [0,N]$ is $w=\xi_N$;
\item[A2'.2] $\Gamma_y$ contains a vertex within $\ell^1$-distance $\epsilon \|\xi_N\|_1$ of $\xi_N$, but $\Gamma_y \cap \Gamma_{\xi_N} = \emptyset$; 
\item[A2'.3] for all vertices $v \in \Gamma_y$ within $\ell^1$-distance $\epsilon \|\xi_N\|_1$ of $\xi_N$ and with $\|v-y\|_1 \geq M'$, one has
\[
T(y,v) \leq \|v-y\|_1 (S-\delta);
\]
\item[A2'.4] $\Gamma_z \cap \Gamma_{\xi_N} = \emptyset$ for any $z \in \mathbb{Z}^d$ which is an endpoint of an edge that contains a point $w \in \mathbb{R}^d$ with
\begin{enumerate}
\item $w \in H_{\vartheta}(0)$ and $\|w\|_1 \geq M'$, or
\item $w \in H_{\vartheta}(N)$ and $\|w-\xi_N\|_1 \geq M'$, or
\item $0 \leq w\cdot \vartheta \leq N$ and $w$ has Euclidean distance $\geq M$ from the line through $0$ and $\vartheta$.
\end{enumerate}
\end{enumerate}
See Figure~\ref{fig: A_2_prime} for an illustration of the event $\mathsf{A}_2'$.

	\begin{figure}
\hbox{\hspace{1.3cm}\includegraphics[width=0.8\textwidth, trim={0 8cm 3cm 3cm}, clip]{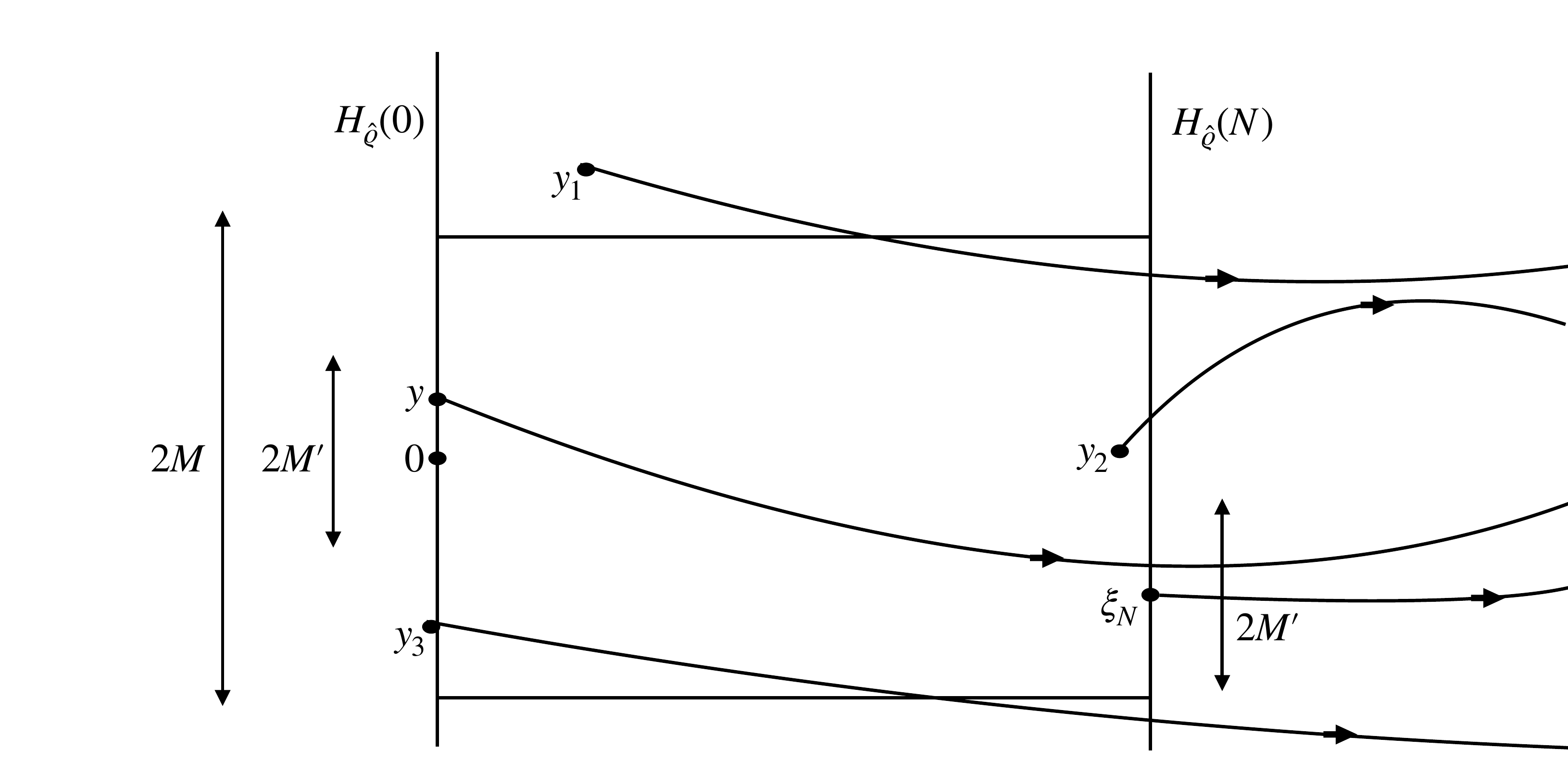}}
  \caption{Depiction of the event $\mathsf{A}_2'$. This figure is the same as Figure~\ref{fig: A_2}, except now there is no reference to the path coming from 0. Instead, what was the point $x_N(C(0))$ is now labeled as $\xi_N$, and the requirement is that $\Gamma_{\xi_N}$ does not re-enter the region to the left of $H_{\hat{\varrho}}(N)$. The paths through the points $y_i$ do not intersect $\Gamma_{\xi_N}$, much like they did not intersect $\Gamma_0$ before.}
  \label{fig: A_2_prime}
\end{figure}

Because $\mathsf{A}_2$ implies $\cup_{\xi_N} \mathsf{A}_2'$, where the union is over $\xi_N \in \mathbb{Z}^d \cap H_{\vartheta}(N)$, a union bound shows that for any $\epsilon>0$, there exist $y,M',\xi_N$ with $\|y\|_1 \leq M'$ such that
\begin{equation}\label{eq: last_event_2}
\mu(\mathsf{A}_2') > 0
\end{equation}
for infinitely many $N$ and $M=M(N)$.

Last, we must define the set of edges whose weights we will increase. We would like to keep all geodesics starting from either $y$ or points $z$ satisfying any of A2'.4(a)-(c). Furthermore, we can only modify weights for a finite set of edges. Therefore we define $\Xi = \Xi(M,N)$ as the set of edges $e$ with both endpoints in 
\[
S(M,N) = \{z : 0 \leq z \cdot \vartheta \leq N, ~z \text{ has Euclidean distance } \leq M \text{ from the line through }0 \text{ and } \vartheta\}
\]
such that both
\begin{enumerate}
\item $e$ is not in $\Gamma_y$ and
\item $e$ is not in $\Gamma_z$ for any $z$ satisfying any of the conditions A2'.4(a)-(c).
\end{enumerate}
%
For $N$ as in \eqref{eq: last_event_2}, we choose a deterministic set of edges $E_N$ with endpoints in $S(M,N)$ such that
\[
\mu(\mathsf{A}_2', \Xi = E_N) > 0.
\]
In the appendix, we will show that for
\begin{equation}\label{eq: lambda_choice}
\lambda = S - \frac{\delta}{2},
\end{equation}
one has
\begin{equation}\label{eq: prefinal}
\mu(\mathsf{A}_2', \Xi = E_N, t_e \geq \lambda \text{ for all } e \in E_N) > 0
\end{equation}
for infinitely many $N$ and $M=M(N)$. This will be proved as Corollary~\ref{cor: our_events_modify}, and will follow from an abstract edge modification argument stated as Theorem~\ref{thm: general_modification} in Appendix~\ref{sec: appendix}.

\subsubsection{Completing the proof}\label{sec: completing}

In this final section, we show that \eqref{eq: prefinal} leads to a contradiction. To this end, we consider an outcome in the event $\mathsf{A}_2' \cap \{\Xi = E_N, t_e \geq \lambda \text{ for all } e \in E_N\}$ and first prove that
\begin{equation}\label{eq: last_claim_1}
\text{for all } z \in \mathbb{Z}^d \text{ with } z \cdot \vartheta \leq 0,~\Gamma_z \cap \Gamma_{\xi_N} = \emptyset.
\end{equation}
In other words, the edge modification we did in the last section removed the backbone of the cluster of $\xi_N$.

	\begin{figure}
\hbox{\hspace{1.3cm}\includegraphics[width=0.8\textwidth, trim={0 8cm 3cm 3cm}, clip]{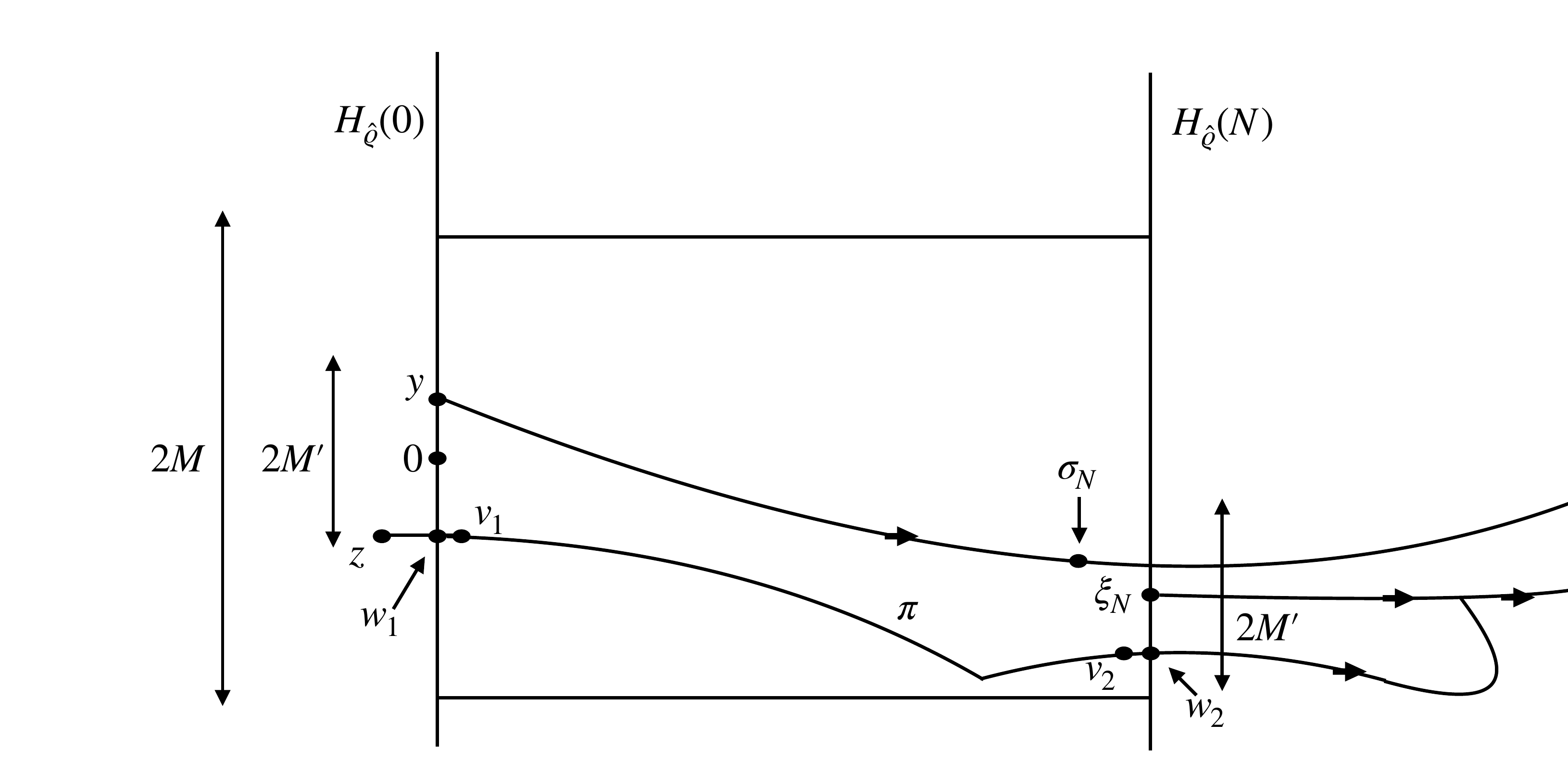}}
  \caption{Depiction of the proof of \eqref{eq: last_claim_1}. The passage time $T(v_1,v_2)$ is bounded from above by moving from $v_1$ to $y$, then to $\sigma_N$, then $\xi_N$, then to $v_2$. If we assume that $\Gamma_z$ intersects $\Gamma_{\xi_N}$, then items A2'.4(a-c) imply that $v_1$ must be close to $y$, and $v_2$ must be close to $\xi_N$. Because $\sigma_N$ is chosen on $\Gamma_y$ to be within distance $\epsilon \|\xi_N\|_1$ of $\xi_N$, these considerations lead to the upper bound for $T(v_1,v_2)$ displayed in \eqref{eq: final_upper_bound}.}
  \label{fig: strip_argument}
\end{figure}

To prove \eqref{eq: last_claim_1}, we argue by contradiction. The reader is encouraged to consult Figure~\ref{fig: strip_argument} throughout the proof. Suppose that $z \in \mathbb{Z}^d$ with $z \cdot \vartheta \leq 0$ has $\Gamma_z \cap \Gamma_{\xi_N} \neq \emptyset$. Because of item A2'.1, $\Gamma_z$ must contain a point whose dot product with $\vartheta$ is $\geq N$. This means that $\Gamma_z$ must cross the strip between $H_{\vartheta}(0)$ and $H_{\vartheta}(N)$, and so we may define $w_1$ as the last intersection (in $\mathbb{R}^d$) with $H_{\vartheta}(0)$ and $w_2$ as the first intersection (in $\mathbb{R}^d$) with $H_{\vartheta}(N)$ after $w_1$. Let $v_1$ be the first vertex of $\Gamma_z$ after $w_1$ and let $v_2$ be the last vertex of $\Gamma_z$ before $w_2$. (Note that $v_1=w_1$ and $v_2=w_2$ may occur.) Note that if $\pi$ is the segment of $\Gamma_z$ from $v_1$ to $v_2$, then
\begin{equation}\label{eq: pi_contain}
\pi \text{ is contained in the strip between } H_{\vartheta}(0) \text{ and } H_{\vartheta}(N).
\end{equation}

First we bound the passage time from $v_1$ to $v_2$ from above. By the triangle inequality,
\[
T(v_1,v_2) \leq T(v_1,y) + T(y,\sigma_N) + T(\sigma_N,\xi_N) + T(\xi_N, v_2).
\]
Here, $\sigma_N$ is a vertex of $\Gamma_y$ that is within $\ell^1$-distance $\epsilon\|\xi_N\|_1$ of $\xi_N$ (as provided by item A2'.2). For large $N$, we use the definition of $S$ and item A2'.3 to obtain the bound
\begin{equation}\label{eq: fancy_1}
T(v_1,v_2) \leq S\left( \|v_1-y\|_1 + \|\sigma_N-\xi_N\|_1 + \|\xi_N-v_2\|_1\right) + (S-\delta)\|\sigma_N-y\|_1.
\end{equation}
We use item A2'.4(a) to estimate
\begin{equation}\label{eq: fancy_2}
\|v_1-y\|_1 \leq \|v_1-w_1\|_1 + \|w_1-y\|_1 \leq d + \|w_1\|_1 + \|y\|_1 \leq d+2M',
\end{equation}
Similarly, by item A2'.4(b), 
\begin{equation}\label{eq: fancy_3}
\|\xi_N-v_2\|_1 \leq \|\xi_N - w_2\|_1 + \|w_2-v_2\|_1 \leq M' + d.
\end{equation}
Last, by the definition of $\sigma_N$,
\begin{equation}\label{eq: fancy_4}
\|\sigma_N - y\|_1 \leq \|\sigma_N - \xi_N\|_1 + \|\xi_N\|_1 + \|y\|_1 \leq (1+\epsilon)\|\xi_N\|_1 + M'.
\end{equation}
Combining \eqref{eq: fancy_2}-\eqref{eq: fancy_4} with the bound $\|\sigma_N- \xi_N\|_1 \leq \epsilon \|\xi_N\|_1$, and placing these in \eqref{eq: fancy_1}, we obtain
\[
T(v_1,v_2) \leq S(d+2M' + \epsilon\|\xi_N\|_1 + M'+d) + (S-\delta)((1+\epsilon)\|\xi_N\|_1 + M')
\]
Choosing $\epsilon$ small enough (recalling that $\delta$ is fixed), we find
\begin{equation}\label{eq: final_upper_bound}
T(v_1,v_2) \leq \left( S - \frac{3\delta}{4}\right) \|\xi_N\|_1,
\end{equation}
as long as $N$ is sufficiently large.

The bound \eqref{eq: final_upper_bound} implies that the edges of $\pi$ cannot be completely contained in $E_N$. If that were the case, then we would have
\[
T(v_1,v_2) \geq \lambda \|v_1-v_2\|_1 = \left( S- \frac{\delta}{2}\right) \|v_1-v_2\|_1,
\]
and by items A2'.4(a,b), we would have
\[
\|v_1-v_2\|_1 \geq \|\xi_N\|_1 - \|\xi_N - w_2\|_1 - \|w_2-v_2\|_1 - \|w_1\|_1 - \|w_1-v_1\|_1 \geq \|\xi_N\|_1 - 2(d+M'),
\]
so
\[
T(v_1,v_2) \geq \left( S - \frac{\delta}{2}\right) \left( \|\xi_N\|_1 - 2(d+M')\right).
\]
This would contradict \eqref{eq: final_upper_bound} for large $N$. We are forced to conclude that $\pi$ (which is constrained by \eqref{eq: pi_contain}) must therefore contain an edge that is not in $E_N$. By the definition of $E_N$, this edge is either (i) in $\Gamma_y$, or (ii) in $\Gamma_z$ for some $z$ satisfying at least one of the conditions A2'.4(a)-(c), or (iii) has an endpoint outside of $S(M,N)$. In case (i), since $\Gamma_y$ does not intersect $\Gamma_{\xi_N}$ by item A2'.2, we obtain a contradiction. In case (ii), we also obtain a contradiction since these $\Gamma_z$'s do not touch $\Gamma_{\xi_N}$. Finally, in case (iii), a contradiction comes from item A2'.4(c). This proves \eqref{eq: last_claim_1}.

Given \eqref{eq: last_claim_1}, we can use the mass transport principle to quickly move to a contradiction. Define a relation $\prec$ on $\mathbb{Z}^d$ as follows: we say $x \prec y$ (``$x$ precedes $y$'') if either (a) $x\cdot \vartheta < y \cdot \vartheta$ or (b) both $x \cdot \vartheta = y \cdot \vartheta$ and $x$ is less than or equal to $y$ in the lexicographic ordering (``dictionary ordering'') of $\mathbb{Z}^d \cap H_{\vartheta}(x \cdot \vartheta)$. Given a component $C$ of $\mathbb{G}$, we say that a vertex $x$ is the {\it progenitor} of $C$ if $x \in C$ and $x \prec y$ for all $y \in C$. Note that, due to \eqref{eq: last_claim_1}, for any outcome in $\mathsf{A}_2' \cap \{\Xi = E_N, t_e > \lambda \text{ for all } e \in E_N\}$, there is a unique progenitor of $C(\xi_N)$. Indeed, each vertex $w$ of $C(\xi_N)$ satisfies $w \cdot \vartheta > 0$, so there is a minimal number $r \in (0,N]$ such that $C(\xi_N)$ shares a vertex with $H_\vartheta(r)$. By item A2'.4(c), there are only finitely many such vertices, so one of them (the progenitor) is minimal in the lexicographic ordering. Motivated by this, we define the following mass transport function $m: \mathbb{Z}^d \times \mathbb{Z}^d \to \{0,1\}$:
\[
m(x,y) = \begin{cases}
1 & \quad \text{if } y \text{ is the unique progenitor of }C(x) \\
0 &\quad \text{otherwise}.
\end{cases}
\]
Similarly to \eqref{eq: mass_transport},
\begin{align}
1 \geq \sum_{x \in \mathbb{Z}^d} \mathbb{E}_\mu m(0,x) &= \sum_{x \in \mathbb{Z}^d} \mathbb{E}_\mu m(x,0) \nonumber \\
&= \mathbb{E}_\mu \left[ \sum_{x \in \mathbb{Z}^d} m(x,0) \mathbf{1}_{\{0 \text{ is the unique progenitor of }C(0)\}}\right]. \label{eq: final_quest}
\end{align}
By \eqref{eq: last_claim_1} and the above discussion,
\[
0 < \mu(\mathsf{A}_2' \cap \{\Xi = E_N, t_e > \lambda \text{ for all } e \in E_N\}) \leq \mu\left( \cup_x \{x \text{ is the unique progenitor of }C(x)\right),
\]
and so a union bound and translation invariance implies that $\mu(0 \text{ is the unique progenitor of }C(0)) > 0$. However, on this event, $\sum_{x \in \mathbb{Z}^d} m(x,0) = \sum_{x \in C(0)}m(x,0) = \infty$, and this contradicts \eqref{eq: final_quest} and shows that \eqref{eq: prefinal} cannot hold. However, \eqref{eq: prefinal} was a consequence of assumption \eqref{eq: main_assumption_for_contradiction}, so we finally conclude that \eqref{eq: main_assumption_for_contradiction} is false; that is, Theorem~\ref{thm: main_thm_restated} holds when $S<\infty$.

\subsection{Unbounded case}\label{sec: unbounded_case}

The unbounded case is actually easier than the bounded case so, in this section, we briefly indicate the (minimal) changes needed to cover it. We henceforth assume that $S$, the supremum of the support of the distribution of $t_e$, satisfies
\[
S=\infty.
\]
In this case, we proceed through the proof as before, but we change condition A2'.3 (and also A2.3) to:
\begin{enumerate}
\item[A2'.3'] for any vertices $v,w$ adjacent to both a vertex of $S(M,N)$ and a vertex of $S(M,N)^c$, there is a path connecting $v$ to $w$ which does not use edges that have both endpoints in $S(M,N)$, and which has passage time at most $\mathsf{C}$.
\end{enumerate}
By choosing the constant $\mathsf{C}$ large enough, depending on $M$ and $N$, it follows that $\mathsf{A}_2'$ satisfies the same statement as before: for $\epsilon>0$, there exist $y, M', \xi_N, \mathsf{C} = \mathsf{C}(M,N)$ with $\|y\|_1 \leq M'$ such that $\mu(\mathsf{A}_2') > 0$ for infinitely many $N$ and $M = M(N)$.

We continue through the proof until \eqref{eq: prefinal}, at which point we choose
\[
\lambda \text{ arbitrary instead of equal to } S - \frac{\delta}{2}.
\]
Now a version of Corollary~\ref{cor: our_events_modify} applies for all $\lambda$, since $\mathbb{P}(t_e> \lambda)>0$, and our new condition A2'.3' does not introduce complications (it involves passage times of paths in the complement of $S(M,N)$), so we can again conclude that
\[
\mu(\mathsf{A}_2', \Xi = E_N, t_e \geq \lambda \text{ for all } e \in E_N) > 0
\]
for infinitely many $N$ and $M = M(N)$, regardless of the value of $\lambda$. Thus we can choose $\lambda = \mathsf{C}+1$.

To obtain a contradiction, we follow Section~\ref{sec: completing}. After establishing claim \eqref{eq: last_claim_1}, the mass transport argument that completes the proof is the same as in the case $S<\infty$, so we focus on that claim. Defining $v_1$ and $v_2$ as before, note that both of these vertices are adjacent to a vertex of $S(M,N)$ and a vertex of $S(M,N)^c$, so item A2'.3' above implies
\[
T(v_1,v_2) \leq \mathsf{C}.
\]
This bound replaces \eqref{eq: final_upper_bound}. Next observe that, in this case, $\pi$ cannot contain {\it any} edges of $E_N$, since then we would have $T(v_1,v_2) =T(\pi) \geq \lambda = \mathsf{C}+1> \mathsf{C}$, a contradiction. Because $\pi$ must contain an edge that is not in $E_N$, we obtain a contradiction as before (following the last half of the paragraph below \eqref{eq: final_upper_bound}), and this shows \eqref{eq: last_claim_1}. This completes the sketch.

\appendix
\section{Modification lemma}\label{sec: appendix}


In this section, we give a general modification result which states that events $A \subset \Omega_1 \times \Omega_3$ which can be approximated in a certain sense by increasing cylinder events are stable under upward edge-modification. After, in Corollary~\ref{cor: our_events_modify}, we apply this to the events from Section~\ref{sec: modification_argument} to conclude inequality \eqref{eq: prefinal}.

We operate in the general setting of our geodesic measure $\mu$ on $\widetilde \Omega$; we write $(n_k)$ for the sequence such that $\mu_{n_k}^* \to \mu$, and recall that $S$ is the supremum of the support of $t_e$. (We allow for the possibility that $S=\infty$ in Theorem~\ref{thm: general_modification}.) We will think of events in the Borel sigma-algebra of $\Omega_1 \times \Omega_3$ as being on the full space $\widetilde \Omega$ by allowing the coordinate on $\Omega_2$ to be free. For the general result, we need some definitions. Recall that $\eta_\alpha$ refers to the third coordinate of the map $\Phi_\alpha$, from Section~\ref{sec: construction}.
\begin{definition}
\begin{enumerate}
\item We say that a Borel measurable $A \subset \Omega_1 \times \Omega_3$ is a graph-cylinder event if it is a finite union of events of the form $U \times V$, where $U \subset \Omega_1$ is Borel measurable and $V \subset \Omega_3$ is a cylinder event:
\[
V = \{\eta(e_1) = a_1, \dots, \eta(e_n) = a_n\}
\]
for fixed directed edges $e_i \in \vec{\mathcal{E}}^d$ and $a_i \in \{0,1\}$.
\item Given an edge $e_0 \in \mathcal{E}^d$, we say that a Borel measurable $A \subset \Omega_1 \times \Omega_3$ is $e_0$-approximable (for $\mu$) if there exists a sequence $(A_n)$ of graph-cylinder events such that
\[
\mu(A_n \Delta A) \to 0 \text{ as } n \to \infty
\]
and for each $n$, there exists $\alpha_0 = \alpha_0(n,e_0,A)$ such that if $\alpha \geq \alpha_0$ and $(t_e) \in \Omega_1$, then
\begin{equation}\label{eq: raise_it_up}
((t_e),\eta_\alpha((t_e))) \in A_n \text{ implies } ((t_e'),\eta_\alpha(t_e')) \in A_n
\end{equation}
whenever $(t_e') \in \Omega_1$ satisfies
\[
t_e' ~~~\begin{cases}
\geq t_{e_0} & \quad \text{if } e = e_0 \\
= t_e & \quad \text{if } e \neq e_0.
\end{cases}
\]
\end{enumerate}
\end{definition}

The main result states that $e_0$-approximable events have positive-probability upward modifications.
\begin{theorem}\label{thm: general_modification}
Let $e_0 \in \mathcal{E}^d$ and $\lambda \in [0,S)$. For any $r > 0$, there exists $s> 0$ such that if $A$ is an $e_0$-approximable event with $\mu(A) \geq r$, then
\[
\mu(A, t_{e_0} \geq \lambda) \geq s.
\]
\end{theorem}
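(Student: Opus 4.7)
My plan is to transfer the question from the subsequential limit $\mu$ down to the underlying edge-weight measure $\mathbb{P}$ via the averaged measures $\mu_{n_k}^*$, perform a single-edge modification at the $\mathbb{P}$-level, and then pass back up to $\mu$.

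Given $r>0$ and a small auxiliary $\varepsilon>0$, I would first use $e_0$-approximability to fix $n$ with $\mu(A_n\Delta A)<\varepsilon$ and upward stability \eqref{eq: raise_it_up} holding for all $\alpha\geq\alpha_0$. Because the marginal of $\mu$ on $\Omega_1$ is $\mathbb{P}$ and the $\Omega_3$-cylinder factors of $A_n$ are clopen, the regularity of $\mathbb{P}$ permits a further refinement of $A_n$ (preserving upward stability, up to an additional $\varepsilon$ loss in $\mu$-measure) into a continuity set of $\mu$. Weak convergence along $(n_k)$ then yields $\mu_{n_k}^*(A_n)\geq r-3\varepsilon$ for large $k$, and expanding
\[
\mu_{n_k}^*(A_n)\;=\;\frac{1}{n_k}\int_0^{n_k}\mathbb{P}\!\left(\Phi_\alpha^{-1}(A_n)\right) d\alpha,
\]
a pigeonhole argument shows the set $G_k:=\{\alpha\in[\alpha_0,n_k]:\mathbb{P}(\Phi_\alpha^{-1}(A_n))\geq r/2\}$ has Lebesgue measure at least $c(r)\,n_k$ for some $c(r)>0$.

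Next, I would perform the modification at the $\mathbb{P}$-level. Fix $\alpha\in G_k$ and set $B_\alpha:=\Phi_\alpha^{-1}(A_n)\subset\Omega_1$. Conditioning on $(t_f)_{f\neq e_0}$, upward stability forces the conditional cross-section of $B_\alpha$ in the $t_{e_0}$-coordinate to be an upward half-line $[T^*,\infty)$ for some measurable $T^*\in[0,\infty]$. Writing $\nu$ for the conditional law of $t_{e_0}$ given $(t_f)_{f\neq e_0}$, the trivial inequality
\[
\nu\!\left([T^*\vee\lambda,\infty)\right)\;\geq\;\nu\!\left([T^*,\infty)\right)\cdot\nu\!\left([\lambda,\infty)\right)
\]
(one of the two factors on the right is always at most $1$) integrates to
\[
\mathbb{P}(B_\alpha,\,t_{e_0}\geq\lambda)\;\geq\;\mathbb{E}\!\left[\mathbb{P}(B_\alpha\mid(t_f)_{f\neq e_0})\cdot\mathbb{P}(t_{e_0}\geq\lambda\mid(t_f)_{f\neq e_0})\right].
\]
Under assumption \textbf{A}, independence and continuity of $t_{e_0}$ reduce the second conditional factor to the positive constant $p:=\mathbb{P}(t_{e_0}\geq\lambda)>0$ (using $\lambda<S$), yielding $\mathbb{P}(B_\alpha,t_{e_0}\geq\lambda)\geq p\cdot\mathbb{P}(B_\alpha)\geq pr/2$. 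Under \textbf{B}, the upward finite energy property \eqref{eq: upward_finite_energy} gives positivity only a.s., so I would truncate to the event $\{\mathbb{P}(t_{e_0}\geq\lambda\mid(t_f)_{f\neq e_0})\geq\varepsilon_0\}$, choosing $\varepsilon_0>0$ so that its complement has $\mathbb{P}$-probability less than $r/4$, recovering a comparable bound of the form $\mathbb{P}(B_\alpha,t_{e_0}\geq\lambda)\geq\varepsilon_0 r/4$ for $\alpha\in G_k$.

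Finally, averaging this bound over $\alpha\in G_k$ gives $\mu_{n_k}^*(A_n,t_{e_0}\geq\lambda)\geq s'$ for a constant $s'>0$ depending only on $r$, $\lambda$, and the distribution of $t_e$. Since $A_n\cap\{t_{e_0}\geq\lambda\}$ inherits the continuity-set property of $A_n$ (modulo the $\mathbb{P}$-negligible slice $\{t_{e_0}=\lambda\}$, handled by a small perturbation of $\lambda$ if necessary), portmanteau lifts this to $\mu(A_n,t_{e_0}\geq\lambda)\geq s'$, and combining with $\mu(A\Delta A_n)<\varepsilon$ yields the desired bound $\mu(A,t_{e_0}\geq\lambda)\geq s$ for a suitable $s=s(r)>0$. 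The main obstacles I anticipate are (i) refining $A_n$ into a continuity set of $\mu$ without destroying its upward-stability property, which relies on using the flexibility in the $e_0$-approximability definition together with the regularity of $\mathbb{P}$, and (ii) the absence of a uniform lower bound on $\mathbb{P}(t_{e_0}\geq\lambda\mid(t_f)_{f\neq e_0})$ under assumption \textbf{B}, addressed by the truncation argument above.
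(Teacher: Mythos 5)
Your overall architecture coincides with the paper's proof: approximate $A$ by graph-cylinder events, transfer to the averaged measures $\mu_{n_k}^*$, pigeonhole in $\alpha$ to find a positive fraction of $\alpha\in[\alpha_0,n_k]$ with $\mu_\alpha(A_n)\geq r/2$, perform the single-edge upward modification at the $\mathbb{P}$-level using that $\Phi_\alpha^{-1}(A_n)$ is $e_0$-increasing for $\alpha\geq\alpha_0$, and then lift the bound back to $\mu$. Your conditional cross-section argument (half-line sections, the inequality for $\nu([T^*\vee\lambda,\infty))$, and the truncation under {\bf B} via upward finite energy) is a correct reproof of the modification estimate that the paper simply quotes from \cite[Lem.~6.6]{DH14}, so that part is fine.

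The genuine gap is the step you yourself flag: the ``further refinement of $A_n$ into a continuity set of $\mu$ preserving upward stability.'' Neither regularity of $\mathbb{P}$ nor the flexibility in the definition of $e_0$-approximability gives this. Upward stability \eqref{eq: raise_it_up} is a property of the composite event $\{((t_e),\eta_\alpha((t_e)))\in A_n\}$ in which the $\Omega_1$-part and the graph part interact (raising $t_{e_0}$ changes $\eta_\alpha$ as well), and replacing the Borel $\Omega_1$-factors of $A_n$ by, say, finite unions of rectangles with $\mathbb{P}$-null boundary can destroy the $e_0$-increasing property of the pullback --- the new rectangles may for instance cap $t_{e_0}$ from above. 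As written, the event you feed into the cross-section argument therefore need not have upward half-line sections, and the chain breaks exactly where you predicted. There are two repairs. The paper's: do not modify $A_n$ at all, and instead prove that $\mu_{n_k}^*(B)\to\mu(B)$ for \emph{every} graph-cylinder event $B$, with arbitrary Borel $\Omega_1$-part (this is \eqref{eq: cylinder_convergence}, also applied to $A_n\cap\{t_{e_0}\geq\lambda\}$ at the end); the proof is a $\pi$-$\lambda$ plus Portmanteau argument exploiting that the $\Omega_1$-marginal of every $\mu_{n_k}^*$, and of $\mu$, is exactly $\mathbb{P}$, together with atomlessness of the weight distribution (immediate under {\bf A}, and under {\bf B} because an atom would contradict unique passage times). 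Alternatively, keep two events: the original $A_n$ for the modification step (it is the one with $e_0$-increasing pullbacks), and your refined continuity set $A_n'$ only in the weak-convergence estimates; since $A_n\Delta A_n'$ is contained in a set depending only on the $\Omega_1$-coordinate and of $\mathbb{P}$-measure less than $\varepsilon$, and all of $\mu_\alpha$, $\mu_{n_k}^*$, $\mu$ share the marginal $\mathbb{P}$ on $\Omega_1$, the resulting error is uniform in $\alpha$ and $k$ and your inequalities close. With either fix the proposal becomes a complete proof; without one, the refinement claim is unsupported.
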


\begin{proof}
We will need to approximate $A$ by graph-cylinder events, pull them back to $\Omega_1$ through the map $\Phi_\alpha$, and then perform the modification to the pull-back. Then we will push the modification back to $\Omega_1 \times \Omega_3$. To do this, we first must check: 
\begin{equation}\label{eq: cylinder_convergence}
\text{if }B \text{ is a graph-cylinder event then } \mu_{n_k}^\ast(B) \to \mu(B) \text{ as } k \to \infty.
\end{equation}
Note that there is no requirement for the first coordinate of the graph-cylinder event to lie in a cylinder set. We will be able to prove \eqref{eq: cylinder_convergence} at that level of generality because the marginal of $\mu_n^*$ on $\Omega_1$ is constant in $n$. 

Because graph-cylinder events can be written as a finite disjoint union of events of the form $U \times V$, where $U \subset \Omega_1$ is Borel measurable and $V \subset \Omega_3$ is a cylinder event, it suffices to check \eqref{eq: cylinder_convergence} for one such event. We will use the $\pi$-$\lambda$ theorem (see \cite[Sec.~2.1.1]{durrett}) and accordingly, we define, for our fixed cylinder $V$, the two collections
\begin{align*}
\Pi =~& \left\{ U \subset \Omega_1: U \text{ of the form } \{t_{e_i} \in [a_i,b_i] \text{ for } i = 1, \dots, n\}\right\}  \text{ and}\\
\Lambda =~& \left\{ U \subset \Omega_1 : \mu_{n_k}^*(U\times V) \to \mu(U\times V) \text{ as } k \to \infty\right\}.
\end{align*}
The sets in $\Pi$ are cylinder events that are induced by finite-dimensional closed rectangles; note that $\Pi$ generates the Borel sigma-algebra on $\Omega_1$. Furthermore, it is plain that $\Pi$ contains the empty set and is closed under finite intersections, so it is a $\pi$-system. Next, we check that $\Pi \subset \Lambda$, so let $U \in \Pi$. Since $\mu_{n_k}^* \to \mu$ weakly, we can invoke the Portmanteau theorem: we need only show that $\mu(\partial (U \times V)) = 0$, where $\partial (U\times V)$ is the metric boundary of $U\times V$ when viewing $\Omega_1 \times \Omega_3$ as a metric space. Because $\partial(U \times V) \subset \left( \partial U \times \Omega_3\right) \cup \left( \Omega_1 \times \partial V\right)$ and $V$ has empty boundary,
\[
\mu(\partial (U\times V)) \leq \mu(\partial U \times \Omega_3) = \mathbb{P}(\partial U) = \mathbb{P}(t_e = a_i \text{ or } b_i \text{ for some }i).
\]
Under {\bf A}, the weights are continuously distributed, so this probability is zero. Under {\bf B}, if there is $a \in \mathbb{R}$ such that $\mathbb{P}(t_e=a)>0$, then by translation invariance, there are two distinct edges $e$ and $f$ such that $\mathbb{P}(t_e=a \text{ and } t_f=a)>0$, and this contradicts uniqueness of passage times. Therefore $\mu(\partial (U\times V)) = 0$ and by the Portmanteau theorem, $U \in \Lambda$; that is, $\Pi \subset \Lambda$.

Next we prove that $\Lambda$ is a $\lambda$-system. It is plain that it contains the empty set and is closed under complements. Suppose that $(U_j)$ is a sequence of disjoint elements of $\Lambda$; we will prove that $U:=\cup_j U_j \in \Lambda$. For $\epsilon>0$, choose $J$ such that $\sum_{j =J+1}^\infty \mathbb{P}(U_j) < \epsilon/3$. Then letting $U^{(J)} = \cup_{j=1}^J U_j$ and $\bar{S} = S \times V$ for $S \subset \Omega_1$, we have
\begin{align*}
|\mu(\bar U)-\mu_{n_k}^*(\bar U)| &\leq |\mu(\bar U)-\mu(\bar{U}^{(J)})| + |\mu(\bar{U}^{(J)})-\mu_{n_k}^\ast(\bar{U}^{(J)})| + |\mu_{n_k}^\ast(\bar{U}^{(J)}) - \mu_{n_k}^\ast(\bar{U})| \\
&= \mu(\bar{U} \setminus \bar{U}^{(J)})+ |\mu(\bar{U}^{(J)}) - \mu_{n_k}^\ast(\bar{U}^{(J)})| + \mu_{n_k}^\ast(\bar{U} \setminus \bar{U}^{(J)} ) \\
&\leq 2\sum_{j=J+1}^\infty \mathbb{P}(U_j) + \sum_{j=1}^J |\mu(\bar{U_j}) - \mu_{n_k}^\ast(\bar{U_j})| \\
&\leq \frac{2}{3} \epsilon + \sum_{j=1}^J |\mu(\bar{U_j}) - \mu_{n_k}^\ast(\bar{U_j})|.
\end{align*}
Since $U_j \in \Lambda$ for each $j$, this is less than $\epsilon$ if $k$ is sufficiently large. We conclude that
\[
|\mu(U\times V) - \mu_{n_k}^\ast(U\times V)| = |\mu(\bar{U}) - \mu_{n_k}^\ast(\bar{U})| \to 0 \text{ as } k \to \infty;
\]
that is, $U \in \Lambda$. This implies that $\Lambda$ is a $\lambda$-system. By the $\pi$-$\lambda$ theorem, $\Lambda$ contains the sigma-algebra generated by $\Pi$, which means that $\mu_{n_k}^\ast(U\times V) \to \mu(U\times V)$ for all Borel $U\subset \Omega_1$ and cylinder $V \subset \Omega_3$. As we saw above, this implies \eqref{eq: cylinder_convergence}.

Now that we established \eqref{eq: cylinder_convergence}, we can apply results of \cite{DH14} to complete the proof of Theorem~\ref{thm: general_modification}. Let $A$ be $e_0$-approximable with $\mu(A) \geq r$ and let $(A_n)$ be the corresponding sequence of graph-cylinder events. Then write for any $n$ and $k$
\begin{align}
\mu(A,t_{e_0} \geq \lambda) &\geq \mu(A_n,t_{e_0} \geq \lambda) - \mu(A\Delta A_n) \nonumber \\
&\geq \mu_{n_k}^\ast(A_n,t_{e_0} \geq \lambda) - |\mu(A_n,t_{e_0}\geq \lambda) - \mu_{n_k}^\ast(A_n,t_{e_0} \geq \lambda)| - \mu(A\Delta A_n). \label{eq: step_1_approximate}
\end{align}
The definition of $\mu_{n_k}^\ast$ gives
\begin{equation}
\mu_{n_k}^\ast(A_n,t_{e_0} \geq \lambda) = \frac{1}{n_k} \int_0^{n_k} \mu_\alpha(A_n,t_{e_0} \geq \lambda)~\text{d}\alpha \label{eq: integral_end} \\
\end{equation}
To estimate the integrand, which equals $\mathbb{P}(((t_e),\eta_\alpha((t_e))) \in A_n, t_{e_0} \geq \lambda)$, we use the modification result of \cite[Lem.~6.6]{DH14}. It states that under assumption {\bf A} or {\bf B}, for each $r> 0$, there exists $c = c(r)$ such that for all $e_0$-increasing events $A_0 \subset \Omega_1$ with $\mathbb{P}(A_0) \geq r/2$, one has $\mathbb{P}(A_0, t_{e_0} \geq \lambda) \geq c\mathbb{P}(A_0)$. (Here, $e_0$-increasing means that if $(t_e) \in A_0$ and $(t_e')$ is a configuration agreeing with $(t_e)$ off of $e_0$ and satisfying $t_{e_0}' \geq t_{e_0}$, then $(t_e') \in A_0$.) By the definition of $e_0$-approximable, the event $\{((t_e),\eta_\alpha((t_e))) \in A_n\}$ is $e_0$-increasing for $\alpha \geq \alpha_0$. Therefore
\begin{equation}\label{eq: cheesy_gordita}
\mu_\alpha(A_n) \geq \frac{r}{2} \text{ implies } \mu_\alpha(A_n,t_{e_0} \geq \lambda) \geq c\mu_\alpha(A_n) \text{ if } \alpha \geq \alpha_0.
\end{equation}

We last need to estimate the set of $\alpha$ such that the assumption of \eqref{eq: cheesy_gordita} holds. Because $\mu(A) \geq r$ and $\mu(A \Delta A_n) \to 0$ as $n \to \infty$, one has $\mu(A_n) \geq 7r/8$ for all $n$ greater than some $n_0$. Because $A_n$ is a graph-cylinder event, \eqref{eq: cylinder_convergence} gives $\mu_{n_k}^\ast(A_n) \to \mu(A_n)$ as $k \to \infty$, so $\mu_{n_k}^\ast(A_n) \geq 3r/4$ for all $n \geq n_0$ and for all $k$ greater than some $k_0 = k_0(n)$. The definition of $\mu_{n_k}^\ast$ then entails that if $n \geq n_0$ and $k \geq k_0$, then there is a set $S_{k,n}$ of Lebesgue measure at least $r n_k/4$ such that
\[
\alpha \in S_{k,n} \text{ implies } \mu_\alpha(A_n) \geq \frac{r}{2}.
\]
Combining this with \eqref{eq: cheesy_gordita}, we obtain for $n \geq n_0$ and $k \geq k_0$,
\[
\alpha \in S_{k,n} \text{ and } \alpha \geq \alpha_0 \text{ implies } \mu_\alpha(A_n, t_{e_0} \geq \lambda) \geq \frac{cr}{2}.
\]
Putting this back in \eqref{eq: integral_end}, for $n \geq n_0$ and $k \geq k_0$,
\[
\mu_{n_k}^\ast(A_n, t_{e_0} \geq \lambda) \geq \frac{cr}{2} \cdot \frac{1}{n_k} Leb(S_{k,n} \cap [\alpha_0,n_k]) \geq \frac{cr}{2} \left(\frac{r}{4} - \frac{\alpha_0}{n_k}\right).
\]
Last, we use this with \eqref{eq: step_1_approximate} to obtain for $n \geq n_0$ and $k \geq k_0$
\[
\mu(A,t_{e_0} \geq \lambda) \geq \frac{cr}{2}\left( \frac{r}{4} - \frac{\alpha}{n_k}\right)  - |\mu(A_n,t_{e_0}\geq \lambda) - \mu_{n_k}^\ast(A_n,t_{e_0} \geq \lambda)| - \mu(A\Delta A_n).
\]
Let $k \to \infty$ for fixed $n \geq n_0$ and use the fact that $A_n \cap \{t_{e_0} \geq \lambda\}$ is a graph-cylinder event (along with \eqref{eq: cylinder_convergence}) to find
\[
\mu(A,t_{e_0} \geq \lambda) \geq \frac{cr^2}{8} - \mu(A\Delta A_n).
\]
Last, let $n \to \infty$ to get $\mu(A,t_{e_0} \geq \lambda) \geq cr^2/8$. Putting $s = cr^2/8$ completes the proof of Theorem~\ref{thm: general_modification}.
\end{proof}

Having proved the general modification result, we move to proving inequality \eqref{eq: prefinal} from Section~\ref{sec: modification_argument}. We will assume throughout that $S<\infty$ for simplicity; similar arguments cover the case when $S=\infty$. 
\begin{corollary}\label{cor: our_events_modify}
With the notations of Section~\ref{sec: modification_argument}, for infinitely many $N$ and $M = M(N)$,
\[
\mu(\mathsf{A}_2', \Xi = E_N, t_e \geq \lambda \text{ for all } e \in E_N) > 0.
\]
\end{corollary}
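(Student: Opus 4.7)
The plan is to apply Theorem~\ref{thm: general_modification} repeatedly, once for each edge of the finite set $E_N$, raising its weight to $\lambda$ one edge at a time. Enumerate $E_N = \{f_1,\dots,f_m\}$ and set $A_0 = \mathsf{A}_2' \cap \{\Xi = E_N\}$ and $A_j = A_{j-1} \cap \{t_{f_j} \geq \lambda\}$. We have $\mu(A_0) > 0$ from the choice of $E_N$ immediately preceding \eqref{eq: prefinal}. Assuming inductively that $\mu(A_{j-1}) > 0$, I would apply Theorem~\ref{thm: general_modification} with $e_0 = f_j$ and $r = \mu(A_{j-1})$ to conclude $\mu(A_j) > 0$, provided $A_{j-1}$ is $f_j$-approximable. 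After $m$ applications this yields $\mu(A_m) > 0$, which is the statement of the corollary.

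Next I would verify $f_j$-approximability. The event $A_{j-1}$ has two kinds of conditions: edge-weight conditions (A2'.3 together with $t_{f_i} \geq \lambda$ for $i<j$), and conditions on finitely many geodesic rays of $\mathbb{G}$ --- namely $\Gamma_{\xi_N}$, $\Gamma_y$, the $\Gamma_z$ referenced in A2'.4, and the rays that certify $\Xi = E_N$. Each ray-based condition is decided by a bounded initial segment (A2'.1 asks $\Gamma_{\xi_N}$ to escape the strip $\{0 \leq w\cdot \vartheta \leq N\}$; A2'.2 asks $\Gamma_y$ to reach a bounded ball near $\xi_N$; A2'.4 is disjointness inside a bounded region; $\{\Xi = E_N\}$ inspects only edges of the finite set $S(M,N)$). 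I would therefore take the approximant $A_n$ to be the disjoint union, over all admissible choices of initial $n$-edge path-segments $\pi$ that certify the relevant conditions, of the cylinder events that pin these initial segments of the rays. The convergence $\mu(A_n \Delta A_0) \to 0$ would follow from the existence of asymptotic ray directions (Lemmas~\ref{lem: hat_rho_definition} and~\ref{lem: Arjun}) together with Lemma~\ref{lem: finite_intersection_component}, which together force $\mu$-a.s. stabilization of the rays outside a bounded window. Since $f_j \in \Xi$ holds throughout $A_{j-1}$, every segment $\pi$ appearing in the disjoint union avoids $f_j$. Under the map $\Phi_\alpha$, for $\alpha$ large enough that the finite-$\alpha$ geodesics from the anchoring vertices agree with the pinned segments in the window of interest, raising $t_{f_j}$ cannot shorten any path that already avoids $f_j$; hence uniqueness of passage times (under \textbf{A} or \textbf{B}) forces each relevant finite-$\alpha$ geodesic to keep the same initial segment. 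This preserves the cylinder conditions in $A_n$; condition A2'.3 is preserved because $T(y,v)$ is realized along $\Gamma_y$, which avoids $f_j$; and the conditions $t_{f_i} \geq \lambda$ for $i<j$ involve edges distinct from $f_j$. Theorem~\ref{thm: general_modification} then delivers $\mu(A_j) > 0$, closing the induction.

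The main obstacle is the treatment of condition A2'.4, whose quantifier ranges over an unbounded collection of vertices $z$ --- each giving an infinite ray $\Gamma_z$ --- so the approximation has to control not only the finitely many rays that certify $\Xi = E_N$ inside $S(M,N)$ but also the possibility that a ray initiated far from $S(M,N)$ re-enters the bounded window and perturbs $\Xi$. Closing this point requires combining Lemma~\ref{lem: finite_intersection_component} (so that components of $\mathbb{G}$ touching $S(M,N)$ have bounded intersection with each hyperplane) with the directedness of Lemma~\ref{lem: hat_rho_definition} (so that rays anchored far away in $H_\vartheta(0)$ or $H_\vartheta(N)$ cannot re-enter $S(M,N)$) in order to truncate the relevant rays at a common finite radius $n$ uniformly. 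This is the main technical step; it parallels the analogous reasoning in \cite{DH14}.
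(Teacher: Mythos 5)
Your overall architecture is the same as the paper's: modify the edges of $E_N$ one at a time by induction, and for each step verify $e_0$-approximability so that Theorem~\ref{thm: general_modification} applies; the unbounded quantifier in A2'.4 is likewise handled in the paper by truncating to $z \in [-k,k]^d$ (the events $\mathsf{A}_{2,k}'$, $\Xi_k$) and using that $\Xi_k \downarrow \Xi$ inside a finite edge set. However, there is a genuine gap in your verification of the stability requirement \eqref{eq: raise_it_up}. Your approximants pin only the initial segments of the relevant rays inside a bounded window, and you argue that since those pinned segments avoid $f_j$, raising $t_{f_j}$ ``cannot shorten any path that already avoids $f_j$,'' so each finite-$\alpha$ geodesic keeps its initial segment. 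This does not follow: under $\mu_\alpha$ the rays are geodesics from $z$ to the far hyperplane $H_\rho(\alpha)$, and such a geodesic can agree with the pinned segment inside the window and still use $f_j$ after exiting it. In that case raising $t_{f_j}$ can reroute the entire geodesic, including its initial segment, so the cylinder conditions in your $A_n$ are not preserved; note also that \eqref{eq: raise_it_up} must hold for \emph{every} weight configuration in the cylinder event and all $\alpha \geq \alpha_0$, so you cannot dispose of this by choosing $\alpha$ ``large enough that the geodesics agree with the pinned segments'' --- that is a property of the random configuration, not of $\alpha$.

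The paper closes exactly this hole by intersecting each approximant with a weight-measurable event $\mathsf{P}_n$ (no geodesic from $\xi_N$, $y$, or the relevant $z$'s to any $H_\rho(\alpha)$ touches $e_0$ after exiting $\Lambda_n$, together with $\lim_m T(0,\mathbb{Z}^d\setminus\Lambda_m)=\infty$), which has full $\mu$-measure along $n \to \infty$ by the shape theorem and is permitted in a graph-cylinder event since the $\Omega_1$ factor may be an arbitrary Borel set. On $\mathsf{P}_n$, the truncated condition $\Xi_k = E_N$ inside the window really does imply $e_0 \notin \Gamma_z(\alpha)$ globally, and only then does your ``avoiding $f_j$'' argument give that the geodesics, and hence the cylinder conditions, are unchanged under the modification. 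A related, smaller inaccuracy: A2'.1, A2'.2 and A2'.4 are conditions on entire infinite rays (e.g.\ $\Gamma_{\xi_N}$ must \emph{never} re-enter the strip, and the disjointness $\Gamma_z \cap \Gamma_{\xi_N} = \emptyset$ is global), so they are not ``decided by a bounded initial segment''; this is repairable because the symmetric difference of your pinned-cylinder events with the target event tends to zero $\mu$-a.s.\ (the paper does this via the truncation maps $\pi_n$ and the identity \eqref{eq: liminf}), and similarly A2'.3 should be reformulated as a bound on the passage time of an explicit directed path in $\Gamma_y$ rather than on $T(y,v)$ itself, since $T(y,v)$ can increase under the modification. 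But the missing $\mathsf{P}_n$-type control of the geodesics outside the window is the substantive defect: without it the inductive step does not yield $e_0$-approximability.
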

\begin{proof}
Let $e_0, e_1, \dots, e_r$ be an enumeration of the edges in $E_N$. We modify the edges one by one, so we start by showing
\begin{equation}\label{eq: pasta_suprema}
\mu(\mathsf{A}_2', \Xi = E_N, t_{e_0} \geq \lambda) > 0.
\end{equation}
To use Theorem~\ref{thm: general_modification}, we must show that $\mathsf{A}_2' \cap \{\Xi = E_N\}$ is $e_0$-approximable. It will be easier to work with an approximation of $\mathsf{A}_2'$ which does not reference infinitely many $\Gamma_z$'s, so we define, for $k>0$, the event $\mathsf{A}_{2,k}'$ as follows. The conditions of A2'.1-A2'.3 remain unchanged, but A2'.4 is replaced by
\begin{enumerate}
\item[A2'.4$k$] $\Gamma_z \cap \Gamma_{\xi_N} = \emptyset$ for any $z \in \mathbb{Z}^d \cap [-k,k]^d$ which is an endpoint of an edge that contains a point $w \in \mathbb{R}^d$ with
\begin{enumerate}
\item[(a)] $w \in H_{\vartheta}(0)$ and $\|w\|_1 \geq M'$, or
\item[(b)] $w \in H_{\vartheta}(N)$ and $\|w-\xi_N\|_1 \geq M$, or
\item[(c)] $0 \leq w\cdot \vartheta \leq N$ and $w$ has Euclidean distance $\geq M$ from the line through $0$ and $\vartheta$.
\end{enumerate}
\end{enumerate}
We must also replace $\Xi$ by $\Xi_k = \Xi_k(M,N)$, the set of edges $e$ with both endpoints in $S(M,N)$ such that both
\begin{enumerate}
\item $e$ is not in $\Gamma_y$ and
\item $e$ is not in $\Gamma_z$ for any $z$ satisfying any of the conditions A2'.4$k$.
\end{enumerate}
Last, we generalize the definition of $\Gamma_x$ for $x \in \mathbb{Z}^d$ so that it is applicable to arbitrary elements $\eta \in \Omega_3$. To any such $\eta$, we can assign a directed graph $\mathbb{G}$ as before (induced by the directed edges $\langle u,v\rangle$ with $\eta(\langle u,v\rangle)=1$). We define $\Gamma_x$ to be the subgraph of $\mathbb{G}$ induced by the edges $\langle u,v\rangle$ such that $x \to u$ in $\mathbb{G}$. Note that $\mu$-a.s., this agrees with the old definition of $\Gamma_x$ because each $x$ has out-degree one. For this reason and the fact that $\mu$-a.s., directed paths in $\mathbb{G}$ are geodesics, we can replace the third item in the definitions of $\mathsf{A}_2$, $\mathsf{A}_2'$, and $\mathsf{A}_{2,k}'$ with the following: for all vertices $v \in \Gamma_y$ within $\ell^1$-distance $\epsilon\|\xi_N\|_1$ of $\xi_N$ and with $\|v-y\|_1 \geq M'$, there is a directed path $\pi_{y,v}$ in $\Gamma_y$ from $y$ to $v$ such that 
\[
T(\pi_{y,v}) \leq \|v-y\|_1 (S-\delta).
\]

Given these new definitions, we first show that for fixed $N$,
\begin{equation}\label{eq: k_approximation}
\mu\left(\left( \mathsf{A}_2'\cap \{\Xi = E_N\}\right) \Delta \left( \mathsf{A}_{2,k}' \cap \{ \Xi_k = E_N\}\right) \right) \to 0 \text{ as } k \to \infty.
\end{equation}
To do this, we use $\mathsf{A}_{2,k}' \supset \mathsf{A}_2'$ and $\Xi_k \supset \Xi$, and bound the left side by the sum
\begin{align}
&\mu\left( \mathsf{A}_2' \cap \{\Xi = E_N\} \cap \left( \mathsf{A}_{2,k}'\right)^c\right) + \mu\left( \mathsf{A}_2' \cap \{\Xi = E_N\} \cap \mathsf{A}_{2,k}' \cap \{\Xi_k = E_N\}^c\right) \nonumber \\
+~& \mu\left( \mathsf{A}_{2,k}' \cap \{\Xi_k = E_N\} \cap \left( \mathsf{A}_2'\right)^c\right) + \mu\left( \mathsf{A}_{2,k}' \cap \{\Xi_k = E_N\} \cap \mathsf{A}_2' \cap \{\Xi= E_N\}^c\right) \nonumber \\
\leq~& \mu\left( \mathsf{A}_2' \cap \{\Xi = E_N\} \cap \{\Xi_k = E_N\}^c\right) + \mu\left( \mathsf{A}_{2,k}' \setminus \mathsf{A}_2'\right) + \mu\left( \{\Xi_k = E_N\} \cap \mathsf{A}_2' \cap \{\Xi= E_N\}^c\right) \nonumber \\
\leq~& \mu\left( \mathsf{A}_{2,k}' \setminus \mathsf{A}_2'\right) + 2 \mu\left( \mathsf{A}_2' \cap \{\Xi_k \setminus \Xi \neq \emptyset\}\right). \label{eq: k_approximation_1}
\end{align}
The first term of \eqref{eq: k_approximation_1} converges to zero since $\mathsf{A}_2' = \cap_k \mathsf{A}_{2,k}'$. For the second term, note that $\Xi_1 \supset \Xi_2 \supset \dots \supset \Xi$ and $\cap_k \Xi_k = \Xi$, but since they are all subsets of a finite set, there must be a (random) $k_0$ such that $\Xi_k = \Xi$ for all $k \geq k_0$. This implies the second term converges to zero, and establishes \eqref{eq: k_approximation}.

Because of \eqref{eq: k_approximation}, it suffices to show that for any fixed $k$, the event $\mathsf{A}_{2,k}' \cap \{\Xi_k = E_N\}$ is $e_0$-approximable. For this we define the operator $\pi_n$ on $\Omega_1 \times \Omega_3$ by
\[
\pi_n((t_e),\eta) = ((t_e),\eta_{n,0}),
\]
where $\eta_{n,0}$ refers to the configuration which agrees with $\eta$ on all directed edges with both endpoints in the box $\Lambda_n = [-n,n]^d$ and which is equal to 0 for all other directed edges. Note that the graph $\mathbb{G}$ in $\pi_n((t_e),\eta)$ is a subgraph of the graph for $((t_e),\eta)$. (It is obtained by removing all directed edges outside $\Lambda_n$.) Then, as usual, we set $\pi_n^{-1}(A) = \{((t_e),\eta) : \pi_n((t_e),\eta) \in A\}$ for any measurable $A \subset \Omega_1 \times \Omega_3$. We will use these operators to construct graph-cylinder approximations. To this end, we prove that
\begin{equation}\label{eq: liminf}
\mathsf{A}_{2,k}' \cap \{\Xi_k = E_N\} = \bigcup_{m = 1}^\infty \bigcap_{n \geq m} \pi_n^{-1}\left( \mathsf{A}_{2,k}' \cap \{\Xi_k = E_N\}\right).
\end{equation}
We begin with the inclusion $\subset$. Suppose that $\mathsf{A}_{2,k}' \cap \{\Xi_k = E_N\}$ contains some $((t_e),\eta)$; we must show that for all large $n$, $\pi_n^{-1}\left( \mathsf{A}_{2,k}' \cap \{\Xi_k = E_N\}\right)$ contains $((t_e),\eta)$. Condition A2'.1 holds for $((t_e),\eta_{n,0})$ by the subgraph property. Similarly, in condition A2'.2, $\Gamma_y \cap \Gamma_{\xi_N} = \emptyset$ in $((t_e),\eta_{n,0})$ for each $n$. Since $\Gamma_y$ contains a vertex within $\ell^1$-distance $\epsilon \|\xi_N\|_1$ of $\xi_N$ in $((t_e),\eta)$, this is also true for $((t_e),\eta_{n,0})$ if $n$ is large enough. For A2'.3, since there exist paths $\pi_{v,y}$ in $\mathbb{G}$ for the configuration $((t_e),\eta)$ satisfying the bound $T(\pi_{v,y}) \leq \|v-y\|_1(S-\delta)$, and the collection of such $v$ is finite, this will also be true in $((t_e),\eta_{n,0})$ for large $n$. Next, A2'.4$k$ holds in $((t_e),\eta)$, it also holds in $((t_e),\eta_{n,0})$ by the subgraph property. 

We are left to show that $\Xi_k = E_N$ in $((t_e),\eta_{n,0})$, so long as $n$ is large. We will do this by showing that for all large $n$, $\Xi_k$ in $((t_e),\eta)$ equals $\Xi_k$ in $((t_e),\eta_{n,0})$. If a directed edge $\langle u,v\rangle$ with $u,v \in S(M,N)$ is not in $\Xi_k$ in $((t_e),\eta)$, then there exists a directed path in some $\Gamma_z$ from items A2'.4$k$(a-c) from $z$ to $u$. This path will also be in $\Gamma_z$ in $((t_e),\eta_{n,0})$ so long as $n$ is sufficiently large, so $\langle u,v \rangle \notin \Xi_k$ in $((t_e),\eta_{n,0})$ for $n$ large enough. Conversely, if $\langle u,v \rangle \notin \Xi_k$ in $((t_e),\eta_{n,0})$ for any given $n$, it is not in $\Xi_k$ in $((t_e),\eta)$ by the subgraph property.

For the inclusion $\supset$, the argument is similar. If A2'.1-4 hold in $((t_e),\eta_{n,0})$ for all large $n$, it is straightforward to check that they hold in $((t_e),\eta)$. Furthermore, the arguments of the last paragraph show that if $\Xi_k = E_N$ in $((t_e),\eta_{n,0})$ for all large $n$, then $\Xi_k = E_N$ in $((t_e),\eta)$. This completes the proof of \eqref{eq: liminf}.


Having established the equality in \eqref{eq: liminf}, we unfortunately need one last condition to construct graph-cylinder approximations. For any $n$, let $\mathsf{P}_n$ be the event
\[
\mathsf{P}_n = \left\{ \begin{array}{c}
\text{no geodesic from } z \text{ to } H_{\rho}(\alpha) \text{ touches }e_0 \text{ after exiting } \Lambda_n \\
 \text{ for all } \alpha \in \mathbb{R} \text{ and }z = \xi_N,y, \text{ or } z \text{ satisfying any of the conditions A2'.4}k \end{array} \right\} \cap \mathsf{Q},
\]
where $\mathsf{Q}$ is the event that $\lim_{m \to \infty} T(0,\mathbb{Z}^d \setminus \Lambda_m) = \infty$. 
Note that $\mathsf{P}_n$ is measurable relative to the $\Omega_1$ coordinate, $(t_e)$, and on this event, there exists a geodesic between any two points (and between any point and any hyperplane). Furthermore, by the shape theorem \eqref{eq: shape_theorem}, one has $\mu\left( \cup_{m=1}^\infty \cap_{n \geq m} \mathsf{P}_n\right) = 1.$ Combining this with \eqref{eq: liminf}, we obtain
\begin{equation}\label{eq: liminf_2}
\mathsf{A}_{2,k}' \cap \{\Xi_k = E_N\} = \bigcup_{m = 1}^\infty \bigcap_{n \geq m}\left( \mathsf{P}_n \cap \pi_n^{-1}\left( \mathsf{A}_{2,k}' \cap \{\Xi_k = E_N\}\right)\right) ~\mu\text{-a.s.,}
\end{equation}
meaning that the symmetric difference of the left and right sides has $\mu$-measure zero. We use the right side to construct graph cylinder approximations. Write $B_n = \mathsf{P}_n ~\cap~ \pi_n^{-1}\left( \mathsf{A}_{2,k}' \cap \{\Xi_k = E_N\}\right)$, so that the right side of \eqref{eq: liminf_2} is $\cup_{m=1}^\infty \cap_{n \geq m} B_n$. From countable additivity, given $\varepsilon>0$, we can choose $m_0,m_1$ such that $m_0 \leq m_1$ and
\begin{equation}\label{eq: cylinder_construction}
\mu\left( \left( \mathsf{A}_{2,k}' \cap \{\Xi_k = E_N\}\right) \Delta \left( \bigcap_{n=m_0}^{m_1} B_n \right) \right) < \varepsilon.
\end{equation}
The finite intersection above is a graph-cylinder event, so we must show that it satisfies \eqref{eq: raise_it_up} in the definition of $e_0$-approximable. For this, it is enough to show that for a fixed $n$, there is $\alpha_0$ such that if $\alpha \geq \alpha_0$, then
\begin{equation}\label{eq: ramrod}
((t_e), \eta_\alpha((t_e))) \in B_n \text{ implies } ((t_e'), \eta_\alpha((t_e'))) \in B_n
\end{equation}
whenever $(t_e')$ is as in the definition of $e_0$-approximable.

We will show \eqref{eq: ramrod} for all $\alpha$, and to do this, we first define $\Gamma_x(\alpha) = \Gamma_x(\alpha)((t_e))$ to be the directed graph $\Gamma_x$ in the configuration $((t_e),\eta_\alpha((t_e)))$ and $\Gamma_{x,n}(\alpha)$ to be the directed graph $\Gamma_x$ in the configuration $\pi_n((t_e),\eta_\alpha((t_e)))$. The latter graph is the one obtained from stopping any directed path in $\Gamma_x(\alpha)$ once it first intersects $\Lambda_n^c$. We note here that 
\begin{equation}\label{eq: geo_truncation_1}
\begin{array}{c}
\text{if }((t_e),\eta_\alpha((t_e))) \in B_n, \text{ then } e_0 \notin \Gamma_z(\alpha) \\
\text{ for } z=\xi_N,y,\text{ or } z \text{ satisfying any of the conditions A2'.4}k.
\end{array}
\end{equation}
Indeed, supposing that $((t_e),\eta_\alpha((t_e))) \in B_n$, then by definition of $\Xi_k$, we have $e_0 \notin \Gamma_{z,n}(\alpha)$ for $z = \xi_N,y$ or $z$ satisfying any of the conditions A2'.4$k$. However if any such $z$ had $e_0 \in \Gamma_z(\alpha)$, this would mean that some geodesic from $z$ to $H_{\rho}(\alpha)$ exits $\Lambda_n$ and then returns to an endpoint of $e_0$. This would mean $(t_e) \notin \mathsf{P}_n$, a contradiction.

Now, for \eqref{eq: ramrod}, suppose that $((t_e), \eta_\alpha((t_e))) \in B_n$. To show that $((t_e'), \eta_\alpha((t_e'))) \in B_n$, first observe that increasing $t_{e_0}$ to $t'_{e_0}$ does not change any $\Gamma_z(\alpha)$ for $z$ listed in \eqref{eq: geo_truncation_1}. Using this, we move to condition A2'.1$k$. Because $\Gamma_{\xi_N}(\alpha)$ is the same in $(t_e)$ and $(t_e')$, since $\Gamma_{\xi_N,n}(\alpha)((t_e))$ obeys A2'.1$k$, so does $\Gamma_{\xi_N,n}(\alpha)((t_e'))$. Similar reasoning applied to $y$ and for $z$ listed in A2'.4$k$ shows that this and A2'.2$k$ hold for $((t_e'),\eta_\alpha((t_e')))$. For A2'.3$k$, we also use that $\Gamma_y(\alpha)$ does not change from $(t_e)$ to $(t_e')$, along with the fact that no edges in any $\pi_{y,v}$ can contain $e_0$ (since they are contained in $\Gamma_{y,n}(\alpha)$, which is a subgraph of $\Gamma_y(\alpha)$, and this latter graph does not contain $e_0$ by \eqref{eq: geo_truncation_1}). We conclude that A2'.3$k$ holds for $((t_e'),\eta_\alpha((t_e')))$.

Last we must prove that $\mathsf{P}_n \cap \{\Xi_k = E_N\}$ holds for $((t_e'),\eta_\alpha((t_e')))$. As noted before, because of \eqref{eq: geo_truncation_1}, all geodesics from any $z$ listed there to $H_{\rho}(\alpha)$ remain the same in the configuration $(t_e')$. Because none of them contained $e_0$ in $(t_e)$, they still do not in $(t_e')$ and so $\mathsf{P}_n$ holds trivially in $((t_e'), \eta_\alpha((t_e')))$. The second, the event $\mathsf{Q}$, is increasing in the edge-weights, so since it holds for $(t_e)$, it also holds for $(t_e')$. We are left to show that $\Xi_k = E_N$ in $((t_e'), \eta_\alpha((t_e')))$. However, $\Xi_k$ in $((t_e),\eta_\alpha((t_e)))$ is the set of edges with both endpoints in $S(M,N)$ that are not in $\Gamma_{y,n}(\alpha)$ or $\Gamma_{z,n}(\alpha)$ for any $z$ satisfying any of the conditions A2'.4$k$ in the configuration $(t_e)$. As we saw, these graphs do not change when we increase $t_{e_0}$ to $t_{e_0}'$, so this set remains the same in $((t_e'),\eta_\alpha((t_e')))$. This completes the proof of \eqref{eq: ramrod}, and shows that \eqref{eq: pasta_suprema} holds.

To complete the proof of Corollary~\ref{cor: our_events_modify}, we extend \eqref{eq: pasta_suprema} to all edges in $E_N$ by induction. for $s = 0, \dots, r$, let S($s$) be the statement:
\begin{itemize}
\item[S($s$):]  $\mu(\mathsf{A}_2', \Xi = E_N, t_{e_p} \geq \lambda \text{ for } p = 0, \dots, s) > 0$ for infinitely many $N$ and $M=M(N)$.
\end{itemize}
We have already shown that S(0) is true. Assuming that S($s$) is true for some $s= 0, \dots, r-1$, we show that S($s+1$) is true. Running the argument leading to \eqref{eq: cylinder_construction} with $e_{s+1}$ in place of $e_0$, we obtain $(A_n)$, a sequence of graph-cylinder events such that
\[
\mu\left( \left( \mathsf{A}_2' \cap \{\Xi = E_N\}\right) \Delta A_n \right) \to 0,
\]
and
\[
((t_e),\eta_\alpha((t_e))) \in A_n \text{ implies } ((t_e'), \eta_\alpha((t_e'))) \in A_n
\]
whenever $(t_e')$ agrees with $(t_e)$ off $e_{s+1}$ and satisfies $t_{e_{s+1}}' \geq t_{e_{s+1}}$. Set
\[
A_n' = A_n \cap \{t_{e_p} \geq \lambda \text{ for } p = 0, \dots, s\}.
\]
Then
\[
\mu\left( \left( \mathsf{A}_2' \cap \{\Xi = E_N\} \cap \{t_{e_p} \geq \lambda \text{ for } p = 0, \dots, s\}\right) \Delta A_n' \right) \to 0.
\]
By Theorem~\ref{thm: general_modification}, we must show only that for $(t_e),(t_e')$ as above (they agree off $e_{s+1}$ but $t_{e_{s+1}}' \geq t_{e_{s+1}}$), one has
\[
((t_e),\eta_\alpha((t_e))) \in A_n' \text{ implies } ((t_e'), \eta_\alpha((t_e'))) \in A_n'.
\]
But this follows immediately: since $((t_e), \eta_\alpha((t_e))) \in A_n$, so is $((t_e'),\eta_\alpha((t_e')))$, and the weights $t_{e_p}$ for $p=0, \dots, s$ do not change from $(t_e)$ to $(t_e')$, so we obtain $((t_e'),\eta_\alpha((t_e'))) \in A_n'$. We find that the event $\mathsf{A}_2' \cap \{\Xi = E_N\} \cap \{t_{e_p} \geq \lambda \text{ for } p = 0, \dots, s\}$ is $e_{s+1}$-approximable, and since it has positive probability by S($s$), we can apply Theorem~\ref{thm: general_modification} to conclude that S($s+1$) is true. This completes the proof of Corollary~\ref{cor: our_events_modify}.


\end{proof}

\bigskip
\noindent
{\bf Acknowledgements.} The research of M.~D. is supported by an NSF CAREER grant. The research of J.~H. is supported by NSF grant DMS-161292, and a PSC-CUNY Award, jointly funded by The Professional Staff Congress and The City University of New York.

\end{document}